\documentclass[preprint, 12pt]{article}
\label{key}
\usepackage{calc}
\usepackage[all]{xy}
\usepackage[centertags]{amsmath}
\usepackage{latexsym}
\usepackage{amsfonts}
\usepackage{amssymb}
\usepackage{amsthm}
\usepackage{geometry}
\usepackage{fancyhdr}
\usepackage[dvips]{epsfig}
\usepackage{newlfont}
\usepackage{fancyhdr}
\usepackage{color}
\usepackage{blindtext}
\usepackage{graphics}
\usepackage{newlfont}
\usepackage{wrapfig}
\usepackage{floatflt}
\usepackage{floatflt}
\usepackage{pifont}
\usepackage{hyperref}
\usepackage[english]{babel}
\usepackage[latin1]{inputenc}
\usepackage{graphicx}
\usepackage{t1enc}
\usepackage{pifont}
\usepackage{makeidx}
\usepackage{pgf,tikz,pgfplots}
\pgfplotsset{compat=1.13}
\usepackage{mathrsfs}
\usetikzlibrary{arrows}
\usepackage{tikz-cd}
\usepackage{enumitem}
\usepackage{kantlipsum}
\usepackage{authblk}
\usetikzlibrary{arrows}
\setcounter{MaxMatrixCols}{10}


\addtolength{\hoffset}{-1.4cm} \addtolength{\textwidth}{2cm}
\addtolength{\voffset}{-2.1cm} \addtolength{\textheight}{2cm}
\geometry{ a4paper,paperwidth=210mm, paperheight=297mm,
	body={200mm,270mm}, left=40mm,top=40mm, footskip=15mm,
	headheight=10mm,headsep=5mm, marginparsep=2mm,marginparwidth=20mm}
\textheight=25cm \textwidth=16cm \frenchspacing \linespread{1.1}

\addtolength{\headheight}{0.5pt}

\fancypagestyle{plain}{ }

\newlength{\defbaselineskip}
\setlength{\defbaselineskip}{\baselineskip}

\newtheorem{definition}{Definition}
\newtheorem{proposition}[definition]{Proposition}
\newtheorem{remark}[definition]{Remark}
\newtheorem*{Key words}{Key words}
\newtheorem{notation}{Notation}

\newtheorem*{Proof of Theorem Equivalence between realization functors}{Proof of Theorem \ref{Equivalence_between_realization_functors}}
\newtheorem{Proof of proposition Lif adjunction symmetric sequences}{Proof of proposition \ref{lift_adjonction_symmetric_sequence}}
\newtheorem*{Proof of Theorem Cofree coalgebras}{Proof of Theorem \ref{Cofree Cocommutative_Coalgebra}}
\newtheorem*{Proof of main theorem} {Proof of theorem \ref{Quillen_equivalence_operads}} 
\newtheorem{theorem}[definition]{Theorem}
\newtheorem*{Proof of Proposition CoFree Cosimplicial  Cocommutative Coalgebra} {Proof of Proposition \ref{coFree_Cocomutative_Coalgebra}}

\newtheorem{mylemma}[definition]{Lemma}
\newtheorem*{Abstract}{Abstract}
\newtheorem*{Theorem A}{Theorem A}
\newtheorem*{Theorem B}{Theorem B}
\newtheorem*{Proposition A-1}{Proposition A-1}
\newtheorem*{Acknowledgements}{Acknowledgements}
\newtheorem*{Corollary B-1}{Corollary B-1}

\newsavebox{\bparcould}\newlength{\lparcould}

 \theoremstyle{plain} \linespread{1} \makeindex
\makeatletter
\newcommand{\thechapterwords}
{ \ifcase \thechapter\or Premier\or Deux\or Trois\or Quatre\or
	Cinq\or
	Six\or Sept\or Huit\or Neuf\or Dix\or Onze\fi}
\def\thickhrulefill{\leavevmode \leaders \hrule height 1ex \hfill \kern \z@}
\def\@makechapterhead#1{    \vspace*{15\p@}  {\parindent \z@ \centering \reset@font
		\thickhrulefill\quad
		\scshape \@chapapp{} \thechapterwords
		\quad \thickhrulefill
		\par\nobreak
		\vspace*{15\p@}        \interlinepenalty\@M
		\hrule
		\vspace*{15\p@}        \large \bfseries #1\par\nobreak
		\par
		\vspace*{15\p@}        \hrule
		\vskip 60\p@
	}}
	\def\@makeschapterhead#1{    \vspace*{15\p@}  {\parindent \z@ \centering \reset@font
			\thickhrulefill
			\par\nobreak
			\vspace*{15\p@}        \interlinepenalty\@M
			\hrule
			\vspace*{15\p@}        \large \bfseries #1\par\nobreak
			\par
			\vspace*{15\p@}        \hrule
			\vskip 60\p@
		}}
		\def\@makechapterhead#1{    \vspace*{15\p@}  {\parindent \z@ \centering \reset@font
				\thickhrulefill\quad
				\scshape \@chapapp{} \thechapterwords
				\quad \thickhrulefill
				\par\nobreak
				\vspace*{15\p@}        \interlinepenalty\@M
				\hrule
				\vspace*{15\p@}        \large \bfseries #1\par\nobreak
				\par
				\vspace*{15\p@}        \hrule
				\vskip 60\p@
			}}

			\begin{document}

				\bigskip
				
				\begin{center}
					
					\textbf{\Large{Equivalences of operads over symmetric monoidal categories}}\\
					
					\bigskip
					
					\title{Equivalences of operads over symmetric monoidal categories}
					\bigskip
					MIRADAIN ATONTSA NGUEMO 
					\bigskip
					
					Universit\'{e} Catholique de Louvain, IRMP, Louvain La Neuve, 1348, Belgium

					\bigskip
					Email: miradain.atontsa@uclouvain.be
					\bigskip
				
					\bigskip
					\bigskip
				\end{center}
				\begin{Abstract}In this paper, we study conditions for extending Quillen model category properties , between two symmetric monoidal categories,  to their associated category of symmetric sequences and of operads.
					Given a Quillen equivalence $\lambda: \mathcal{C}=Ch_{\geq t}(\Bbbk) \rightleftarrows \mathcal{D}: R,$ so that  $\mathcal{D}$ is any symmetric monoidal category and the adjoint pair $(\lambda, R)$ is weak monoidal, we prove that the categories of connected operads $Op_\mathcal{C}$ and $Op_\mathcal{D}$ are Quillen equivalent. 
					
				\end{Abstract}
					AMS Classification numbers.  Primary: 18D50, 55U35  ;  Secondary:  55U15, 18D10
				\begin{Key words}
					Model category, coalgebra, cosimplicial frame, operad.
				\end{Key words}
				\section*{Introduction}
				This paper is inspired by the work of Schwede-Shipley who studied conditions for extending a Quillen equivalence  $\lambda: \mathcal{C} \rightleftarrows \mathcal{D}: R$ between two symmetric monoidal model categories to a Quillen equivalences on their associated sub-categories of monoid (see  in \cite{SS03}). Though ,  under good assumptions, an operad over a category  is a monoid in the associated category of symmetric sequences with the circle product,  the study of Schwede-Shipley still does not apply for operads since the circle product is not symmetric and does not distribute over colimits. 
				
				Muro extended this in \cite[Thm 1.1]{Muro} to show that, under favorable situations, a Quillen equivalence also induces a Quillen equivalence on the associated categories of non-symmetric operads. The goal of this paper is to provide an analogous result for symmetric operads in the particular case where $\mathcal{C}=Ch_{\geq t}(\Bbbk),$ $t\in \mathbb{N}\cup\{-\infty\},$ is the category of $t$-below truncated chain complexes.
				
	 Our method consists of constructing fundamental relations  between the adjoint pair $\lambda: \mathcal{C} \rightleftarrows \mathcal{D}: R$ and the corresponding pair  at the level of operads: $L: Op_{\mathcal{C}} \rightleftarrows Op_{\mathcal{D}}: \overline{R}.$

				We give, using cofree cocommutative coalgebras, an explicit  construction  for cosimplicial frames of operads. We use this construction to prove that  the realization of simplicial operads defines a functor which is weakly equivalent to the realization in each arity in  the underlying category $\mathcal{C}.$
		 This is expressed in  the following theorem (see Thm \ref{Equivalence_between_realization_functors}):
				\begin{Theorem A}
					Let $P_\bullet$ be a simplicial operad on chain complexes $\mathcal{C}=Ch_{\geq t}(\Bbbk),$ $t\in \mathbb{N}\cup\{-\infty\}.$   Then for any integer $r\geq 0, $ there is a zig-zag of quasi-isomorphisms
					\begin{center}
						$\arrowvert P_\bullet(\underline{r})\arrowvert_\mathcal{C} \overset{\simeq}{\longleftarrow} \bullet \overset{\simeq}{\longrightarrow}  \arrowvert P_\bullet\arrowvert_{ Op_\mathcal{C}}(\underline{r})$
					\end{center}
				\end{Theorem A}
				Every weak monoidal Quillen pair $\lambda:\mathcal{C} \rightleftarrows \mathcal{D}: R$ lifts, under good hypothesis, to a weak monoidal Quillen pair 
				\begin{equation}\label{Equation Quillen equivalence symmetric sequence}
				\overline{\lambda}: [FinSet,\mathcal{C}] \rightleftarrows [FinSet, \mathcal{D}]: \overline{R}
				\end{equation} between their associated categories of symmetric sequences, and the equivalence $(\ref{Equation Quillen equivalence symmetric sequence})$ lifts to an adjoint pair $L: Op_{\mathcal{C}} \rightleftarrows Op_{\mathcal{D}}: \overline{R}.$
				
				Now given any cofibrant operad, we consider its simplicial resolution to prove  the next proposition (see Prop \ref{Relation_betwen_left_adjoint_functors}) which gives a relation between the two  functors $\overline{\lambda}$ and $L.$
				\begin{Proposition A-1}
					Let $\lambda: \mathcal{C}=Ch_{\geq t}(\Bbbk) \rightleftarrows \mathcal{D}: R$ be a weak monoidal Quillen pair between the category $(Ch_{\geq t}(\Bbbk), \otimes, \Bbbk),$  $t\in \mathbb{N}\cup\{-\infty\},$  and any other symmetric monoidal category $(\mathcal{D}, \wedge, \mathbb{I}_\mathcal{D}).$
					If $P$ is a cofibrant operad in $Ch_{\geq t}(\Bbbk),$ then the morphism $\overline{\lambda}(U(P))\longrightarrow UL(P)$
					, which is adjoint to the  unit $\eta: P \longrightarrow \overline{R}L(P),$ 
					is a weak equivalence.	
				\end{Proposition A-1}
				This later proposition is the key ingredient to prove the main result of this paper (see  Theorem \ref{Quillen_equivalence_operads}).
				
				\begin{Theorem B} Let $(\mathcal{D}, \wedge, \mathbb{I}_\mathcal{D})$ be a cofibrantly generated symmetric monoidal model category with cofibrant unit $\mathbb{I}_\mathcal{D},$ and let  
					 $\lambda: \mathcal{C}=Ch_{\geq t}(\Bbbk)\rightleftarrows \mathcal{D}: R$ be a weak monoidal Quillen pair.  If the pair $(\lambda, R)$ is a Quillen equivalence, then the pair $L: Op_\mathcal{C} \rightleftarrows Op_\mathcal{D}: \overline{R}$ is a Quillen equivalence between the semi-model categories  $Op_\mathcal{C}$ and $Op_\mathcal{D}.$
					 
					 In addition, if the semi-model structure on $Op_\mathcal{D}$ is a strict model structure, then the pair $(L,\overline{R})$ is a strict Quillen equivalence.
				\end{Theorem B}
				
			Many other authors have tried to extend Quillen equivalences between two monoidal model categories to their associated category of symmetric operads in the sense of  Theorem B. For instance, in \cite[Thm 5.8]{hackney2017shrinkability} and \cite[$§$5.5]{white2016homotopical} the authors use  in their approach "pasting schemes" and "graphs". In these two cases, the authors consider  that the  categories  $\mathcal{C} $ and $\mathcal{D}$ are closed symmetric monoidal, with some additional conditions on the category $\mathcal{D}.$ In our constructions, the category $\mathcal{D}$  does not have a particular restriction.
			
			Our Theorem B is a particular case of \cite[Thm 8.10]{Pavlov_Jakob} who addresses colored operads and with slightly the same hypothesis. Our approach using commutative coalgebras is new and the tools we develop can be used in many other places.


				\subsection*{Outline of the paper}
				In section 1,  we remind the background on chain complexes, weak monoidal adjoint pair and symmetric sequences. We give a general construction for cofree cocommutative coalgebras associated to chain complexes. We also prove that the cofree coalgebra of an acyclic chain complex is trivial.
				In section 2, we study the extension of weak monoidal Quillen pair and Quillen equivalence, between two symmetric monoidal model categories,  to the associated category of symmetric sequences.  In section 3, we define the cosimplicial frame for the realization functor of simplicial operads, and we prove theorem A. We end this section by giving a simplicial resolution of any arbitrary operad of chain complexes.
				Section 4 is dedicated to the proof of theorem B.	
				\begin{Acknowledgements}
					The research in this paper is supported by the OpLoTo grant and the Hubert Curien grant.
					The author is indebted to Benoit Fresse for numerous helpful discussions, suggestions and corrections during this work.
				\end{Acknowledgements}

				\section{preliminaries}
				\subsection{Chain complexes}
				Let $\Bbbk$ be a field. In this note, we denote by $Ch_{}(\Bbbk)$ the category of  differential $\mathbb{Z}$- graded  chain complexes over $\Bbbk.$  This category has a symmetric monoidal structure. The tensor product of chain complexes  $\forall V,W \in Ch(\Bbbk),$ is given by
				\begin{center}
					 $(V\otimes W)_n:= \underset{p+q=n}{\oplus} V_p \otimes W_q$
				\end{center}
				with the differential such that: $\forall x\otimes y \in V_p \otimes W_q, $ $d(x\otimes y)= d(x)\otimes y + (-1)^p x\otimes d(y).$
				The switch morphism $T:V\otimes W \longrightarrow W\otimes V$ involves the Koszul sign: $T(x\otimes y)=(-1)^{pq}y\otimes x.$
				
				The unit of the monoid $-\otimes-,$ which we  denote abusively  $\Bbbk,$ is the chain complex having $\Bbbk$ in degree $0$ and is trivial in all other degrees.
				
				We denote by $Ch_{\geq t}(\Bbbk) ,$ the sub-category of  $Ch_\Bbbk $ which consist of $t$-below truncated chain complexes  with $t\in \mathbb{N}\cup\{-\infty\},$ where  $Ch_{\geq -\infty}(\Bbbk)=Ch(\Bbbk).$ The induced tensor product $-\otimes -$ endows the category $Ch_{\geq t}(\Bbbk)$ with a monoidal structure. 
				
					\subsubsection*{Model category structure on $Ch_{\geq t}(\Bbbk), t \in \mathbb{N}$}
					The category $Ch_{\geq t}(\Bbbk)$ is a cofibrantly generated model category (for instance see   ( \cite[II p. 4.11, Remark 5 ]{Quill67}, \cite[ Thm 7.2 ]{Dwyer95homotopytheories}): 
					\begin{enumerate}
						\item [-]weak equivalences are quasi-isomorphisms;
						\item[-] fibrations are the morphisms that are surjective in degree $>t;$  
						\item[-] cofibrations are monomorphism with degreewise projective cokernels.
					\end{enumerate}
					Since  $\Bbbk$ is a field, all objects are cofibrant and fibrant in this model category.
					
					\subsubsection*{Model category structure on $Ch(\Bbbk)$}
					The category $Ch(\Bbbk)$ is a cofibrantly generated model category (for instance see \cite[remark after Thm. 9.3.1]{Hovey_Palmieri}): 
					\begin{enumerate}
						\item [-]weak equivalences are quasi-isomorphisms;
						\item[-] fibrations are surjections and 
						\item[-] cofibrations are morphisms having the left lifting property with respect to trivial fibrations.
					\end{enumerate}
					In this model category, the cofibrations, that we do not describe explicitly, are in particular degreewise split injections.

			\subsection{Coalgebras on chain complexes}	
				
						\begin{definition}[Coalgebra]
							A coalgebra on $Ch_{\geq 0}(\Bbbk) $ is a cotriple $(C, \triangle, \varepsilon)$ with $C$ an object in $Ch_{\geq 0}(\Bbbk) ,$ $\triangle: C\longrightarrow C\otimes C$ a map called (diagonal or) comultiplication and $\varepsilon: C \longrightarrow \Bbbk$ a map called the augmentation or counit, which satisfies the coassociativity and counitary conditions (see \cite[Section 1]{Sweedler1969hopf}). The category of coalgebra is denoted $\mathcal{C A}.$
							
							A coalgebra  $(C, \triangle, \varepsilon)$ is cocommutative when the comultiplication $\triangle$ is co-symmetric. The category of cocommutative coalgebras is denoted $CDGC.$
							\end{definition}
				
						 There is a fundamental theorem of coalgebras due to Getzler-Goerss:
						 \begin{theorem} \cite[Corollary 1.6]{GetzlerGoerss}\label{Universal property of coalgebras}
						 	Every coalgebra in  $Ch_{\geq 0}(\Bbbk)$ is a filtered colimit of its finite dimensional sub-coalgebras.
						 \end{theorem}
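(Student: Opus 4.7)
The plan is to reduce the statement to the local claim that every homogeneous element $c \in C$ lies in some finite dimensional dg sub-coalgebra of $C$. Granted this, the poset of finite dimensional dg sub-coalgebras of $C$ is filtered under inclusion, since the sub-coalgebra generated by finitely many elements is again finite dimensional (apply the local claim to each generator and take sums), and its colimit in the category of coalgebras recovers $C$ because every homogeneous element belongs to one such sub-coalgebra.

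For the local claim I would adapt Sweedler's classical fundamental theorem of coalgebras to the dg setting. Given a homogeneous $c$, write $\triangle(c) = \sum_{i=1}^{n} x_i \otimes y_i$ with $\{x_i\}$ and $\{y_i\}$ linearly independent families of homogeneous elements, which is possible over the field $\Bbbk$. Using coassociativity $(\triangle \otimes 1)\triangle(c) = (1 \otimes \triangle)\triangle(c)$ and separately projecting against the linearly independent families on each side, one deduces that $\triangle(x_j) = \sum_k x_k \otimes a_{kj}$ and $\triangle(y_i) = \sum_k a_{ik} \otimes y_k$ for homogeneous elements $a_{ij} \in C$, where the same elements $a_{ij}$ appear in both formulas. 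Applying the same coassociativity/independence argument now to $x_j$ yields $\triangle(a_{ij}) = \sum_k a_{ik} \otimes a_{kj}$. Consequently the finite dimensional graded subspace $D := \operatorname{span}\{c, x_i, y_i, a_{ij}\}$ satisfies $\triangle(D) \subseteq D \otimes D$, and is a graded sub-coalgebra containing $c$.

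To promote $D$ to a dg sub-coalgebra, I would enlarge it to $D' := D + d(D)$. This remains finite dimensional since $\dim d(D) \leq \dim D$, and it is $d$-stable because $d^2 = 0$. The Leibniz-type compatibility $\triangle \circ d = (d \otimes 1 + 1 \otimes d) \circ \triangle$ ensures $\triangle(D') \subseteq D' \otimes D'$: for $x \in D$ one has $\triangle(d(x)) = (d \otimes 1 + 1 \otimes d)\triangle(x) \in d(D) \otimes D + D \otimes d(D) \subseteq D' \otimes D'$. Thus $D'$ is the finite dimensional dg sub-coalgebra containing $c$ required by the local claim.

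The main obstacle I anticipate is the Sweedler bookkeeping in the second paragraph: one must verify that the iteration of the coassociativity/linear-independence argument truly closes after finitely many steps, which rests on the minimality of the initial decomposition of $\triangle(c)$ and on the field hypothesis on $\Bbbk$ (used to extract dual functionals from each linearly independent family). The subsequent promotion from a graded sub-coalgebra to a dg one is by comparison a clean formal consequence of $d^2 = 0$ and the compatibility of $d$ with $\triangle$, so the bulk of the real work lies in the classical coalgebra closure argument.
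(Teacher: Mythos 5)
Your proposal is correct, but note that the paper itself offers no proof of this statement: it is quoted directly from Getzler--Goerss \cite[Corollary 1.6]{GetzlerGoerss}, so what you have written is a self-contained proof of the cited result rather than an alternative to an internal argument. Your route is Sweedler's classical fundamental theorem of coalgebras adapted to the dg setting, and the bookkeeping you flag as the main risk does close after finitely many steps: taking $\triangle(c)=\sum_i x_i\otimes y_i$ of minimal length bidegree by bidegree yields homogeneous, simultaneously linearly independent families (elements coming from distinct bidegrees have distinct degrees, so independence survives globally); pairing coassociativity against dual functionals gives $\triangle(x_j)=\sum_i x_i\otimes a_{ij}$ and $\triangle(y_i)=\sum_j a_{ij}\otimes y_j$ with the same matrix $(a_{ij})$, and one further application gives $\triangle(a_{kj})=\sum_i a_{ki}\otimes a_{ij}$, so the span $D=\operatorname{span}\{c,x_i,y_j,a_{ij}\}$ is already $\triangle$-stable and no infinite iteration occurs. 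The passage to $D'=D+d(D)$ is exactly the right dg upgrade (the counit condition on a subspace is automatic), and the final globalization works because sums of finite dimensional dg sub-coalgebras are again finite dimensional dg sub-coalgebras, making the poset filtered, while the forgetful functor to chain complexes creates the colimit, which is then the exhaustive union $C$. Compared with the cited source, which establishes local finiteness by a comodule-theoretic version of the same Sweedler mechanism, your argument is more elementary and entirely explicit; what it costs is only the careful minimal-rank decomposition, which you have correctly identified and which does go through over a field.
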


				\subsection{Cofree cocommutative coalgebra generated by a chain complex}\label{Cofree_section}

		 In this section we construct the cofree cocommutative differential non-negatively graded coalgebra generated by a chain complex $V \in Ch_{\geq 0}(\Bbbk).$ We prove that it sends acyclic chain complexes to contractible commutative differential graded coalgebra( CDGC). More precisely we will prove  that 
		 \begin{enumerate}
		 	\item [(i)] the forgetful functor $U: CDGC \longrightarrow Ch_{\geq 0}(\Bbbk)$ admits a right adjoint $C(-);$
		 	\item [(ii)] if $V \in Ch_{\geq 0}(\Bbbk)$ is such that $H(V)=0$ then the natural map $C(V) \longrightarrow \Bbbk$ is  a quasi-isomorphism.
		 \end{enumerate}
		 The dual version of this result is classical \cite[Chap 3: Example 6]{FHT01}, namely we have the adjunction 
		 \begin{equation}\label{Equation Adjunction between CDGA and Ch}
		 \Lambda: Ch_{\leq 0} (\Bbbk)   \rightleftarrows   CDGA: U
		 \end{equation}
		 between the category of non-positively graded chain complexes and the category of commutative  non-positively differential graded  algebras(CDGA) over $\Bbbk,$
		 where $\Lambda$ is the graded  commutative algebra functor .

		 A way to prove $(i)$ would be to take the linear dual of the adjunction (\ref{Equation Adjunction between CDGA and Ch}) but the problem here is that the linear dual of a  CDGA is a CDGC only when the algebra is of finite dimension. Therefore instead of looking at CDGA's we will consider profinite CDGA's and we will prove that this category is equivalent to CDGC. A key argument being the fact that every CDGC is a colimit of its finite dimensional subcoalgebras. Our proof was inspired by  \cite[Prop 1.7]{GetzlerGoerss}.
		 
		 We remind the definition of pro-objects as it appears in \cite[def 2.1]{DanielC2002}
		 \begin{definition}[Pro-objects]\label{Definition Pro Objects}
		 	\begin{enumerate}
		 		\item A left filtered category $\mathcal{J}$ is a non empty category satisfying the following conditions:
		 		\begin{enumerate}
		 			\item for any pair of objects $c_1, c_2 \in \mathcal{J},$ there exists an object $c_3\in \mathcal{J}$ and morphisms  $c_3 \longrightarrow c_1$ and $c_3 \longrightarrow c_2.$
		 			\item For every pair of morphisms $f,g: c_1 \longrightarrow c_2 $ in $\mathcal{J},$  there exists a morphism $h:c_0 \longrightarrow c_1$ such that $fh=gh.$
		 		\end{enumerate}
		 		\item A category $\mathcal{J}$ is  right filtered when  $\mathcal{J}^{op}$ is left filtered.
		 		\item For a category $\mathcal{C},$ the category of pro-objects on $\mathcal{C}$ and denoted $\text{pro-}\mathcal{C}$ is defined as follows:
		 		\begin{enumerate}
		 			\item Objects are  diagrams $X: \mathcal{J} \longrightarrow \mathcal{C},$ where $\mathcal{J}$ is a left filtered diagram.
		 			\item Given two pro-objects $X: \mathcal{J} \longrightarrow \mathcal{C}$ and $Y: \mathcal{K} \longrightarrow \mathcal{C},$ a morphism of pro-objects from $X$ to $Y$ is an element in the set
		 			\begin{center}
		 				$Hom_{\text{pro-}\mathcal{C}}(X, Y)= \underset{k \in \mathcal{K}}{\text{ lim }}  \underset{j \in \mathcal{J}}{\text{ colim }} Hom_\mathcal{C}(X_j, Y_k)$
		 			\end{center}
		 		\end{enumerate}
		 	\end{enumerate}
		 \end{definition}
		 
		 \begin{definition}[Profinite CDGA]
		 	Let $CDGA_f$ be the category of  non positively graded finite dimensional commutative $\Bbbk$-algebras.
		 	The category $\text{pro-}CDGA_f$ is called the category of profinite CDGA's..
		 \end{definition}

		 \newpage
		 
		 \begin{mylemma}\label{lemma hom commutes with filtered colimits}
		 	Let $\mathcal{C}$ be any of the categories CDGC or $Ch_{\geq \Bbbk}.$
		 	Let $\{C_\alpha\}$ be a right filtered diagram in $\mathcal{C}$ and let $D$ be a finite dimensional object in $\mathcal{C}.$ Then the natural map
		 	\begin{center}
		 		$\gamma:	\underset{\alpha}{ \text{ colim }} Hom_{ \mathcal{C}}(D, C_\alpha) \longrightarrow Hom_{\mathcal{C}}(D, \underset{\alpha}{ \text{ colim }} C_\alpha) $
		 	\end{center}
		 	is an isomorphism.
		 \end{mylemma}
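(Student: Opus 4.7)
The plan is to treat the two cases separately, with the chain-complex case feeding into the coalgebra case.

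For the case $\mathcal{C} = Ch_{\geq 0}(\Bbbk)$, colimits are computed degreewise and, since $\Bbbk$ is a field, $D$ being finite dimensional means each $D_n$ is a finite dimensional vector space and only finitely many $D_n$ are nonzero. The statement thus reduces to the classical fact that in $\Bbbk$-vector spaces, $\mathrm{Hom}(V,-)$ commutes with filtered colimits when $V$ is finite dimensional: picking a basis of $V$, a map $V\to \mathrm{colim}\, W_\alpha$ is specified by finitely many elements of the colimit, each represented at some stage, and by filteredness one common stage realizes all of them, while linear relations among the basis images can be imposed at a yet later stage. Applying this in each of the finitely many nontrivial degrees, then absorbing the (finitely many per degree) chain map conditions $d_{C_\alpha}\circ f = f\circ d_D$ into a further application of filteredness, yields the bijectivity of $\gamma$.

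For the case $\mathcal{C} = CDGC$, the first task is to show that the forgetful functor $U\colon CDGC \to Ch_{\geq 0}(\Bbbk)$ creates filtered colimits. Given $C = \mathrm{colim}\, C_\alpha$ computed in chain complexes, the compatibility of $\otimes$ with filtered colimits (which holds degreewise in vector spaces) gives a natural identification $C\otimes C \cong \mathrm{colim}_\alpha(C_\alpha\otimes C_\alpha)$, so the comultiplications $\triangle_\alpha$ assemble into a map $\triangle\colon C \to C\otimes C$; cocommutativity, coassociativity and the counit axiom pass to the colimit. Granted this, given $f\colon D\to C$ in $CDGC$ I use the first paragraph to find $\alpha$ and a chain map $\tilde f\colon D\to C_\alpha$ lifting the underlying chain map of $f$. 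To promote $\tilde f$ to a coalgebra map, I compare the two chain maps $(\tilde f\otimes \tilde f)\circ \triangle_D$ and $\triangle_{C_\alpha}\circ \tilde f$ from $D$ to $C_\alpha\otimes C_\alpha$: they coincide after postcomposition with $C_\alpha\otimes C_\alpha \to C\otimes C$, and since $D\otimes D$ is finite dimensional while $C\otimes C = \mathrm{colim}_\beta(C_\beta\otimes C_\beta)$, the first paragraph supplies some $\beta\geq \alpha$ where the two maps already agree in $C_\beta\otimes C_\beta$; replacing $\alpha$ by $\beta$ yields the desired coalgebra lift, proving surjectivity of $\gamma$. Injectivity is simpler, reducing directly to the chain complex case since a coalgebra map is determined by its underlying chain map.

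The main obstacle guiding this plan is the promotion from chain map to coalgebra map, i.e. verifying that the comultiplication compatibility can be achieved at some finite stage of the diagram. This is exactly the step that requires both the finite dimensionality of $D\otimes D$ and the commutation of $\otimes$ with filtered colimits; once these inputs are in place, the entire argument becomes a clean iterated reduction to the statement for finite-dimensional vector spaces.
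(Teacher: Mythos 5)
Your proof is correct and follows essentially the same route as the paper: factor the underlying chain map of $f\colon D\to \mathrm{colim}\,C_\alpha$ through some finite stage $C_\alpha$, then use the commutation of $\otimes$ with filtered colimits (so $C\otimes C\cong \mathrm{colim}_\beta\,C_\beta\otimes C_\beta$) together with finite dimensionality of $D$ to push to a later stage $C_\beta$ where the comultiplication compatibility holds, making the lift a coalgebra map. The only difference is expository: you spell out the vector-space/chain-complex base case and the creation of filtered colimits in $CDGC$, which the paper leaves implicit.
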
	
		 \begin{proof} We only make the proof in the case $\mathcal{C}=CDGC$ as this proof will include the chain complexes case.
		 	The map  $\gamma$ is injective using the universal property of colimits. 
		 	
		 	To prove that $\gamma$ is surjective, we consider a morphism of cocommutative coalgebras $f: D \longrightarrow C=\underset{\alpha}{ \text{ colim }} C_\alpha,$ this map factors as a morphism of chain complexes $D \longrightarrow C_\alpha$ since $D$ is finite. Since $f$ is a coalgebra morphism, we then have the following diagram
		 	\begin{center}
		 		$\xymatrix{ D \ar[r]^{}\ar[d] & C_\alpha \ar[r]& C= \underset{\alpha}{\text{colim }} C_\alpha\ar[d]\\
		 		D\otimes D \ar[r]& C_\alpha\otimes C_\alpha \ar[r]& C\otimes C
		 		}$
		 	\end{center}
		 On the other hand, since $-\otimes-$ preserves filtered colimits we have   $C\otimes C	= \underset{\alpha}{\text{colim }} C_\alpha \otimes C_\alpha.$ We then use the fact that $D$ is finite to deduce that there is a morphism $C_\alpha \longrightarrow C_\beta$ so that the following diagram commutes
		 	\begin{center}
		 	$\xymatrix{ D \ar[r]^{}\ar[d] & C_\alpha \ar[r] &C_\beta \ar[r]\ar[d] & C\ar[d]\\
		 		D\otimes D \ar[r]& C_\alpha\otimes C_\alpha \ar[r] & C_\beta\otimes C_\beta \ar[r] & C\otimes C
		 	}$
		 \end{center}
		This proves that $D \longrightarrow C_\alpha \longrightarrow C_\beta$ is a morphism of coalgebras.
		 We then deduce that $\gamma$ is surjective.					
		 \end{proof}

		 Since the dual of a finite dimensional commutative algebra is a cocommutative coalgebra, one deduces  the following definition:
		 \begin{definition}[Dual of profinite algebra]\label{Dual of profinite}
		 	We define the linear dual of profinite CDGA's as the contravariant functor 
		 	$D: \text{pro-}CDGA_f^{op} \longrightarrow CDGC$ such that:
		 	\begin{enumerate}
		 		\item For objects: given a  profinite commutative algebra $\{A_\alpha\},$ we set
		 		\begin{center}
		 			$D(\{A_\alpha\} ):= \underset{\alpha}{ \text{colim }} A_\alpha^{\vee},$
		 		\end{center} where $A_\alpha^{\vee}$ denotes the  dual of $A_\alpha;$
		 		\item For morphisms: given two profinite algebras $A=\{A_\alpha\}$ and $B=\{B_\beta\},$ we define the map of sets
		 		\begin{center}
		 			$Hom_{\text{pro-}CDGA_f}(A,B) \longrightarrow Hom_{CDGA}(D(B), D(A))$
		 		\end{center}
		 		as the composite
		 		\begin{align}
		 		\underset{\beta}{\text{ lim}}  	\underset{ \alpha}{\text{ colim }} Hom_{CDGA}(A_\alpha, B_\beta)&\cong \underset{\beta}{\text{ lim}}  	\underset{ \alpha}{\text{ colim }}Hom_{CDGC}(B_\beta^{\vee}, A_\alpha^{\vee})\\
		 		&	\overset{}{\cong}  \underset{\beta}{\text{ lim }}  	Hom_{CDGC}(B_\beta^{\vee}, \underset{ \alpha}{\text{ colim }} A_\alpha^{\vee})\\
		 		&	\cong  	Hom_{CDGC}(  \underset{\beta}{\text{ colim }}   B_\beta^{\vee}, \underset{ \alpha}{\text{ colim }} A_\alpha^{\vee})  				
		 		\end{align}	
		 		where the isomorphism  $(3)$ is provided by Lemma \ref{lemma hom commutes with filtered colimits}.
		 	\end{enumerate}
		 \end{definition}

		 \begin{proposition} \label{equivalence between coalgebra and profinite algebras}
		 	The linear dual functor described in Definition \ref{Dual of profinite} defines an anti-equivalence between the category $CDGC$  and the category $\text{pro-}CDGA_f.$
		 \end{proposition}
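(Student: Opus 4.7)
The plan is to show that $D$ is both fully faithful and essentially surjective as a contravariant functor $\text{pro-}CDGA_f \to CDGC$, which will then give the desired anti-equivalence. The basic building block is that linear duality restricts to an honest anti-equivalence between $CDGA_f$ and the category $CDGC_f$ of finite dimensional objects of $CDGC$: finite dimensional vector spaces are canonically reflexive, and under duality products correspond to coproducts, units to counits, and commutativity to cocommutativity. This bijection is precisely what makes step (2) of Definition \ref{Dual of profinite} meaningful.

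For full faithfulness, the map on Hom sets in Definition \ref{Dual of profinite} is by construction a composite of three natural bijections,
\begin{align*}
\lim_\beta \text{colim}_\alpha\, Hom_{CDGA_f}(A_\alpha, B_\beta) &\cong \lim_\beta \text{colim}_\alpha\, Hom_{CDGC}(B_\beta^\vee, A_\alpha^\vee)\\
&\cong \lim_\beta Hom_{CDGC}(B_\beta^\vee, \text{colim}_\alpha A_\alpha^\vee)\\
&\cong Hom_{CDGC}(\text{colim}_\beta B_\beta^\vee, \text{colim}_\alpha A_\alpha^\vee),
\end{align*}
where the first uses the finite dimensional anti-equivalence, the second is Lemma \ref{lemma hom commutes with filtered colimits} applied to the finite dimensional object $B_\beta^\vee$ and the filtered system $\{A_\alpha^\vee\}$, and the third is the universal property of a colimit in the first slot of $Hom$. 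The target equals $Hom_{CDGC}(D(B), D(A))$, so once one verifies that this composite respects composition and identities (a routine naturality check), $D$ is fully faithful.

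For essential surjectivity I would invoke the Getzler--Goerss Theorem \ref{Universal property of coalgebras} to write any $C \in CDGC$ as $C = \text{colim}_{i \in I} C_i$, with each $C_i$ a finite dimensional sub-coalgebra and $I$ filtered. Reversing arrows, the diagram $i \mapsto C_i^\vee$ becomes a pro-object of $CDGA_f$ indexed by the left filtered category $I^{op}$, and
\[ D(\{C_i^\vee\}) = \text{colim}_{i \in I}(C_i^\vee)^\vee \cong \text{colim}_{i \in I} C_i = C \]
by finite dimensional reflexivity, so $C$ lies in the essential image of $D$. The main subtlety I anticipate is the pro-category bookkeeping: dualising a pro-object indexed by a left filtered category produces a diagram over its opposite (right filtered) category, and one must ensure that the colimits entering the definition of $D$ are filtered colimits of exactly this kind so that Lemma \ref{lemma hom commutes with filtered colimits} applies and the output is a genuine object of $CDGC$. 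Once these indexing conventions are lined up, the remainder of the argument reduces to the universal property of (co)limits and finite dimensional linear duality.
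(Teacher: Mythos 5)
Your proof is correct, but it follows a genuinely different route from the paper's. You observe that full faithfulness is already built into Definition \ref{Dual of profinite}: the action of $D$ on hom-sets is, by construction, a composite of three bijections (finite-dimensional duality, Lemma \ref{lemma hom commutes with filtered colimits}, and the universal property of the colimit), so only essential surjectivity remains, and that follows from Theorem \ref{Universal property of coalgebras} together with reflexivity of finite-dimensional complexes; the standard criterion that a fully faithful, essentially surjective functor is an equivalence then gives the anti-equivalence. The paper instead exhibits an explicit quasi-inverse $D'(C) := \{C_\alpha^\vee\}$ and proves both natural isomorphisms: $C \cong DD'(C)$, which is exactly your essential-surjectivity computation, and the harder $\widehat{A} \cong D'D(\widehat{A})$, which forces it to factor arbitrary finite-dimensional sub-coalgebras $D_\beta \subset D(\widehat{A})$ through some $A_\delta^{\vee}$, to introduce the image coalgebras $\text{Im}(A_\alpha^\vee \to D(\widehat{A}))$, and to check that the maps $\varrho$ and $\varrho'$ are mutually inverse by testing against representable functors $Hom_{\text{pro-}CDGA_f}(-,P)$. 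Your approach buys brevity and sidesteps precisely this delicate pro-category bookkeeping; what it gives up is constructivity, and that has a small cost downstream: the paper's proof of Theorem \ref{Cofree Cocommutative_Coalgebra} reuses the explicit $D'$ and the adjunction-style isomorphism $Hom_{CDGC}(C, D(\{A_\alpha\})) \cong Hom_{\text{pro-}CDGA_f}(\{A_\alpha\}, D'(C))$ established inside the paper's argument, so on your route one should still record that the essential-surjectivity construction $C \mapsto \{C_\alpha^\vee\}$ is the quasi-inverse and that this hom formula holds. Finally, the ``routine naturality check'' of functoriality should not be waved off entirely: compatibility of your three bijections with the lim-colim composition of pro-morphisms is what makes $D$ a functor at all, and both your argument and the abstract criterion presuppose it --- though this is the same tacit check the paper relies on.
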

		 \begin{proof}
		 	This proposition  is similar to   \cite[Prop 1.7]{GetzlerGoerss} for associative coalgebras.
		 	
		 	The right inverse of the functor $D$  is given by the functor
		 	\begin{center}
		 		$D':  CDGC^{op} \longrightarrow \text{pro-}CDGA_f$ 
		 	\end{center} which is defined as follows: Let $C$ be a cocommutative coalgebra. Using Theorem \ref{Universal property of coalgebras}, one can write $C =\underset{\beta}{\text{colim }} C_\beta,$ where $C_\beta$ runs over finite dimensional sub cocommutative coalgebras of $C.$ We then set \begin{center}
		 	$D'(C):=\{C_\alpha^\vee\}$
		 \end{center}
		 where $C_\alpha^\vee$ is the dual of $C_\alpha.$	
		 
		 To complete the proof, one make the following computation: let $\widehat{A}=\{A_\alpha\}$ be an object of $\text{pro-}CDGA_f$  and  $C=\underset{\alpha}{\text{colim }} C_\alpha$ be a cocommutative coalgebra, where  $C_\alpha$ runs over finite dimensional sub cocommutative coalgebras of $C.$
		 \begin{align*}
		 Hom_{CDGC}(C,D(\{A_\alpha\}))&\cong \underset{\beta}{\text{ lim }}Hom_{CDGC} (C_\beta,  \underset{\alpha}{ \text{colim }} A_\alpha^{\vee})\\
		 & \cong \underset{\beta}{\text{ lim }} \underset{\alpha}{ \text{colim }} Hom_{ CDGC} (C_\beta,   A_\alpha^{\vee})\\
		 &\cong \underset{\beta}{\text{ lim }} \underset{\alpha}{ \text{colim }} Hom_{CDGA} (  A_\alpha , C_{\beta}^{\vee} )\\
		 & = Hom_{ \text{pro-}CDGA_f}(\{A_\alpha\}, D'(C) )
		 \end{align*}
		 In addition there is a natural isomorphism $C \overset{\cong}{\longrightarrow} DD'(C)$ justified by the following diagram
		 \begin{center}
		 	$\xymatrix{ C \ar[r] \ar[rd]_{\cong}& DD'(C) = \underset{\alpha}{\text{ colim }} C_\alpha^{\vee \vee}\ar[d]^{\cong}\\
		 		& \underset{\alpha}{\text{ colim }} C_\alpha
		 	}$
		 \end{center}
	 
	 To define explicitly and prove the other isomorphism $\varrho: \widehat{A} \overset{\cong}{\longrightarrow} D'D(\widehat{A}),$ we remind the following fact:
	 \begin{enumerate}
	 	\item [-] Any inclusion $D_\beta \hookrightarrow D(\widehat{A})= \underset{\alpha}{ \text{colim }} A_\alpha^{\vee}$ factors in the category of coalgebras as:
	 	\begin{center}
	 		$\xymatrix{ D_\beta \ar@{^{(}->}[rr] \ar@{^{(}->}[rd]_{j_*}& &\underset{\alpha}{ \text{colim }} A_\alpha^{\vee}\\
	 			& A_\delta^{\vee}\ar[ru]&
	 		}$
	 	\end{center}
 		\item [-] $D_\alpha:= \text{Im}( A^\vee_\alpha \longrightarrow D(\widehat{A}))$ is a sub-coalgebra of $D(\widehat{A})$ of finite dimension.
	 \end{enumerate}
	 
	We define the morphism of pro-objects  $\varrho: \widehat{A} \overset{}{\longrightarrow} D'D(\widehat{A})=\{ D_\beta\}$ as given by the maps $A_\delta \overset{j^*}{\longrightarrow} D_\beta^\vee$ of algebras dual of the morphisms of coalgebras $D_\beta \overset{j_*}{\longrightarrow} A_\delta^{\vee}.$ To prove that $\varrho$ is an isomorphism, we define the inverse map 
	$\varrho': D'D(\widehat{A})=\{ D_\beta\} \overset{}{\longrightarrow} \widehat{A} $ as given by the maps of the form $D^\vee_\alpha= ( \text{Im}( A^\vee_\alpha \longrightarrow D(\widehat{A})))^\vee \overset{k^*}{\longrightarrow} A_\alpha$ which is dual of natural maps $A_\alpha^\vee \overset{k_*}{\longrightarrow} \text{Im}( A^\vee_\alpha \longrightarrow D(\widehat{A})).$
	
Now to prove that $\varrho$ and $\varrho'$ are truly isomorphism  inverse, it will be enough to show that $Hom_{pro-CDGA_f}(\varrho, P)$ and  $Hom_{pro-CDGA_f}(\varrho', P),$ $P\in CDGA,$ are isomorphism inverse. For this later claim, we make the following computation:

\begin{align*}
\underset{\alpha}{\text{colim }} Hom(A_\alpha, P) &\overset{Hom(\varrho', P)}{\longrightarrow} \underset{\beta}{\text{colim }} Hom(D^\vee_\beta, P) & \overset{Hom(\varrho, P)}{\longrightarrow} \underset{\alpha}{\text{colim }} Hom(A_\alpha, P) \\
[A_\alpha\overset{f}{\longrightarrow}P]& \mapsto [D^\vee_\alpha \overset{k^*}{\longrightarrow} A_\alpha\overset{f}{\longrightarrow}P ] \longmapsto  &[A_\delta \overset{j^*}{\longrightarrow}D^\vee_\alpha \overset{k^*}{\longrightarrow} A_\alpha\overset{f}{\longrightarrow}P ] =[A_\alpha\overset{f}{\longrightarrow}P];
\end{align*}
which proves that 	$Hom_{pro-CDGA_f}(\varrho, P)\circ Hom_{pro-CDGA_f}(\varrho', P)=Id.$

On the other hand, we have the composite:
	\begin{align*}
	\underset{\beta}{\text{colim }} Hom(D^\vee_\beta, P) &\overset{Hom(\varrho, P)}{\longrightarrow}  \underset{\alpha}{\text{colim }} Hom(A_\alpha, P)& \overset{Hom(\varrho', P)}{\longrightarrow} \underset{\beta}{\text{colim }} Hom(D^\vee_\beta, P)        \\
	[D^\vee_\beta\overset{g}{\longrightarrow}P]& \mapsto [A_\delta \overset{j^*}{\longrightarrow}    D^\vee_\beta\overset{g}{\longrightarrow}P] \longmapsto  &[D^\vee_\delta \overset{k^*}{\longrightarrow}A_\delta \overset{j^*}{\longrightarrow}    D^\vee_\beta\overset{g}{\longrightarrow}P] =[D^\vee_\beta\overset{g}{\longrightarrow}P];
	\end{align*}
	which proves that 	$Hom_{pro-CDGA_f}(\varrho', P)\circ Hom_{pro-CDGA_f}(\varrho, P)=Id.$
	 
		\end{proof}

			The consequences of Proposition \ref{equivalence between coalgebra and profinite algebras}   is the next proposition which gives the cofree construction of cocommutative coalgebras. Note that the forgetful functor from $CDGC$ to $Ch_{\geq 0}$ 
			 makes all colimits in $CDGC$ (see \cite[1.8.]{GetzlerGoerss}).
			
			\begin{theorem}\label{Cofree Cocommutative_Coalgebra}
				The forgetful functor $U: CDGC \longrightarrow Ch_{\geq 0}(\Bbbk) $ has a right adjoint $C:  Ch_{\geq 0}(\Bbbk)  \longrightarrow  CDGC.$
			\end{theorem}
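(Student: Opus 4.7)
The plan is to transport the adjunction problem across the anti-equivalence $D:(\text{pro-}CDGA_f)^{op}\xrightarrow{\simeq} CDGC$ of Proposition \ref{equivalence between coalgebra and profinite algebras}, and to combine it with the classical free-forgetful adjunction $\Lambda: Ch_{\leq 0}(\Bbbk)\rightleftarrows CDGA: U$ from \eqref{Equation Adjunction between CDGA and Ch}. Under $D$, producing a right adjoint $C$ to $U:CDGC\to Ch_{\geq 0}(\Bbbk)$ is equivalent to producing a functor $\widehat{C}:Ch_{\geq 0}(\Bbbk)\to \text{pro-}CDGA_f$ together with a natural isomorphism
\[
\text{Hom}_{\text{pro-}CDGA_f}(\widehat{C}(V),\widehat{A})\;\cong\;\text{Hom}_{Ch_{\geq 0}(\Bbbk)}\bigl(UD(\widehat{A}),V\bigr).
\]

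To build $\widehat{C}(V)$, I would first write $V=\text{colim}_\gamma V_\gamma$ as the filtered colimit of its finite-dimensional subcomplexes. Each dual $V_\gamma^\vee$ is a finite-dimensional object of $Ch_{\leq 0}(\Bbbk)$, and $\Lambda(V_\gamma^\vee)$ is a commutative DG algebra (in general of infinite dimension). I then define $\widehat{C}(V)$ to be the pro-object indexed by pairs $(\gamma,I)$, where $I\subseteq \Lambda(V_\gamma^\vee)$ is a homogeneous ideal of finite codimension, with value the finite-dimensional CDGA $\Lambda(V_\gamma^\vee)/I$, and set $C(V):=D(\widehat{C}(V))$.

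To verify the adjunction, I would use Theorem \ref{Universal property of coalgebras} to write an arbitrary $K\in CDGC$ as a filtered colimit $K=\text{colim}_\beta K_\beta$ of finite-dimensional subcoalgebras, and chain together, for each $\beta$, the identifications provided by: (i) Proposition \ref{equivalence between coalgebra and profinite algebras}; (ii) the definition of morphisms in pro-categories (Definition \ref{Definition Pro Objects}); (iii) the fact that any CDGA morphism $\Lambda(V_\gamma^\vee)\to K_\beta^\vee$ factors through a finite-codimensional quotient, because its target is finite-dimensional; (iv) the $\Lambda\dashv U$ adjunction; and (v) finite-dimensional linear duality, which identifies $\text{Hom}(V_\gamma^\vee,K_\beta^\vee)$ with $\text{Hom}(K_\beta,V_\gamma)$. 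Reassembling the colimits over $\gamma$ and $\beta$, using compactness of the $K_\beta$ in $Ch_{\geq 0}(\Bbbk)$ and the relation $K=\text{colim}_\beta K_\beta$, yields the desired isomorphism $\text{Hom}_{CDGC}(K,C(V))\cong \text{Hom}_{Ch_{\geq 0}(\Bbbk)}(UK,V)$.

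The main obstacle is the bookkeeping for $\widehat{C}$: the indexing category of pairs $(\gamma,I)$ has to be organized into a genuine cofiltered diagram, and the functoriality of $V\mapsto \widehat{C}(V)$ has to be checked by hand, since a morphism $V\to W$ only induces maps on the finite-dimensional subcomplexes after a compatible refinement of indices. Otherwise the argument is essentially routine, because Proposition \ref{equivalence between coalgebra and profinite algebras} has already done the heavy structural work, and the construction closely parallels \cite[Prop 1.7]{GetzlerGoerss}.
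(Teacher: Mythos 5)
Your proposal is correct and takes essentially the same route as the paper: the paper likewise defines $C(V)$ as $\mathrm{colim}_I\,(\Lambda V_\alpha^\vee/I)^\vee$ over finite-dimensional subcomplexes $V_\alpha$ and finite-codimensional differential ideals $I$ (doing it in two steps, finite $V$ first and then a filtered colimit, where you merge the two indexings into a single pro-object), and it verifies the adjunction by exactly your chain of identifications: the anti-equivalence of Proposition \ref{equivalence between coalgebra and profinite algebras}, the factorization of algebra maps into finite-dimensional targets through finite-codimensional quotients (Lemma \ref{Lemma Colim with profinite algebras}), the $\Lambda\dashv U$ adjunction, finite-dimensional duality, and the Getzler--Goerss decomposition of a coalgebra into finite-dimensional subcoalgebras. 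One small correction: the ideals $I$ must be required to be \emph{differential} ideals (closed under $d$), not merely homogeneous, so that each quotient $\Lambda(V_\gamma^\vee)/I$ is itself a DG algebra.
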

		
	We now  make a short detour to prove the next lemma that will  be used to prove  Theorem \ref{Cofree Cocommutative_Coalgebra}.

		\begin{mylemma}\label{Lemma Colim with profinite algebras}
			Let $W$ be a non-positively graded chain complex and let $\{\Lambda W /I \}$ be the profinite algebra where $I$ runs over all differential  ideals of $\Lambda W$ with finite codimension. Given any finite dimensional algebra $A,$ the natural map
			\begin{center}
				$\Gamma:  \underset{I}{\text{ colim }} Hom_{CDGA}( \Lambda W /I , A) \overset{}{\longrightarrow} Hom_{CDGA}( \Lambda W  , A),$
			\end{center}
			 induced by the quotient maps $  \Lambda W \longrightarrow   \Lambda W/I,$ is a bijection.
		\end{mylemma}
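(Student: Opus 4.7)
The plan is to verify injectivity and surjectivity of $\Gamma$ separately, using in both cases the fact that finite-codimensional differential ideals of $\Lambda W$ form a filtered poset under reverse inclusion (so that $I\cap J$ again has finite codimension), and that any CDGA morphism to a finite-dimensional target has a finite-codimension kernel.

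\textbf{Surjectivity.} Given a CDGA morphism $\phi:\Lambda W\longrightarrow A$, I would set $I:=\ker\phi$. Since $\phi$ is a morphism of commutative differential graded algebras, $I$ is a differential ideal. The induced factorisation $\bar\phi:\Lambda W/I\hookrightarrow A$ is injective, so $\Lambda W/I$ is of finite dimension (as a subspace of the finite-dimensional algebra $A$). Thus $I$ belongs to the indexing poset and $\bar\phi\in \mathrm{Hom}_{CDGA}(\Lambda W/I, A)$ is a preimage of $\phi$.

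\textbf{Injectivity.} Suppose two classes $[f:\Lambda W/I\to A]$ and $[g:\Lambda W/J\to A]$ satisfy $f\circ q_I=g\circ q_J=:\phi$. I would take $K:=I\cap J$, which is again a differential ideal. To see that $K$ has finite codimension, I would use the canonical injection
\begin{equation*}
\Lambda W/K\;\hookrightarrow\;\Lambda W/I\;\oplus\;\Lambda W/J,\qquad x+K\;\longmapsto\;(x+I,\,x+J),
\end{equation*}
whose target is finite dimensional by hypothesis. The inclusions $K\subset I$ and $K\subset J$ give quotient maps $\pi_{K,I}:\Lambda W/K\to \Lambda W/I$ and $\pi_{K,J}:\Lambda W/K\to \Lambda W/J$ in $CDGA_f$, and the compositions $f\circ\pi_{K,I}$ and $g\circ\pi_{K,J}$ both lift $\phi$ along the surjection $q_K:\Lambda W\to \Lambda W/K$. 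By surjectivity of $q_K$ these two maps coincide, so $f$ and $g$ represent the same element of the filtered colimit $\mathrm{colim}_I\,\mathrm{Hom}_{CDGA}(\Lambda W/I,A)$.

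There is essentially no obstacle here: the only point to be careful about is the verification that intersections and kernels remain differential ideals (immediate from $\phi$, respectively the inclusions, being chain maps compatible with the product) and the dimension count showing that these ideals have finite codimension, which reduces to the observation that $\Lambda W/I\oplus\Lambda W/J$ and $A$ are finite-dimensional. The filtering property of the poset of finite-codimension differential ideals (needed to interpret the left-hand side as a filtered colimit in the first place) follows from the same intersection argument.
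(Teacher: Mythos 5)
Your proposal is correct, and its core coincides with the paper's proof: the surjectivity step (take $I=\ker\phi$, observe it is a differential ideal of finite codimension because $\Lambda W/\ker\phi\cong\operatorname{Im}\phi\subset A$, and factor $\phi$ through the quotient) is exactly how the paper constructs its explicit inverse $\Gamma'$ and verifies $\Gamma\Gamma'=\mathrm{Id}$. Where you diverge is the other direction: the paper never intersects ideals; instead it checks $\Gamma'\Gamma=\mathrm{Id}$ on a single class $[g:\Lambda W/I\to A]$ by noting that $I\subseteq\ker\tilde g$ (where $\tilde g=g\circ q_I$), so that $g$ factors through $\Lambda W/\ker\tilde g$ and the two classes are identified by a transition map of the diagram -- an argument that works in any colimit of sets, with no appeal to filteredness. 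You instead prove injectivity by taking two classes with the same image, passing to $K=I\cap J$, and using surjectivity of $q_K$; this forces you to verify that $K$ has finite codimension (your embedding $\Lambda W/K\hookrightarrow \Lambda W/I\oplus\Lambda W/J$ does this cleanly), which is precisely the filteredness of the index poset. The trade-off: your route makes explicit a structural fact (filteredness) that the paper leaves implicit and that is in any case needed elsewhere (e.g.\ to invoke Lemma \ref{lemma hom commutes with filtered colimits} and to interpret the colimit in Definition \ref{Dual of profinite}), while the paper's route is marginally more economical, handling one ideal at a time via kernels only.
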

		\begin{proof}
			To define the inverse $\Gamma'$ of $\Gamma,$ we make first the following observation:
			
		given any algebra map $f:  \Lambda W \longrightarrow A,$ we form the following short exact sequence(of chain complexes) \begin{center}
			$0\longrightarrow \text{Ker}f \longrightarrow \Lambda W \longrightarrow \text{Im}f \longrightarrow 0.$
		\end{center}  Since $A$ has a finite dimension and $\text{Im}f \subset A,$ one deduces that $\text{Ker}f$ is a finite co-dimensional ideal of $ \Lambda W.$ 
		
		Using this observation we define the map 	\begin{center}
			$\Gamma': Hom_{CDGA}( \Lambda W  , A) \longrightarrow \underset{I}{\text{ colim }} Hom_{CDGA}( \Lambda W /I , A) $
		\end{center}
		by
		\begin{center}
			$\Gamma'_I(f):=[  \Lambda W/\text{Ker}f\overset{f}{ \longrightarrow} A ]$  
		\end{center}	
	and we make the following computation

			\begin{align*}
			\Gamma \Gamma'(  f:  \Lambda W \longrightarrow A)&= \Gamma( [  \Lambda W/\text{Ker}f \longrightarrow A ] )\\
			& = [\Lambda W\longrightarrow \Lambda W/\text{Ker}f \longrightarrow A]\\
			&= [ f:  \Lambda W \longrightarrow A]
			\end{align*}
			
			Conversely, let $[g: \Lambda W/I \longrightarrow A]$ be the class in  $\underset{I}{\text{ colim }} Hom_{CDGA}( \Lambda W /I , A) $ represented by $g: \Lambda W/I \longrightarrow A.$  We have: 
			\begin{align}
			\Gamma'\Gamma([g: \Lambda W/I \longrightarrow A]) &= \Gamma'(\widetilde{g}:  \Lambda W\longrightarrow \Lambda W/I \overset{g}{\longrightarrow} A)\\
			& = [ \Lambda W/\text{Ker}\widetilde{g} \overset{\widetilde{g}}{\longrightarrow} A]\\
			&\overset{}{=}[g: \Lambda W/I \longrightarrow A]
			\end{align}
			where $(7)$	is justified by the fact that $ I \subseteq \text{Ker}\widetilde{g}. $

		\end{proof}

		\begin{Proof of Theorem Cofree coalgebras}
			
			Let	$V$ be a finite dimensional chain complex. We denote by  $C(V)$  the continuous dual of the profinite completion of the symmetric algebra $\Lambda V^\vee.$  Namely
			\begin{align*}
			C(V)&=D(\{ \Lambda V^\vee /I\})\\
			&=  \underset{I}{\text{ colim }} (\Lambda V^\vee /I)^\vee
			\end{align*}
			where $I$ runs over all differential ideals of $\Lambda V^\vee$ of finite co-dimension.
			If $I\subset J$ are two such ideals in  $\Lambda V^\vee,$ then their induced map in this diagram is the dual of the quotient $\Lambda V^\vee /I \longrightarrow \Lambda V^\vee /J.$
			
			Let $C=\underset{\beta}{\text{ colim }} C_\beta$ be a cocommutative coalgebra written as its finite sub-coalgebras. We make the following computation
			\begin{align}
			Hom_{ CDGC}(C, C(V))&\overset{}{\cong} Hom_{ \text{pro-}CDGA_f }(\{ \Lambda V^\vee /I \}, D'(C))\\
			&\overset{}{\cong} \underset{\beta}{\text{ lim }}  \underset{ I}{\text{ colim }} Hom_{ CDGA }( \Lambda V^\vee/I , C^\vee_\beta)\\
			&\overset{}{\cong} \underset{\beta}{\text{ lim }}   Hom_{ CDGA }( \Lambda V^\vee , C^\vee_\beta)\\
			&\cong \underset{\beta}{\text{ lim }}   Hom_{ Ch_{\leq 0}(\Bbbk)}(  V^\vee , UC^\vee_\beta)\\
			&\cong \underset{\beta}{\text{ lim }}   Hom_{ Ch_{\geq 0}(\Bbbk)}(    UC_\beta, V)\\
			&	\cong   Hom_{ Ch_{\geq 0}(\Bbbk)}(    UC, V)
			\end{align}
			where
			\begin{enumerate}
				\item [-]$(8)$ is induced by Proposition \ref{equivalence between coalgebra and profinite algebras} and $D'(C)=\{C^{\vee}_\beta\};$
				\item[-] The isomorphism $(10)$ is given by Lemma \ref{Lemma Colim with profinite algebras}.
			\end{enumerate}

			For a general $V, $ we define 	\begin{center}
				$C(V):= \underset{\alpha}{\text{ colim }} C(V_\alpha)$
			\end{center}
			where $V_\alpha$ runs over finite dimensional sub chain complexes of $V.$ We  have
			\begin{align}
			Hom_{CDGC}(C, C(V))&\cong \underset{\beta}{ \text{lim }}Hom_{ CDGC}(C_\beta, C(V))\\
			&\cong \underset{\beta}{ \text{lim }}  \underset{\alpha}{ \text{colim }}   Hom_{  CDGC}(C_\beta, C(V_\alpha))\\
			&\cong \underset{\beta}{ \text{lim }}  \underset{\alpha}{ \text{colim }}   Hom_{  Ch_{\geq 0}(\Bbbk)}(UC_\beta, V_\alpha)\\
			& \cong Hom_{  Ch_{\geq 0}(\Bbbk)}(UC, V)
			\end{align}
			where $(16)$ follows from Lemma $5.$
		\end{Proof of Theorem Cofree coalgebras}

				\begin{theorem}\label{C(V)_contractible_lemma}
					We consider that $\Bbbk$ is a field of characteristic $0.$ If $V$ is an acyclic chain complex (i.e. $V\simeq 0$), then the natural map
					\begin{center}
						$C(V)\longrightarrow \Bbbk$
					\end{center} 
					is a quasi isomorphism.
				\end{theorem}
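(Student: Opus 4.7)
The plan is a reduction to a single two-dimensional disk complex, where $C(-)$ can be computed explicitly.

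First, since $\Bbbk$ is a field, any acyclic $V \in Ch_{\geq 0}(\Bbbk)$ splits (non-canonically) as a direct sum of elementary disks $D^n = \Bbbk\langle e_n, e_{n-1}\rangle$ with $de_n = e_{n-1}$: picking a linear complement $C_n$ to $\ker d_n$ in each $V_n$, the differential identifies $C_n$ with $\ker d_{n-1}$, and these pairings give the disks. Writing $V = \bigoplus_{j \in J} D^{n_j}$, the finite sub-sums $V_F = \bigoplus_{j \in F} D^{n_j}$ are cofinal among the finite-dimensional subcomplexes of $V$, since any finite-dimensional subcomplex has finite support in the direct-sum decomposition. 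By the construction of $C(-)$ for arbitrary chain complexes given at the end of the proof of Theorem \ref{Cofree Cocommutative_Coalgebra}, one then has $C(V) = \mathrm{colim}_F\, C(V_F)$; since homology commutes with filtered colimits it therefore suffices to prove the statement for each $V_F$.

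Next, as a right adjoint the functor $C$ preserves finite products, and in $Ch_{\geq 0}(\Bbbk)$ finite direct sums are products. In $CDGC$, the binary product of two cocommutative coalgebras is given by their tensor product, with projections $\mathrm{id} \otimes \varepsilon$ and $\varepsilon \otimes \mathrm{id}$ (this is standard, and dual to the fact that the coproduct in commutative algebras is the tensor product). Consequently $C(V_F) \cong \bigotimes_{j \in F} C(D^{n_j})$ as chain complexes, and by K\"unneth over the field $\Bbbk$ it is enough to show that for each $n \geq 1$ the counit $C(D^n) \to \Bbbk$ is a quasi-isomorphism.

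Finally, for $V = D^n$ the dual $(D^n)^\vee$ has two generators, one in even and one in odd (non-positive) degree, and $\Lambda(D^n)^\vee$ takes the form $\Bbbk[u] \otimes \Lambda(v)$ with $u$ polynomial in even degree and $v$ exterior in odd degree, the differential interchanging $u$ and $v$. Any finite-codimension differential ideal $I \subseteq \Lambda(D^n)^\vee$ must contain a power $u^m$ (otherwise the quotient is infinite-dimensional along $u$), and the identity $d(u^m) = m\, u^{m-1} v$ together with the divisibility by $m$ available in characteristic zero forces $u^{m-1}v \in I$. Hence the decreasing chain of explicit finite-codimension differential ideals $J_m := (u^m, u^{m-1}v)$ is cofinal, so that $C(D^n) = \mathrm{colim}_m (\Lambda(D^n)^\vee/J_m)^\vee$, and a short computation shows each $\Lambda(D^n)^\vee/J_m$ has homology $\Bbbk$ concentrated in degree zero (the key relation being $u^k = \tfrac{1}{k} d(u^{k-1}v)$ for $1 \leq k \leq m-1$, which is where characteristic zero is used). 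Dualizing and passing to the filtered colimit yields $H_\ast(C(D^n)) = \Bbbk$ concentrated in degree zero, with the augmentation an isomorphism on $H_0$.

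The main obstacle is this last step: checking cofinality of the $J_m$ among \emph{all} finite-codimension differential ideals, and treating both parities of $n$ (which swap the roles of the polynomial and exterior factors) in a uniform way. The first two steps are essentially formal, using only the decomposition of acyclic complexes over a field and the fact that the right adjoint $C$ converts direct sums into tensor products of coalgebras.
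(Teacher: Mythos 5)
Your first two steps are correct and are essentially the paper's own opening moves: the paper likewise reduces to finite acyclic complexes, writes them as $D(n_1)\times\cdots\times D(n_k)$, and uses that $C$, being a right adjoint, carries this to the product in $CDGC$ (the tensor product of coalgebras). The genuine gap is in your third step, and it is concentrated on the bottom disk $D^1=(\Bbbk e_1 \overset{\cong}{\longrightarrow} \Bbbk e_0)$, which in the paper's indexing is $D(0)$. There the polynomial generator $u=e_0^\vee$ of $\Lambda (D^1)^\vee$ sits in degree $0$, so every polynomial in $u$ is homogeneous, and your claim that a finite-codimension differential ideal ``must contain a power $u^m$'' is false: finiteness of the quotient only forces \emph{some} monic $q(u)$ into the ideal, and for any such $q$ the ideal $(q(u),q'(u)v)$ is a graded differential ideal of finite codimension. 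For $q(u)=u-1$ the quotient is $\Bbbk$ with $u\mapsto 1$, so no power of $u$ lies in the ideal and hence no $J_m$ is contained in it. Thus $\{J_m\}$ is not cofinal and the identification $C(D^1)\cong\mathrm{colim}_m(\Lambda (D^1)^\vee/J_m)^\vee$ is unjustified. (For the disks $D^n$ with $n\geq 2$ both dual generators have nonzero degree and cofinality does hold, but the correct reason is the truncation argument the paper uses -- a graded ideal of finite codimension contains everything of sufficiently negative degree -- not your differential argument, which moreover only makes sense in the parity where $du=v$; in the other parity $d(u^m)=0$.)

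This is not a repairable technicality, and it is exactly the point where the paper switches strategy: it treats $n\neq 0$ by degree truncation and treats $D(0)$ separately, using the genuinely cofinal family $(q(u),dq(u))$, $q$ monic. Working with that correct family shows what your restricted colimit misses: for squarefree $q$ with several roots one has $(q,q')=(1)$, so $\Lambda(u,v)/(q(u),q'(u)v)\cong\Bbbk[u]/(q)$ sits in degree $0$ with zero differential -- it is \emph{not} contractible, and dually $C(D^1)$ contains a group-like element for every point $a\in\Bbbk$ (equivalently, every dg algebra map $\Lambda(u,v)\to\Bbbk$), whereas your colimit over $\{J_m\}$ sees only the component at $a=0$. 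So the degree-zero disk carries the entire difficulty of the statement, and any complete proof must explain why these extra group-likes do not produce extra homology classes. Be aware that the paper's own treatment of this case is also defective at precisely this spot: its Euclidean-division basis for $\Lambda(x,y)/(q,dq)$, with $y$-part $\Bbbk[x]/(q')$, is only valid when $q'$ divides $q$, i.e. when $q=(x-a)^m$; already for $q=x(x-1)$ (which is divisible by $x$) the quotient is $\Bbbk^2$ concentrated in degree $0$, contradicting the claim there that each such quotient is contractible. Your closing remark that cofinality and the parity issue are ``the main obstacle'' is therefore accurate, but the obstacle is where the substance of the theorem lies, and the proposal does not overcome it.
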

				\begin{proof}
					We have $V= \underset{\alpha}{\text{ colim }} V_\alpha,$ where $V_\alpha$ are finite and acyclic subcomplexes of $V.$ We use this system to write 
			 $C(V)= \underset{\alpha}{\text{ colim }} C(V_\alpha)$  (see the proof of Thm \ref{Cofree Cocommutative_Coalgebra}).

					runs over finite subcomplexes of $V.$ Then the proof reduces to proving that $C(V)\overset{\simeq}{\longrightarrow} \Bbbk$ when $V$  is a finite and acyclic chain complex. 
					On the other hand, a finite and acyclic chain complex is of the form:
					$V=D(n_1)\times ...\times D(n_k)$ where $\forall n, D(n)$ is the chain complex  such that 
					
					\begin{center}
						\[ D(n)_i:= \left\{ \begin{array}{ll}
						\Bbbk  & \mbox{if $i=n$ or $i=n+1$},\\
						0 & \mbox{ortherwise},\end{array} \right. \] 
						
						with the identity $Id: \Bbbk  \longrightarrow \Bbbk $ as  the differential $d: D(n)_{n+1}\longrightarrow D(n)_n.$ 
					\end{center} 
					Since the functor $C$ is a right adjoint functor (proved in Thm \ref{Cofree Cocommutative_Coalgebra}), one has 
					\begin{center}
						$C(V)= C(D(n_1))\times ... \times C(D(n_k))$
					\end{center}
					Therefore the proof reduces to proving that $C(D(n))\overset{\simeq}{\longrightarrow} \Bbbk,$ for any $n.$ In what follows, before proving this result we give first an explicit description of $C(D(n)).$
					\begin{enumerate}
						\item When $n \neq 0,$ we have
						\begin{center}
							 $C(D(n))= \underset{I}{\text{ colim }} (\Lambda(x,y)/I)^\vee,$ 
							 
							 where $|x|=-n, |y|=-n-1$ and $dx=y.$
						\end{center}
						Let $I_m$ be the ideal of $\Lambda(x,y)$ which consists of elements of degree less that $-m.$ Since the system $\Lambda(x,y)/I_m,  m\geq 0,$ is cofinal among the quotients of $\Lambda(x,y)$ of finite dimension, one deduces that 
						\begin{align*}
						C(D(n))=& \underset{m}{\text{ colim }} (\Lambda(x,y)/I_m)^\vee\cong \Lambda(x^*, y^*),
						\end{align*} 
						where $|x^*|=n, |y^*|=n+1$ and $dy^*=x^*.$
						This is clearly an acyclic chain complex.
						\item When $n = 0,$ we still have
						\begin{center}
							$C(D(n))= \underset{I}{\text{ colim }} (\Lambda(x,y)/I)^\vee,$ 
							 with $|x|=0, |y|=-1$ and $dx=y.$
						\end{center}
						Given a monic polynomial $q(x),$  let $(q(x), dq(x))$ be the ideal of $\Lambda(x,y)$ generated by $ \{q(x), dq(x)\}.$ 
						One can see that the system given by the quotients 
						\begin{center}
								$\Lambda(x,y)/( q(x), dq(x))$
						\end{center}
					 is cofinal among the quotients of $\Lambda(x,y)$ of finite dimension. 
						We can then deduce as in the previous case that 
						\begin{align*}
						C(D(0))& = \underset{q(x)}{\text{ colim }} (\Lambda(x,y)/(q(x),dq(x)))^\vee\\
						& \cong \underset{q(x),\ \  x \text{ divides } q(x)}{\text{ colim }} (\Lambda(x,y)/(q(x),dq(x)))^\vee.
						\end{align*}
						At this point, since the colim used in this expression is filtering,  to show that $C(D(0))$ is contractible it is sufficient to prove that each complex $\Lambda(x,y)/(q(x), dq(x))$ is contractible.
						
						Let $n=degree(q(x)).$ We consider the following morphism 
						\begin{center}
							$\Phi: \Lambda(x,y)/(q(x), dq(x)) \longrightarrow \Bbbk1 \oplus ...\oplus \Bbbk x^{n-1} \oplus \Bbbk y \oplus ... \oplus \Bbbk x^{n-2}y$
						\end{center}
						which associates to each class $[\alpha]$ presented by $\alpha=u(x)+yv(x),$ the polynomial $\Phi([\alpha]):=u_1(x)+yv_1(x),$ where $u_1(x)$ (resp. $v_1(x)$) is the rest of Euclidean division of $u(x)$ (resp. $v(x)$) by $q(x)$ (resp. $q'(x)$). This morphism is clearly a bijection.
						 
						 On the other hand, since one have the  differential equation $d(y)=x,$ the projection 
						 \begin{center}
						 	$\Bbbk1 \oplus ...\oplus \Bbbk x^{n-1} \oplus \Bbbk y \oplus ... \oplus \Bbbk x^{n-2}y \longrightarrow \Bbbk1= \Bbbk$
						 \end{center}
						is a quasi-isomorphism. This completes the proof.
								
					\end{enumerate}
				\end{proof}

				\subsection{Weak  monoidal adjoint pair}
				\begin{definition}[Quillen pair]\begin{enumerate}
						\item [(1)] 	A pair of adjoint functor \begin{center}
							$\lambda: \mathcal{C}  \rightleftarrows \mathcal{D}: R$
						\end{center}
						between two model categories is a $\emph{Quillen pair}$ if the right adjoint $R$ preserves fibrations and trivial fibrations.
						
						A Quillen pair $(\lambda, R)$ induces a derived adjoint pair $(\mathbb{L}\lambda, \mathbb{L}R)$ of functors between the homotopy categories of $\mathcal{C}$ and $\mathcal{D}$(see  \cite[I.4.5]{Quill67}). 
						\item [(2)] A Quillen pair is a $\emph{Quillen equivalence}$ if the associated derived functors are  equivalences between the homotopy categories.
					\end{enumerate}
					
				\end{definition}
				
				\begin{definition}[Weak monoidal]

					A $\emph{weak monoidal Quillen pair}$ between two monoidal model categories $(\mathcal{C}, \wedge, \mathbb{I}_{\mathcal{C}})$ and $(\mathcal{D}, \wedge, \mathbb{I}_{\mathcal{D}})$ consists of a Quillen pair  $\lambda: \mathcal{C}  \rightleftarrows \mathcal{D}: R$  satisfying the following conditions:
					\begin{enumerate}
						\item [a)] For any pair of objects $X $ and $Y$ in $\mathcal{D},$ there is a  morphism 
						\begin{center}
							$\varphi_{X,Y}: R(X)\wedge R(Y)\longrightarrow R(X\wedge Y),$ and a morphism $\nu: \mathbb{I}_{\mathcal{C}}\longrightarrow R(\mathbb{I}_{\mathcal{D}}),$ 
						\end{center}
						natural in $X , Y$ and coherently associative, commutative  and unital (see diagrams 6.27 and 6.28 of $\cite{Borc94}$).
						\item [b)] If $X$ and $Y$ are cofibrant, then the adjoint of the composition 
						
						$X\wedge Y\overset{\eta \wedge \eta }{\longrightarrow} R\lambda(X)\wedge R\lambda(Y)\overset{\varphi_{\lambda(X), \lambda(Y)}}{\longrightarrow} R(\lambda(X)\wedge \lambda(Y))$
						and 	denoted 
						\begin{center}
							$\widetilde{\varphi}_{X,Y}: \lambda (X\wedge Y)\longrightarrow \lambda(X)\wedge \lambda(Y)$
						\end{center}	
						is  a weak equivalence.
						\item [c)] for some (and hence any) cofibrant replacement $\mathbb{I}_{\mathcal{C}}^{c}\overset{\simeq}{\longrightarrow} \mathbb{I}_{\mathcal{C}}$ of the unit object in $\mathcal{C},$ the composite map 
						$ \lambda(\mathbb{I}_{\mathcal{C}}^{c})\overset{ }{\longrightarrow} \lambda(\mathbb{I}_{\mathcal{C}}) \overset{ \widetilde{\nu}}{\longrightarrow} \mathbb{I}_{\mathcal{D}}$
						is a weak equivalence in $\mathcal{D},$ where 	
						
						$\widetilde{\nu}:\lambda(\mathbb{I}_{\mathcal{C}})\longrightarrow \mathbb{I}_{\mathcal{D}} $ is the adjoint of $\nu.$
					\end{enumerate}
				\end{definition}
				
				In the literature (see \cite{SS03}), one says that the functor $R$ is \emph{lax symmetric monoidal} when condition $a)$ is satisfied. In this note we sometimes refer to maps $\varphi_{X,Y}$ as the lax monoidal structure morphisms associated to $R.$

				\subsection{Symmetric sequence}
				We give here the definition of a symmetric sequence along with the monoidal structure on the category of symmetric sequences. We refer to \cite{Ching12} for more details on this topic. 
				
				Let $(\mathcal{C}, \wedge, \mathbb{I}_{\mathcal{C}})$ be a symmetric monoidal category.
				
				\begin{definition}[Symmetric sequence]
					A symmetric sequence in the category $\mathcal{C}$ is a functor $M:FinSet\longrightarrow \mathcal{C}$ from the category $FinSet,$ whose objects are finite sets and whose morphisms are bijections, to $\mathcal{C}.$ Denote the category of all symmetric sequences in $\mathcal{C}$ by $[FinSet, \mathcal{C}]$(in which morphisms are natural transformations).
				\end{definition}
				
				Let $FinSet_0$ be the category whose objects are the finite sets $\underline{r}:=\{1, ..., r\}$ for $r\geq 0$ (with $\underline{0}$ the empty set), and whose morphisms are bijections. $FinSet_0$ is clearly a subcategory of $FinSet,$ and any symmetric sequence $M:FinSet\longrightarrow \mathcal{C}$ is determined, up to canonical isomorphism, by its restriction $M:FinSet_0\longrightarrow \mathcal{C}.$ This restriction consists of the sequence $M(\underline{0}), M(\underline{1}), M(\underline{2}), ...$ of objects in $\mathcal{C},$ together with an action of the symmetric group $\Sigma_r$ on $M(\underline{r}),$ hence the name "symmetric sequence."
				
				\begin{definition}
					For a finite set $J,$ we define the category $J/FinSet_0$ as follows. The class of objects of $J/FinSet_0$  consists of all functions (not necessary bijection) $f:J\longrightarrow I$ for some $I\in FinSet_0,$ and the set of morphisms from $(f:J\longrightarrow I)$ to $(f':J\longrightarrow I')$ is the set of bijections $\sigma: I\longrightarrow I'$ such that $f'=\sigma\circ f.$
				\end{definition}
				\begin{definition}
					Let $M$ and $N$ be two symmetric sequences in $\mathcal{C}.$ For each finite set $J,$ we define a functor \begin{center}
						$(M,N):J/FinSet_0\longrightarrow \mathcal{C}$
					\end{center}
					on objects by \begin{center}
						$(M,N)(f:J\longrightarrow I):=M(I)\wedge\underset{i\in I}{\bigwedge}N(f^{-1}(i)).$
					\end{center}
					For morphism $\sigma: I\longrightarrow I'$ in $J/FinSet_0$ we define 
					\begin{center}
						$(M,N)(\sigma):=M(I)\wedge\underset{i\in I}{\bigwedge}N(f^{-1}(i))\longrightarrow M(I')\wedge\underset{i'\in I'}{\bigwedge}N((\sigma f)^{-1}(i')).$
					\end{center}
					by combining map $M(\sigma)$ with the permutation of the smash product identifying the term corresponding to $i\in I$ with the term corresponding to $\sigma(i)\in I'.$ 
				\end{definition}
				\begin{definition}[Composition product]\label{composition_product}
					For symmetric sequences $M,N,$ we define a symmetric sequence $M\circ N$ by \begin{center}
						$ ( M\circ N)(J):=\underset{f\in J/FinSet_0}{colim}(M,N)(f).$
					\end{center}
					A bijection $\theta: J\longrightarrow J'$ determines a map $(M,N)(f)\longrightarrow (M,N)(f\circ \theta ^{-1})$ whose explicit as a map
					\begin{center}
						$ M(I)\wedge\underset{i\in I}{\bigwedge}N(f^{-1}(i))\longrightarrow M(I)\wedge\underset{i\in I}{\bigwedge}N(\theta(f^{-1}(I))) $
					\end{center}
				is given by  the identity on $M(I)$ and by the action of the bijections $\theta\rvert_{f^{-1}(i)}:f^{-1}(i)\overset{\cong}{\longrightarrow} \theta (f^{-1}(i))$ on the symmetric sequence $N.$
 We thus obtain induced maps 
					\begin{center}
						$(M\circ N)(\theta):= (M\circ N)(J)\longrightarrow (M\circ N)(J')$
					\end{center}
					that make $M\circ N$ into a symmetric sequence in $\mathcal{C}.$
				\end{definition}
				\begin{definition}[Unit symmetric sequence]
					The unit symmetric sequence $\mathbb{I}$ in the symmetric monoidal category $(\mathcal{C}, \wedge, \mathbb{I}_{\mathcal{C}})$ is given by \begin{center}
						$\mathbb{I}(J)=\mathbb{I}_{\mathcal{C}},$ if $|J|=1,$ and $\mathbb{I}(J)=0$ otherwise; 
					\end{center}
					where $0$ is the initial object  in $\mathcal{C}.$ The map $\mathbb{I}(J)\longrightarrow \mathbb{I}(J')$ induced by a bijection $J\longrightarrow J'$ is the identity morphism on $\mathbb{I}_{\mathcal{C}}$ or $0$ as appropriate.
				\end{definition}
				If the category $(\mathcal{C}, \wedge, \mathbb{I}_{\mathcal{C}})$ is such that $\wedge$ commutes with finite colimits, then the composition product $\circ$ is a monoidal product and $([FinSet, \mathcal{C}], \circ, \mathbb{I} ) $ is a monoidal category, but not symmetric (see \cite[Prop 2.9]{Ching05}). For instance the category $(Ch_{\geq t}(\Bbbk), \otimes, \Bbbk)$ is closed symmetric monoidal. This implies that the tensor product $\otimes$ has a right adjoint, so it preserves all colimits. Therefore $([FinSet, Ch_{\geq t}(\Bbbk)], \circ, \mathbb{I} ) $ is a monoidal category.
				
				In what follows, we equip the category of symmetric sequences $[FinSet, \mathcal{C}]$ (viewed as functor categories) with the projective model structure, where 
				\begin{enumerate}
					\item [-] the  fibrations and the weak equivalences are natural transformations that are objectwise such morphisms in $\mathcal{C}.$ 
				\end{enumerate}
				
				\section{Equivalence of symmetric sequences on Quillen equivalent categories}
				The goal of this section is to show that Quillen equivalences (resp. weak monoidal Quillen pair) between symmetric monoidal categories can be lifted to their associated categories of symmetric sequences.  More precisely,
				let $\lambda: \mathcal{C}\rightleftarrows \mathcal{D}: R $ be a Quillen pair between cofibrantly generated model categories. 
						One can always lift  the pair $(\lambda, R)$ to a Quillen pair
						\begin{center}
							$\overline{ \lambda}: [FinSet, \mathcal{C}]  \rightleftarrows[FinSet, \mathcal{D}]: \overline{R}$
						\end{center}
						preserving in this process some properties of the adjunction  $(\lambda, R)$ that we mention in the following proposition.
						
						\begin{proposition} \label{lift_adjonction_symmetric_sequence}We have the following results.
							\begin{enumerate}
								\item If $\lambda: \mathcal{C} \rightleftarrows \mathcal{D}: R $ is a Quillen equivalence, then so is $(  \overline{ \lambda}, \overline{R}).$
								\item If $(\mathcal{C}, \wedge, \mathbb{I}_{\mathcal{C}})$ and $(\mathcal{D}, \wedge, \mathbb{I}_{\mathcal{D}})$ are symmetric monoidal categories so that the categories $([FinSet, \mathcal{C}], \circ, \mathbb{I})$ and $([FinSet, \mathcal{D}], \circ, \mathbb{I})$ are monoidal categories,  and  if the pair $(\lambda, R)$ is a weak monoidal Quillen pair, then so is the pair $(\overline{\lambda},\overline{R} ).$
							\end{enumerate}
						\end{proposition}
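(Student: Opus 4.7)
The plan is to define $\overline{\lambda}$ and $\overline{R}$ by post-composition, i.e.\ $\overline{R}(N)(J):=R(N(J))$ and $\overline{\lambda}(M)(J):=\lambda(M(J))$. Adjointness is immediate: a natural transformation between such functors is determined by its components on each finite set $J$, so $\mathrm{Hom}_{[FinSet,\mathcal{D}]}(\overline{\lambda}(M),N)\cong \mathrm{Hom}_{[FinSet,\mathcal{C}]}(M,\overline{R}(N))$ follows from the arity-wise adjunction $(\lambda,R)$.

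For part (1), because fibrations, trivial fibrations, and weak equivalences in the projective model structure are defined objectwise, $\overline{R}$ preserves them, so $(\overline{\lambda},\overline{R})$ is a Quillen pair. For the Quillen equivalence I would use the standard criterion: given $M$ projectively cofibrant in $[FinSet,\mathcal{C}]$ and $N$ fibrant in $[FinSet,\mathcal{D}]$, the map $\overline{\lambda}(M)\to N$ is a weak equivalence iff its adjoint $M\to \overline{R}(N)$ is; both conditions are detected arity-wise. The key ingredient is that if $M$ is projectively cofibrant then each $M(\underline{r})$ is cofibrant in $\mathcal{C}$, since projective cofibrant objects in the $\Sigma_r$-equivariant category are retracts of cells built from generating cofibrations of the form $\Sigma_r\cdot i$, which forget to cofibrant objects in $\mathcal{C}$. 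The Quillen equivalence of $(\lambda,R)$ then concludes arity by arity.

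For part (2), I construct the lax monoidal structure on $\overline{R}$ arity-wise: applying the lax structure of $R$ factor-by-factor and composing with the canonical map $\mathrm{colim}\, R(-)\to R(\mathrm{colim}\,-)$ gives
\[
\varphi_{M,N}(J)\colon (\overline{R}M\circ \overline{R}N)(J)=\underset{f\in J/FinSet_0}{\mathrm{colim}} R(M(I))\wedge \bigwedge_{i\in I} R(N(f^{-1}(i)))\longrightarrow R((M\circ N)(J)),
\]
while the unit $\nu\colon \mathbb{I}\to \overline{R}(\mathbb{I})$ uses the unit of $R$ in arity $1$. Coherence axioms follow from those of $R$ together with the universal property of the colimit, establishing condition (a). Condition (c) is immediate because the unit symmetric sequences are concentrated in arity $1$ and initial elsewhere, reducing the requirement to the original hypothesis for $(\lambda,R)$.

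The main obstacle is condition (b): for $M,N$ projectively cofibrant I must show that
\[
\widetilde{\varphi}_{M,N}\colon \overline{\lambda}(M\circ N)\longrightarrow \overline{\lambda}(M)\circ \overline{\lambda}(N)
\]
is a weak equivalence. Using that $\lambda$ preserves colimits, this becomes arity-wise the colimit over $f\in J/FinSet_0$ of the maps $\lambda(M(I)\wedge\bigwedge_i N(f^{-1}(i)))\to \lambda(M(I))\wedge\bigwedge_i \lambda(N(f^{-1}(i)))$. On each summand the arguments $M(I)$ and $N(f^{-1}(i))$ are cofibrant in $\mathcal{C}$ by the argument of part~(1), so iterating condition (b) of the original weak monoidal pair gives a weak equivalence pre-colimit. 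The indexing $J/FinSet_0$ decomposes, according to partitions of $J$, into a finite coproduct of groupoids of the form $B(\Sigma_{r_1}\times\cdots\times\Sigma_{r_k})$, so the colimit reduces to a coproduct of $(\Sigma_{r_1}\times\cdots\times\Sigma_{r_k})$-coinvariants of the relevant tensor products. The technical heart is then to verify that projective cofibrancy of $M$ and $N$ makes the relevant symmetric-group actions free enough for these coinvariants to preserve the arity-wise weak equivalences, which is the analogue, in this setting, of the Schwede--Shipley pushout-product style argument.
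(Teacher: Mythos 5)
Your part (1) and your construction of the lax monoidal structure on $\overline{R}$ match the paper's own proof: the adjunction and the Quillen pair are handled arity-wise, and the structure maps $\varphi^{\overline{R}}_{M,N}$ are built exactly as you describe, from iterated lax structure maps of $R$ followed by passage to the colimit over $J/FinSet_0$. The gap is in condition (b), precisely at the point you yourself label ``the technical heart.'' You correctly reduce to showing that the arity-wise weak equivalences $\overline{\lambda}(M,N)(f)\longrightarrow(\overline{\lambda}M,\overline{\lambda}N)(f)$ survive the colimit over $J/FinSet_0$, you observe that this colimit is a coproduct of coinvariants under products of symmetric groups, and you then assert that projective cofibrancy of $M$ and $N$ makes the actions ``free enough'' for the coinvariants to preserve weak equivalences --- but you never prove this assertion, and it is the entire difficulty of the statement. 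Coinvariants are not homotopy invariant in a general symmetric monoidal model category $\mathcal{D}$ (no characteristic-zero, simplicial, or other special structure on $\mathcal{D}$ is assumed here), and projective cofibrancy of $M$ and $N$ separately does not, without a genuine argument, yield equivariant (projective) cofibrancy of the objects $M(I)\wedge\bigwedge_{i\in I}N(f^{-1}(i))$ with their permutation actions; establishing that is essentially as hard as the proposition itself.

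The paper closes this gap by never taking coinvariants of general objects: it runs a cellular induction on $M$. For a free cell the circle product has the explicit form $\bigl(K\otimes\mathbb{I}(\Sigma_r)\circ B\bigr)(J)\cong\bigvee_{J_1\amalg\cdots\amalg J_r}K\wedge B(J_1)\wedge\cdots\wedge B(J_r)$, i.e.\ the $\Sigma_r$-quotient is an honest coproduct, so condition (b) for $(\lambda,R)$ applies summand by summand with no equivariance issue. The induction over cell attachments then needs the fact that $-\circ B$ with $B$ cofibrant carries cofibrations to cofibrations (Lemma \ref{stable_cofibration_lemma}, itself a consequence of the pushout-product Lemma \ref{Cofibration_circle_product}), which guarantees that the squares obtained by applying $\overline{\lambda}(-)\circ\overline{\lambda}(B)$ and $\overline{\lambda}(-\circ B)$ to a cell attachment are homotopy pushouts; the weak equivalences are then glued across these pushouts, and a retract argument handles an arbitrary cofibrant $M$. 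To complete your proposal you would need either to prove these two lemmas and run this induction, or to prove an equivariant-cofibrancy statement of comparable strength; neither follows formally from what you have written.
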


		In order to give the proof of this result, we want to be certain that the right composition of a cofibration, in the category of symmetric sequences, with a cofibrant object is again a cofibration. This property we describe is itself a consequence of the pushout product axiom that we prove in the next lemma. We think it also appears in various places in the literature: in \cite[Thm 6.1 ]{Harper} when the based category $\mathcal{C}$ is closed symmetric monoidal, in \cite{Rezk} for the based category $\mathcal{C}=sSet.$
				
				\begin{mylemma}\label{Cofibration_circle_product}
					Let $(\mathcal{C}, \wedge, \mathbb{I}_{\mathcal{C}})$ be a cofibrantly generated symmetric monoidal model category . If $i: A\rightarrowtail  A'$ and $j: B\rightarrowtail  B'$ are cofibrations in $[Finset, \mathcal{C}]$ such that $B$ and $B'$ are cofibrant, then the natural morphism
					\begin{center}
						$(i_*, j_*): A'\circ B \underset{A\circ B}{\amalg} A\circ B'\longrightarrow A'\circ B'$
					\end{center}
					is a cofibration. In addition if $i$ or $j$ is acyclic, then so is $(i_*, j_*).$
					
				\end{mylemma}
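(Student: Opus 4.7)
The plan is to reduce the pushout-product axiom for the composition product $\circ$ on $[FinSet,\mathcal{C}]$ to the pushout-product axiom for $\wedge$ in $\mathcal{C}$ (which holds because $\mathcal{C}$ is a cofibrantly generated symmetric monoidal model category). First I would reduce to the case where $i$ and $j$ are generating cofibrations of the projective model structure. The class of pairs $(i,j)$ for which $(i_*, j_*)$ is a (resp.\ acyclic) cofibration is closed under cobase change, transfinite composition, coproducts, and retracts in each variable separately, since $A\circ -$ and $-\circ B$ preserve the relevant colimits when the other argument is fixed (using that $\wedge$ in $\mathcal{C}$ preserves colimits in each variable, a consequence of $\mathcal{C}$ being closed). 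The standard small-object/cell-attachment reduction then lets me assume $i = F_r(u)$ and $j = F_s(v)$, where $F_r \colon \mathcal{C} \to [FinSet,\mathcal{C}]$ denotes the free symmetric sequence functor in arity $r$ (left adjoint to evaluation at $\underline{r}$) and $u,v$ are generating cofibrations of $\mathcal{C}$. After this reduction the cofibrancy assumption on $B$ and $B'$ translates into cofibrancy of the source and target of $v$ in $\mathcal{C}$.

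Next I would compute $F_r(X)\circ F_s(Y)$ explicitly from the definition of $\circ$. A direct inspection shows it is concentrated in arity $rs$, where its value is a free $\Sigma_{rs}$-coproduct of copies of $X\wedge Y^{\wedge r}$ modulo the natural wreath-product action of $\Sigma_s \wr \Sigma_r$. Consequently, in the generating case, $(i_*,j_*)$ identifies with $F_{rs}$ (suitably interpreted via induction along $\Sigma_s\wr\Sigma_r\hookrightarrow \Sigma_{rs}$) applied to the pushout product $u\,\Box\,v^{\Box r}$ in $\mathcal{C}$, where $v^{\Box r}$ denotes the $r$-fold iterated pushout product of $v$ with itself, equipped with its canonical $\Sigma_r$-action permuting the tensor factors. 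Iterated application of the pushout-product axiom in $\mathcal{C}$---using cofibrancy of the source and target of $v$ at each of the $r$ steps---shows that $v^{\Box r}$ is a cofibration; a final application shows $u\,\Box\,v^{\Box r}$ is a cofibration, acyclic if $u$ or $v$ is.

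The main obstacle is keeping track of the $\Sigma_r$-equivariance through the iteration and confirming that applying $F_{rs}$ to the resulting $\Sigma_r$-equivariant cofibration in $\mathcal{C}$ yields a projective cofibration of symmetric sequences. The induction appearing in $F_{rs}(u\,\Box\,v^{\Box r})$ is along the inclusion $\Sigma_s\wr\Sigma_r\hookrightarrow \Sigma_{rs}$, so the underlying map gets smashed with a free $\Sigma_{rs}$-set; freeness of this action makes the induction compatible with the projective model structure and dispenses with any \emph{admissibility} requirement for the $\Sigma_r$-action on $v^{\Box r}$ itself. The hypothesis that both $B$ and $B'$ are cofibrant (not just $B$) is used precisely because the intermediate stages of the pushout-product iteration involve mixed smash products of the source and target of $v$, and each stage must consist of cofibrant objects for the pushout-product axiom to be invoked at the next step.
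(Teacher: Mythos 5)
There is a genuine gap at your very first step. You reduce both $i$ and $j$ to generating cofibrations on the grounds that the class of pairs $(i,j)$ for which $(i_*,j_*)$ is a (resp.\ acyclic) cofibration is ``closed under cobase change, transfinite composition, coproducts, and retracts in each variable separately, since $A\circ-$ and $-\circ B$ preserve the relevant colimits when the other argument is fixed.'' This is true for $-\circ B$ but false for $A\circ-$: the composition product is polynomial, not linear, in its right-hand variable, since $(M\circ N)(J)$ involves smash products of several values of $N$ at once, so $A\circ-$ preserves neither pushouts nor coproducts. This is precisely the obstruction the paper's introduction points to (``the circle product is not symmetric and does not distribute over colimits''), and it is why Schwede--Shipley-type cell inductions cannot be run in the second variable. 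Consequently your reduction of $j$ to a single generating cofibration $F_s(v)$ is unavailable, and with it the claimed translation of the hypothesis ``$B$ and $B'$ cofibrant'' into cofibrancy of the source and target of $v$. The paper's proof never inducts on $j$ at all: the cellular induction and retract argument are carried out only on $i$, with $j\colon B\rightarrowtail B'$ kept arbitrary throughout. In the generating case it computes $\bigl(F_r(K)\circ B\bigr)(J)\cong K\wedge T^r_B(J)$ with $T^r_B(J)=\bigvee_{J_1\amalg\cdots\amalg J_r}B(J_1)\wedge\cdots\wedge B(J_r)$, and the aritywise cofibrancy of $B$ and $B'$ is used exactly there, to see that $T^r_B(J)\to T^r_{B'}(J)$ is a cofibration (changing one smash factor at a time), after which a single application of the pushout-product axiom in $\mathcal{C}$ concludes. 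That global treatment of $j$ is not recoverable by your route.

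A second, more local error: even in the case where both maps are generating, your identification of $(i_*,j_*)$ is off. With $i=F_r(u)$, $u\colon K\to K'$, and $j=F_s(v)$, $v\colon Y\to Y'$, the corner map is the induction along $\Sigma_s\wr\Sigma_r\hookrightarrow\Sigma_{rs}$ of $u\,\Box\,(v^{\wedge r})$, the pushout product of $u$ with the $r$-fold smash \emph{power} $v^{\wedge r}\colon Y^{\wedge r}\to Y'^{\wedge r}$ --- not with the iterated pushout product $v^{\Box r}$, whose source is the punctured-cube colimit and which never appears here, because the summand $A\circ B'$ contributes the full smash of targets $K\wedge Y'^{\wedge r}$. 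The distinction matters for your accounting of hypotheses: $v^{\Box r}$ is a cofibration by the pushout-product axiom alone, with no cofibrancy assumptions (contrary to what you assert about the iteration), whereas $v^{\wedge r}$ is a cofibration only because the source and target of $v$ are cofibrant --- which is where the cofibrancy hypothesis of the lemma actually lives, matching the paper's use of it. Your observation that the freeness of the wreath-product action untwists the equivariance, so that no admissibility condition on $\Sigma_r$-actions is needed, is correct and consistent with the paper's generating cells $K\otimes\mathbb{I}(\Sigma_r)$, but it does not repair the failed reduction in the second variable.
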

				
				\begin{proof} We divided the proof is three steps. We consider the case of generating cofibration,  of cellular morphism, and finally of cofibration.  
					\begin{enumerate}
						\item Suppose that $K\otimes \mathbb{I}(\Sigma_r) \overset{i}{\rightarrowtail } K'\otimes \mathbb{I}(\Sigma_r) $ is constructed from a generating cofibration $K \rightarrowtail K'$ of $\mathcal{C}$ ,  and let $J$ be a finite set. We have  
						\begin{align*}
						(K\otimes \mathbb{I}(\Sigma_r)\circ B)(J)\cong & \underset{J_1\amalg ... \amalg J_r}{\bigvee} K\wedge B(J_1) \wedge ... \wedge B(J_r)\\
						\cong &K\wedge T^r_B(J)
						\end{align*}
						where,  $T^r_B(J)=  \underset{J_1\amalg ... \amalg J_r}{\bigvee} B(J_1) \wedge ... \wedge B(J_r).$
						
						Since the objects $B(J_i)$ and $B'(J_i)$ are cofibrant and  $\mathcal{C}$ is a symmetric monoidal model category, one deduces that $T_B^r(J) \longrightarrow T_{B'}^r(J)$ is a cofibration in $\mathcal{C}$ and is an acyclic cofibration if $j$ is. It then follows that the the induced morphism  \begin{center}
							$ K'\wedge T_{B}^r(J) \underset{K\wedge T_{B}^r(J)}{\amalg} K\wedge T_{B'}^r(J) \longrightarrow K'\wedge T_{B'}^r(J)$
						\end{center} is a cofibration in $\mathcal{C}$  and an acyclic cofibration if $i$ or $j$ is. This gives  the result in this specific case of $i.$  In addition, Since the functor $-\circ -$ preserves left colimits, this result generalizes to any cofibration of the form $\underset{\alpha}{\bigvee}K_\alpha\otimes \mathbb{I}(\Sigma_\alpha) \overset{i}{\rightarrowtail } \underset{\alpha}{\bigvee} K'_\alpha\otimes \mathbb{I}(\Sigma_\alpha).$ 
						
						\item We assume now that $i: A\longrightarrow A'$ is a cellular morphism presented by the $\beta$-sequence ($\beta$ is an ordinal) :\begin{center}
							$A=A<0> \longrightarrow A<1> \longrightarrow ... \longrightarrow A<\rho> \longrightarrow ... (\rho < \beta)$
						\end{center}
						where $A'= \underset{\rho}{\text{colim}} A<\rho>,$ with  pushout diagrams (cell attachment): 
						\begin{center}
							$\xymatrix{ K=\underset{\alpha}{\bigvee}K_\alpha\otimes \mathbb{I}(\Sigma_\alpha) \ar[r] \ar[d]& A<\rho> \ar[d]^{}\\
								K'=	\underset{\alpha}{\bigvee}K'_\alpha\otimes \mathbb{I}(\Sigma_\alpha) \ar[r]& A<\rho +1>
							}$
						\end{center}
						One deduces from this diagram the following pushout:
						\begin{center}
							$\xymatrix{K'\circ B \underset{K\circ B}{\amalg} K\circ B' \ar[r] \ar@{>->}[d]& A<\rho+1>\circ B \underset{A<\rho>\circ B}{\amalg} A<\rho> \circ B'  \ar[d]^{i_\rho}\\
								K'\circ B' \ar[r]& A<\rho+1> \circ B'
							}$
						\end{center}
						where the left vertical map is a cofibration (or acyclic cofibration) using $1.$ One deduces that the right vertical map is also a cofibration (or acyclic cofibration).

						We deduce that we have the composite 	
						\begin{center}
							$    \xymatrix{  A<\rho+1>\circ B \underset{A<\rho-1>\circ B}{\amalg} A<\rho-1> \circ B' \ar[d]^{\cong}& \\ 
								A<\rho+1>\circ B \underset{A<\rho>\circ B}{\amalg} A<\rho> \circ B\underset{A<\rho-1>\circ B}{\amalg} A<\rho-1> \circ B'    \ar@{>->}[d] ^{i_{\rho-1}}&\\
								A<\rho+1>\circ B \underset{A<\rho>\circ B}{\amalg} A<\rho> \circ B' \ar@{>->}[d]^{i_p} \\ 
								A<\rho+1> \circ B'&
							} $ 
						\end{center} 	
						We then deduce the result by induction on $\rho.$ 
						
						\item Finaly let us assume that $i$ is an arbitrary cofibration. The map $i$ is by definition the retract of a cellular map $X\overset{i'}{\longrightarrow }Y.$  Using the universal property of colimits, one deduces that $(i_*, j_*): A'\circ B \underset{A\circ B}{\amalg} A\circ B'\longrightarrow A'\circ B'$ is a retract of $Y\circ B \underset{X\circ B}{\amalg} X\circ B'\longrightarrow Y\circ B'$ which is a cofibration according to $2.$ Therefore we deduce that $(i_*, j_*)$ is also a cofibration, and is acyclic if one of $i$ or $j$ is.
					\end{enumerate}
				\end{proof}
				
				\begin{mylemma} \label{stable_cofibration_lemma}
					Let $(\mathcal{C}, \wedge, \mathbb{I}_{\mathcal{C}})$ be a symmetric monoidal model category, and cofibrantly generated. If $i: A\rightarrowtail  A'$ is a cofibration in $[Finset, \mathcal{C}]$ , and $B'$ is a cofibrant symmetric sequence in $\mathcal{C},$ then the natural morphism
					\begin{center}
						$  A\circ B'\longrightarrow A'\circ B'$
					\end{center}
					is a cofibration. 
				\end{mylemma}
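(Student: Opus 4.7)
I would prove this lemma by repeating the three-step argument of Lemma \ref{Cofibration_circle_product}, adapted to the simpler setting of a single cofibration in place of a pushout product. Alternatively, the result can be extracted as a corollary of Lemma \ref{Cofibration_circle_product} applied to $i: A \rightarrowtail A'$ together with the cofibration $j: 0 \rightarrowtail B'$ (the zero symmetric sequence is cofibrant as an initial object, so $j$ is a cofibration because $B'$ is); the resulting pushout product $A'\circ 0 \amalg_{A\circ 0} A\circ B' \to A'\circ B'$ agrees, after computing that $M\circ 0$ vanishes in positive arities and equals $M(\emptyset)$ in arity zero, with $A\circ B' \to A'\circ B'$ in positive arities, and the arity-zero part is just $i_\emptyset$, which is a cofibration in $\mathcal{C}$ since $i$ is a projective cofibration. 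The direct three-step approach is arguably cleaner, and is the one I sketch below.

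In the first step, I reduce to a generating cofibration of the form $i: \bigvee_\alpha K_\alpha\otimes\mathbb{I}(\Sigma_\alpha) \rightarrowtail \bigvee_\alpha K'_\alpha\otimes\mathbb{I}(\Sigma_\alpha)$ built from generating cofibrations $K_\alpha \rightarrowtail K'_\alpha$ of $\mathcal{C}$. Evaluating $i\circ B'$ on a finite set $J$ yields, exactly as in step $1$ of the previous proof, a coproduct of maps of the form $K_\alpha \wedge T^{|\alpha|}_{B'}(J) \to K'_\alpha \wedge T^{|\alpha|}_{B'}(J)$, where $T^r_{B'}(J) = \bigvee_{J_1\amalg\cdots\amalg J_r = J} B'(J_1)\wedge\cdots\wedge B'(J_r)$. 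Since $B'$ is cofibrant and $\mathcal{C}$ is a symmetric monoidal model category, each $T^r_{B'}(J)$ is cofibrant in $\mathcal{C}$, so the displayed map is a cofibration in $\mathcal{C}$ (and acyclic when $i$ is) by the pushout product axiom in $\mathcal{C}$ applied to $K_\alpha\rightarrowtail K'_\alpha$ and $0\rightarrowtail T^{|\alpha|}_{B'}(J)$.

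In the second step I pass to cellular morphisms by transfinite induction: since $-\circ B'$ preserves the pushouts and transfinite compositions appearing in cellular presentations (being a left adjoint in the first variable), each cell attachment for $i$ is sent to a pushout whose attaching map is a cofibration by step $1$, and the resulting colimit is a cofibration. The third step handles an arbitrary cofibration $i$ by the retract argument: write $i$ as a retract of a cellular morphism $i'$, apply $-\circ B'$, and conclude that $i\circ B'$ is a retract of the cofibration $i'\circ B'$. The only genuinely delicate point is verifying that $-\circ B'$ preserves the relevant colimits in step $2$ and keeping track of the $\Sigma$-equivariance when identifying $(i\circ B')(J)$ arity-wise in step $1$; both follow directly from Definition \ref{composition_product}.
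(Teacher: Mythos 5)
Your proposal is correct, and it in fact contains the paper's proof as your ``alternative'' route: the paper disposes of this lemma in one line, by applying Lemma \ref{Cofibration_circle_product} to the special case $j: \emptyset \rightarrowtail B'$, exactly as you suggest. Your preferred direct three-step argument is also valid --- it is simply the proof of Lemma \ref{Cofibration_circle_product} re-run with the second variable held fixed, so it buys self-containedness at the price of repetition, and it inherits the same implicit appeals as the paper's own argument (that $-\circ B'$ preserves the relevant colimits, and that the arity-wise identifications in step $1$ respect the $\Sigma$-actions). The more interesting point is one you half-noticed in the corollary route and which the paper glosses over entirely: with $j: \emptyset \rightarrowtail B'$, the conclusion of Lemma \ref{Cofibration_circle_product} is that $A'\circ \emptyset \amalg_{A\circ\emptyset} A\circ B' \longrightarrow A'\circ B'$ is a cofibration, and this is not literally the map $A\circ B'\longrightarrow A'\circ B'$, because $M\circ\emptyset$ is not the initial symmetric sequence: as you compute, it is $M(\emptyset)$ concentrated in arity zero. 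To close the gap one should note that $A\circ\emptyset \longrightarrow A'\circ\emptyset$ is the arity-zero concentration of $i_\emptyset$, hence a projective cofibration (projective cofibrations are in particular objectwise cofibrations, and concentrating an object of $\mathcal{C}$ in arity zero is a left Quillen functor); pushing this map out along $A\circ\emptyset \longrightarrow A\circ B'$ shows that $A\circ B' \longrightarrow A'\circ \emptyset \amalg_{A\circ\emptyset} A\circ B'$ is a cofibration, and composing with the pushout-product cofibration yields the lemma. Your remark that ``the arity-zero part is just $i_\emptyset$'' identifies the right ingredient but is not by itself a conclusion --- being a cofibration in the projective structure is not an arity-by-arity condition --- so either spell out this pushout-and-compose step, or keep your direct three-step argument, which avoids the issue altogether and is, in that one respect, more watertight than the paper's own one-liner.
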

				\begin{proof}
					We apply the result of Lemma \ref{Cofibration_circle_product} in specific case $j: \emptyset \rightarrowtail  B'.$
				\end{proof}

				\begin{Proof of proposition Lif adjunction symmetric sequences}

					\begin{enumerate}
						\item Let $C$ be a cofibrant object in $[Finset, \mathcal{C}] , D$ a fibrant object in $[Finset, \mathcal{D}] $ and a weak equivalence $ \overline{ \lambda}(C)\overset{\simeq}{\longrightarrow} D.$ 
						
						Then for any finite set $J,$ one has the levelwise weak equivalence $  \lambda( C(J))\overset{\simeq}{\longrightarrow} D(J).$
						Since $C(J)$ is cofibrant and $D(J)$ fibrant, its adjoint $ C(J)\overset{\simeq}{\longrightarrow} R( D(J))$ is also a weak equivalence in $\mathcal{D}.$ Therefore one has the weak equivalence $ C\overset{\simeq}{\longrightarrow}\overline{R}(D).$ 
						\item Let $A$ and $B$ be  two symmetric sequences in $\mathcal{D}.$ We have
						
						$( \overline{R}(A), \overline{R}(B))(f:J\longrightarrow I)= R(A(I))\wedge\underset{i\in I}{\bigwedge} R( B(f^{-1}(i)))$ 
						
						\ \ \ \ \ \ \ \ \ \ \ \ \ \ \ \ \ \ \ \ \ \ \ \ \ \ \ \ \ \ \ \ \ \ \	$\overset{\varphi_{A,B}^{R}}{\longrightarrow}  R(A(I)\wedge\underset{i\in I}{\bigwedge} B(f^{-1}(i)))=\overline{R}(A,B)(f:J\longrightarrow I)$
						
						where $\varphi_{A,B}^{R}$ is the natural iteration of the lax monoidal structure morphism associated to $R.$  
						
						The adjunction of this morphism gives 
						
						\begin{center}
							$ \overline{\lambda}(\overline{R}(A), \overline{R}(B))(f:J\longrightarrow I) 	\longrightarrow (A,B)(f:J\longrightarrow I)$
						\end{center}
						By applying $colim$ one get the map
						
						\begin{center}
							$ \overline {\lambda} \underset{f\in J/FinSet_0}{colim}(\overline{R}(A), \overline{R}(B))(f) 	\longrightarrow \underset{f\in J/FinSet_0}{colim}(A,B)(f)$
						\end{center}
						and it adjoint gives what we claim to be the lax monoidal structure morphism 
						\begin{center}
							$\varphi_{A,B}^{\overline {R}}: \overline{R}(A)\circ \overline{R}(B)	\longrightarrow \overline {R} (A\circ B) $
						\end{center}
						Consider now that $(\lambda, R)$ is a weak monoidal Quillen pair. Let $A,B$ be two cofibrant objects in $[Finset, \mathcal{C}].$
						
						The adjoint of the composition 	$ A\circ B\longrightarrow \overline{R}\overline{\lambda}A\circ \overline{R}\overline{\lambda}B\overset{\varphi_{A,B}^{\overline {R}}}{\longrightarrow}\overline{R}(\overline{\lambda}(A)\circ \overline{\lambda}(B))$
						is obtained, according to our above construction, from the collection of maps: for $f:J\longrightarrow I,$
						\begin{align}\label{Levelwise_WE_composition_equation}
						\overline{\lambda} (A, B)(f)\longrightarrow& \overline{\lambda}( \overline{R}\overline{\lambda}A, \overline{R}\overline{\lambda}B)(f) \longrightarrow  (\overline{\lambda}(A), \overline{\lambda}(B))(f)
						\end{align}
						and these maps are  weak equivalences by assumption. We would like to conclude that when we apply the colimit functor we obtain the weak equivalence $\overline{\lambda} (A\circ  B) \longrightarrow \overline{\lambda}(A)\circ \overline{\lambda}(B).$ 
						\begin{enumerate}
							\item [-] We assume that $ A= \underset{\rho}{\text{colim}} A<\rho>,$ with
							single cell attachment
							\begin{center}
								$\xymatrix{ K=\underset{\alpha}{\bigvee}K_\alpha\otimes \mathbb{I}(\Sigma_\alpha) \ar[r] \ar[d]& A<\rho> \ar[d]^{}\\
									K'=	\underset{\alpha}{\bigvee}K'_\alpha\otimes \mathbb{I}(\Sigma_\alpha) \ar[r]& A<\rho +1>
								}$
							\end{center}  
							When we apply the functor $-\circ B$ and the functor $\overline{\lambda},$ we obtain the following cube where the top and back faces are  the pushout diagrams
							
							\begin{center}
								\begin{tikzcd}[row sep=scriptsize, column sep=scriptsize]
									&\overline{\lambda}( K\circ B) \arrow[dl,  "\simeq"] \arrow[rr] \arrow[dd, tail]  & & \overline{\lambda}(  A<\rho>\circ B ) \arrow[dl, "\simeq "] \arrow[dd, ] \\ \overline{\lambda}( K) \circ \overline{\lambda}(B) \arrow[dd, tail, ""] \arrow[rr, crossing over]  & & \overline{\lambda}( A<\rho>) \circ \overline{\lambda}(B) \arrow[dd, ]  \\
									& \overline{\lambda}( K' \circ B) \arrow[dl, "\simeq "] \arrow[rr] & & \overline{\lambda}(  A<\rho+1>\circ B )  \arrow[dl, "h "] \\
									\overline{\lambda}( K') \circ \overline{\lambda}(B)  \arrow[rr] & & \overline{\lambda}( A<\rho +1>)  \circ \overline{\lambda}(B) \\
								\end{tikzcd}
							\end{center}
							where the horizontal  weak equivalences are induced by equation $( \ref{Levelwise_WE_composition_equation}).$ The left vertical cofibrations are induced by Lemma \ref{stable_cofibration_lemma} since $\overline{\lambda}(B)$ is cofibrant and $\overline{\lambda}$ preserves cofibrations. The front and back faces are then homotopy pushout squares, and one deduces that $h$ is a weak equivalence.
							\item[-] If $A$ is any arbitrary cofibrant object, then it is a retract of a cell object $X$, and one
							deduce easily that  $\overline{\lambda} (A\circ  B) \longrightarrow \overline{\lambda}(A)\circ \overline{\lambda}(B)$  is as weak equivalence as the retract of the weak equivalence $\overline{\lambda} (X\circ  B) \longrightarrow \overline{\lambda}(X)\circ \overline{\lambda}(B).$ 
						\end{enumerate}	
					\end{enumerate}

						\end{Proof of proposition Lif adjunction symmetric sequences}
				
				\section{Simplicial operads}
				\subsection{The semi-model category of operads}
				
				Let $(\mathcal{C}, \wedge, \mathbb{I}_\mathcal{C})$ be a cofibrantly generated symmetric monoidal model category  with cofibrant unit $\mathbb{I}_\mathcal{C}.$ 	We denote by $Op_\mathcal{C}$ the category of connected operads over $\mathcal{C}.$ These are operads $P$ so that $P(0)=\emptyset$ (the initial object) and $P(1)=\mathbb{I}_\mathcal{C}.$ 
				
				There are the following adjoint functors: \begin{center}
					$ F : [FinSet, \mathcal{C}] \rightleftarrows Op_\mathcal{C}: U $
				\end{center}
				where $F$ denotes the free operad functor, and $U$ is the forgetful functor. 
				
				To define a model category structure on $Op_\mathcal{C},$ the usual method is to transfer the model structure from $[FinSet, \mathcal{C}]$ to the category $Op_\mathcal{C}$ using the Kan's Lifting Theorem (see \cite[11.3.2]{Hir03}). Unfortunately, it is often the case that the hypothesis of the Kan theorem are not met and we get on $Op_\mathcal{C}$ a so called "semi-model structure" which is a model structure with weaker requirements for the lifting and the factorization axioms.

				\begin{definition}[semi-model category,  \cite{Hovey98}, \cite{Spitzweck}, \cite{Fresse2008BenoitFM}]
					The structure of a semi-model structure consists of a  category $\mathcal{M}$ equipped with classes of weak-equivalences, cofibrations and fibrations so that the axioms M1, M2, M3
 of model categories hold, but where the lifting axiom M4 and the factorization axiom M5 are replaced by the weaker requirements:
 
 	\begin{enumerate}
 		
 		\item[\text{M4'}.] The fibrations (resp. acyclic fibrations) have the right lifting property with respect to acyclic cofibrations (resp. cofibrations). 
 		\item[\text{M5'}.]
 		\begin{enumerate}
 			\item [-] Any morphism $f:A\longrightarrow B$ such that $A$ is cofibrant has a factorization $f=pi, $ where $i$ is a cofibration and $p$ is an acyclic fibration.
 			\item[-]  Any morphism $f:A\longrightarrow B$ such that $A$ is cofibrant has a factorization $f=qj, $ where $j$ is an acyclic  cofibration and $q$ is a fibration.
 		\end{enumerate}
 		
 			\item[\text{M0'}.] The initial object of $\mathcal{M}$
 		is cofibrant				
 	\end{enumerate}

				\end{definition}
				The semi-model category defined on the category $Op_\mathcal{C}$ is given by (see \cite[Thm 3]{Spitzweck}):
				\begin{enumerate}
					\item [-] Fibrations (resp. weak equivalences) are levelwise fibrations(resp. weak equivalences) in the underlying category $\mathcal{C}.$
					\item[-] Cofibrations are morphisms with $\Sigma_*$-cofibrant domain which have  the left lifting property with respect to trivial fibrations.
				\end{enumerate}
				 In the particular case that  $\mathcal{C}=Ch_{\geq t}(\Bbbk) , t\in \mathbb{N}\cup\{-\infty\} ,$ with   $\Bbbk$ be a field of characteristic $0,$ it is proved in \cite[ Thm 3.1]{Berger02axiomatichomotopy} that this semi-model structure is a strict model structure on $Op_\mathcal{C}.$
				 
				 There is the notion of Quillen adjunction in the context of semi-model categories (see \cite[$Â§$ 12.1.8, p.191]{BF09}).
				 \begin{definition}[Quillen adjunction between semi-model categories]
				 	Let $\lambda: \mathcal{C} \rightleftarrows \mathcal{D}: R$ be an adjunction between semi-model categories.
				 	\begin{enumerate}
				 		\item The pair $(\lambda, R)$ is a Quillen adjoint pair if $R$ preserves fibrations and trivial fibrations. 
				 		\item A Quillen  pair $(\lambda, R)$ is a Quillen equivalence  if we have further:
				 	\begin{enumerate}
				 		\item For every cofibrant object $X \in \mathcal{C},$ the composite 
				 		\begin{center}
				 			$ X \overset{\eta_X}{\longrightarrow} R\lambda(X) \overset{R(i)}{\longrightarrow} R(\lambda(X)^f)$
				 		\end{center}
				 		is a weak equivalence, where $\eta_X$ refers to the adjunction unit, and $i$ is a fibration resolution that arises from the factorization $\lambda(X)\overset{i}{\underset{\simeq}{\rightarrowtail}}\lambda(X)^f \twoheadrightarrow *$ of the terminal morphism $\lambda(X) \longrightarrow *.$
				 		
				 		\item For every cofibrant object $Y\in \mathcal{D},$ the composite 
				 		\begin{center}
				 			$ \lambda(R(Y)^c) \overset{\lambda(j)}{\longrightarrow}  \lambda(R(Y))  \overset{\varepsilon_Y}{\longrightarrow} Y$
				 		\end{center}
				 		is a weak equivalence, where $\varepsilon_Y$ refers to the adjunction counit, and $j$ is a fibration resolution that arises from the factorization $*  \rightarrowtail R(Y)^c \overset{j}{\underset{\simeq}{\twoheadrightarrow}} R(Y)$ of the initial morphism $*\longrightarrow R(Y).$
				 	\end{enumerate}	
				 	
				 	\end{enumerate}
				 \end{definition}

				\subsection{Cofibrant resolution of chain operads}
				We assume in this part that the field $\Bbbk$ is of characteristic $0.$
				Let $\mathcal{C}=Ch_{\geq t}(\Bbbk) , t\in \mathbb{N}\cup\{-\infty\} $ with the cofibrantly generated model structure defined in $§ 1.1.$
			
		Given a connected chain operad $P,$ since in this case $P$ is a $\Sigma_*$-cofibrant operad, then 	it is showed in $\cite[\text{Thm } 5.1.  \text{and  Thm } 8.5.4. ]{BergerMoerdjik}$ that the reduced $W$-construction $W^{red}(N_*^\Bbbk (\triangle^1) , P )\cong B^cB(P)$ associated to $P$ is its cofibrant model. In particular, there is a quasi-isomorphism
				\begin{center}
					$B^cB(P) \overset{\simeq}{\longrightarrow} P.$
				\end{center}

				\subsection{Realization of simplicial operads}
				We assume in this part that the field $\Bbbk$ is of characteristic $0.$
				We will be working here in the specific context where $\mathcal{C}= Ch_{\geq t}(\Bbbk) , t\in \mathbb{N}\cup\{-\infty\}.$ 
				
				In this section, we make a connection with the cofree construction on coalgebras of $§ \ref{Cofree_section}.$ We consider the geometric realization functors $\lvert -\lvert _\mathcal{C}$ (in the category $\mathcal{C}$)  and  $\lvert -\lvert _{op_\mathcal{C}}$ (in the associated category of connected operads). We establish the following comparison theorem.
							\begin{theorem}\label{Equivalence_between_realization_functors}
								Let $P_\bullet$ be a simplicial operad on chain complexes $\mathcal{C}=Ch_{\geq t}(\Bbbk) ,  t\in \mathbb{N}\cup\{-\infty\} .$  Then for any integer $r\geq 0, $ there is a quasi-isomorphism
								\begin{center}
									$\Gamma : \arrowvert B^c(B(P_\bullet))(\underline{r})\arrowvert_\mathcal{C} \overset{\simeq}{\longrightarrow}  \arrowvert P_\bullet\arrowvert_{ Op_\mathcal{C}}(\underline{r}).$
								\end{center}
							\end{theorem}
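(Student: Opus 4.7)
The plan is to use the explicit cosimplicial frame of operads built from cofree cocommutative coalgebras in Section \ref{Cofree_section}, and to reduce the comparison of the two realizations, arity by arity, to a comparison of this operadic frame with the standard cosimplicial frame of $\mathcal{C}$.

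First, I would fix a cosimplicial frame $\Omega^\bullet$ for the unit operad in $Op_\mathcal{C}$ obtained by applying the cofree cocommutative coalgebra functor $C(-)$ of Theorem \ref{Cofree Cocommutative_Coalgebra} to the normalized chain complex $N_*^\Bbbk(\Delta^\bullet)$ of the cosimplicial standard simplex. Since $B^cB(-)$ yields a $\Sigma_*$-cofibrant resolution of every connected operad, the simplicial operad $B^cB(P_\bullet)$ is levelwise cofibrant, so that the homotopy realization $|P_\bullet|_{Op_\mathcal{C}}$ may be modeled, up to weak equivalence in each arity, by the coend $\int^{[n]\in\Delta} B^cB(P_n) \otimes_{Op_\mathcal{C}} \Omega^n$. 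In the underlying category $\mathcal{C}$, the realization $|B^cB(P_\bullet)(\underline{r})|_\mathcal{C}$ is computed using the standard cosimplicial frame $N_*^\Bbbk(\Delta^\bullet)$.

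Next, I would construct the comparison map $\Gamma$ from the natural morphism of cosimplicial chain complexes $N_*^\Bbbk(\Delta^\bullet) \to \Omega^\bullet(\underline{r})$ coming directly from the cofree coalgebra construction and the unit $N_*^\Bbbk(\Delta^n) \to UC(N_*^\Bbbk(\Delta^n))$. Passing to the coend and exploiting the fact that the cells of $B^cB(P_n)$ are freely generated from symmetric sequences, this morphism induces
\[
\Gamma : |B^cB(P_\bullet)(\underline{r})|_\mathcal{C} \longrightarrow |P_\bullet|_{Op_\mathcal{C}}(\underline{r}).
\]

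Finally, to prove $\Gamma$ is a quasi-isomorphism, I would argue via the skeletal filtration on the simplicial degree of the coend and reduce, using the cofibrancy of each $B^cB(P_n)(\underline{r})$ as a $\Sigma_r$-equivariant chain complex, to checking that at every simplicial level $n$ and every arity $r$, the component $N_*^\Bbbk(\Delta^n) \to \Omega^n(\underline{r})$ is a quasi-isomorphism. This arity-wise statement is the main obstacle and is exactly where Theorem \ref{C(V)_contractible_lemma} enters: since the augmentation ideal of $N_*^\Bbbk(\Delta^n)$ is acyclic (the standard simplices being contractible), the cofree cocommutative coalgebra $C$ detects this acyclicity and produces a coalgebra whose arity-$r$ component is weakly equivalent to the chain-complex frame $N_*^\Bbbk(\Delta^n)$. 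Combining this arity-wise equivalence with the skeletal filtration argument yields that $\Gamma$ is a quasi-isomorphism in each arity $r$, as required.
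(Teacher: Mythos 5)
There is a genuine gap, and it sits exactly where the real difficulty of the theorem lies. Your plan treats the arity-$r$ part of the operadic realization as if it were itself a coend in $\mathcal{C}$ of arity-$r$ pieces, so that a map of cosimplicial chain complexes $N_*^\Bbbk(\triangle^\bullet)\to\Omega^\bullet(\underline{r})$ would induce $\Gamma$ ``by passing to the coend.'' But colimits of operads are not computed arity-wise: in the paper's model, $\arrowvert P_\bullet\arrowvert_{Op_\mathcal{C}}(\underline{r})\cong B^c\bigl(\int^{\underline{k}} B(P_k)\otimes C(\triangle^k)\bigr)(\underline{r})$ is a sum over trees $T$ with $r$ leaves in which \emph{every vertex} of $T$ carries its own factor of $C(\triangle^k)$, whereas the chain-level realization $\int^{\underline{k}} B^c(B(P_k))(\underline{r})\otimes C(\triangle^k)$ carries a \emph{single} such factor. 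The essential mechanism that makes a comparison map possible at all is the comultiplication $\triangle: C(\triangle^k)\to C(\triangle^k)^{\otimes V(T)}$, which distributes the one simplicial coordinate over all vertices of the tree; this is precisely why the frame must be built from \emph{coalgebras} and not just from chain complexes weakly equivalent to $\Bbbk$. Your proposal never invokes this coproduct, and without it there is no candidate map $\Gamma$. (A smaller but real error in the same step: $U$ is the \emph{left} adjoint in Theorem \ref{Cofree Cocommutative_Coalgebra}, so the natural map is the counit $UC(V)\to V$, not a unit $V\to UC(V)$ as you write.)

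Two further steps would also fail as stated. First, applying $C(-)$ directly to $N_*^\Bbbk(\triangle^n)$ does not produce a resolution of $\Bbbk$: Theorem \ref{C(V)_contractible_lemma} applies only to \emph{acyclic} complexes, and $N_*^\Bbbk(\triangle^n)$ has $H_0=\Bbbk$ (indeed $C(\Bbbk)$ is a large coalgebra with zero differential, nowhere near $\Bbbk$). This is exactly why Proposition \ref{coFree_Cocomutative_Coalgebra} builds $C(\triangle^n)$ inductively as $C(V^n)\otimes C(cosk_{n-1}\triangle^n)$ with $V^n$ acyclic, rather than as $C$ of the simplex chains. Second, your skeletal filtration in the simplicial direction does not linearize the operadic structure, so the reduction to a levelwise statement about frames cannot go through; the paper instead filters the cobar construction by the number of tree vertices (Definition \ref{filtering_cobar_construction}), so that on the associated graded the cobar differential is suppressed and the comparison becomes tree-by-tree, where it is settled by iterated Alexander--Whitney quasi-isomorphisms. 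In short, you assembled the right ingredients (cofree coalgebras, contractibility, frames) but missed both the structural role of the coalgebra coproduct in constructing $\Gamma$ and the tree filtration needed to prove it is a quasi-isomorphism.
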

				
We give the proof of this result at the end of this section. In a preliminary step we make explicit the cosimplicial frame  $P\otimes \triangle^\bullet,$ associated to an operad $P,$  in order to fix a model for  $\arrowvert -\arrowvert_{Op_{\mathcal{C}}}.$ We will need the next proposition  which gives a cosimplicial resolution of $\Bbbk$ in CDGC.	
			
				\begin{proposition}\label{coFree_Cocomutative_Coalgebra}
					There exists a cosimplicial cocommutative non negatively differential graded coalgebra $C(\triangle^{\bullet})$ so that one have a diagram of coalgebras:	\begin{center}
						$\xymatrix{ \Bbbk \otimes sk_0 \triangle^\bullet \text{  } \ar@{>->}[r]^-{ \beta^\bullet} & C(\triangle^\bullet)\ar[r]_-{\simeq}^-{\eta^\bullet} & \Bbbk
						}$
					\end{center} 
					where \begin{enumerate}
						\item [-]
				$\Bbbk \otimes sk_0 \triangle^n :=\underset{i=0}{\overset{n}{\oplus}} \Bbbk g_i$ is the cocommutative coalgebra with the coproduct $\triangle g_i:=g_i\otimes g_i$ and $deg(g_i)=0.$
					\item[-] The second map $\eta^\bullet $ is a levelwise quasi-isomorphism. Namely $,\forall n,  \eta^n: C(\triangle^n)\overset{\simeq}{\longrightarrow} \Bbbk $ 
						\end{enumerate}
				\end{proposition}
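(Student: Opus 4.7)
The strategy is to build $C(\triangle^{\bullet})$ by applying the cofree cocommutative coalgebra functor $C$ of Theorem \ref{Cofree Cocommutative_Coalgebra} to a suitably chosen cosimplicial \emph{acyclic} chain complex, and then to invoke Theorem \ref{C(V)_contractible_lemma} in order to produce the levelwise quasi-isomorphism $\eta^{\bullet}$.

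Concretely, I would define a cosimplicial chain complex $V^{\bullet}$ in $Ch_{\geq 0}(\Bbbk)$ by setting
\begin{equation*}
V^{n} := \bigoplus_{i=0}^{n} D(0)_{(i)},
\end{equation*}
where the $i$th summand is a copy of the acyclic two-term complex $D(0)$ from the proof of Theorem \ref{C(V)_contractible_lemma} (with generators $x_{i}$ in degree $0$ and $y_{i}$ in degree $1$, $dy_{i}=x_{i}$), thought of as labelled by the $i$th vertex of $\triangle^{n}$. The cosimplicial structure maps are those induced by the cosimplicial structure on the vertex sets: the coface $d^{j}:V^{n-1}\to V^{n}$ is the inclusion that skips the index $j$, and the codegeneracy $s^{j}:V^{n+1}\to V^{n}$ is the surjection identifying indices $j$ and $j+1$. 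I then set $C(\triangle^{n}):=C(V^{n})$. Functoriality of $C$ makes $C(\triangle^{\bullet})$ a cosimplicial object of $CDGC$, and since each $V^{n}$ is a direct sum of acyclic complexes it is itself acyclic, so Theorem \ref{C(V)_contractible_lemma} immediately gives that the counit $\eta^{n}: C(V^{n})\to \Bbbk$ is a quasi-isomorphism.

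To construct $\beta^{\bullet}$, I would use the adjunction $U\dashv C$. The chain map $U(\Bbbk\otimes sk_{0}\triangle^{n})=\bigoplus_{i}\Bbbk\cdot g_{i}\longrightarrow V^{n}$ sending $g_{i}\mapsto x_{i}$ corresponds, by adjunction, to a unique coalgebra morphism $\beta^{n}: \Bbbk\otimes sk_{0}\triangle^{n}\to C(V^{n})$, which sends each grouplike generator $g_{i}$ to the grouplike element of $C(V^{n})$ classified by the $0$-cycle $x_{i}$. Since the $x_{i}$ are linearly independent $0$-cycles of $V^{n}$, their images are distinct, hence linearly independent, grouplikes of $C(V^{n})$, and $\beta^{n}$ is a monomorphism. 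Because both source and target are cosimplicial via the \emph{same} vertex-set functoriality, the collection $\beta^{\bullet}$ is automatically a cosimplicial map, and the composite $\eta^{\bullet}\circ\beta^{\bullet}$ is the canonical augmentation $g_{i}\mapsto 1$ of $\Bbbk\otimes sk_{0}\triangle^{\bullet}$.

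The main obstacle that remains is to confirm that $\beta^{\bullet}$ is a cofibration in the sense indicated by the arrow $\rightarrowtail$ in the statement, which is needed for the subsequent use of $C(\triangle^{\bullet})$ as a cosimplicial frame. If ``cofibration'' here just means a levelwise monomorphism of coalgebras, the argument above already suffices. For a Reedy-type cofibrancy, one must verify that the latching map at level $n$ exhibits the step from $V^{n-1}$ (with its $n$ grouplikes $g_{0},\ldots,g_{n-1}$) to $V^{n}$ as the adjunction of a fresh summand $D(0)_{(n)}$ with its associated new grouplike, so that $\beta^{n}$ is realised as a pushout of a cofibration along the latching inclusion. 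This last step is the delicate one but is essentially bookkeeping once the ambient Reedy structure on cosimplicial $CDGC$ is spelled out.
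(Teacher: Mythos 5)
Your construction gets the easy two-thirds of the statement right: since $V^{n}=\bigoplus_{i=0}^{n}D(0)_{(i)}$ is a genuinely cosimplicial acyclic complex, $C(V^{\bullet})$ is a cosimplicial coalgebra, Theorem \ref{C(V)_contractible_lemma} gives the levelwise quasi-isomorphism $\eta^{\bullet}$, and your $\beta^{\bullet}$ is a levelwise monomorphism of cosimplicial coalgebras. The genuine gap is the step you defer as ``bookkeeping'': for your choice of $C(\triangle^{\bullet})$, the map $\beta^{\bullet}$ is \emph{not} a Reedy cofibration, and no bookkeeping can make it one. Look at level $1$. Since colimits of coalgebras are created in chain complexes (cf.\ the remark before Theorem \ref{Cofree Cocommutative_Coalgebra}), the latching object is the coproduct $L^{1}C(\triangle^{\bullet})=C(D(0))\amalg C(D(0))=C(D(0))\oplus C(D(0))$, and because the latching map of $\Bbbk\otimes sk_{0}\triangle^{\bullet}$ is an isomorphism, the relative latching map of $\beta^{\bullet}$ at level $1$ is exactly the latching map $L^{1}C(\triangle^{\bullet})\to C(V^{1})$. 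Now every cofree coalgebra $C(W)$ contains the grouplike corresponding to the zero cycle, namely the image of $\Bbbk=C(0)\to C(W)$; by functoriality, both $C(d^{0})$ and $C(d^{1})$ send this grouplike of $C(V^{0})$ to the \emph{same} grouplike of $C(V^{1})$. The difference of its two copies in the direct sum is therefore a nonzero element of the kernel: the latching map is not injective, hence not a cofibration in any structure where cofibrations are monomorphisms (which is the notion the paper uses -- it certifies its own $\beta^{n}$ as a cofibration precisely by exhibiting injectivity). The root cause is that $C$ is a right adjoint: it does not commute with the colimits computing latching objects, so the faces $C(V^{n-1})\hookrightarrow C(V^{n})$ overlap inside $C(V^{n})$ (they share all grouplikes attached to common cycles, in particular $0$) far more than the latching colimit records. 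Your picture of the latching map as ``adjoining a fresh summand $D(0)_{(n)}$'' tacitly replaces the latching object, a colimit over the whole latching category, by the single face $C(V^{n-1})$, and that is exactly where the argument breaks.

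This failure is why the paper's proof (Lemma \ref{Lemma 1 coFree Cocomutative }) is forced into its more laborious induction: there one starts with $C(\triangle^{0}):=\Bbbk$ (not $C(D(0))$), and at stage $n$ the acyclic complex $V^{n}$ is not chosen in advance but is manufactured \emph{relative to the latching object}, by factoring the underlying chain map of $C(sk_{n-1}\triangle^{n})=L^{n}C(\triangle^{\bullet})\to 0$ as a chain-level cofibration $\alpha^{n}$ into an acyclic complex followed by a quasi-isomorphism, and then co-extending $\alpha^{n}$ through the adjunction of Theorem \ref{Cofree Cocommutative_Coalgebra}; the resulting latching map is injective because its composite with the projection $C(V^{n})\to V^{n}$ is the injective $\alpha^{n}$, and the coskeletal tensor factor $C(cosk_{n-1}\triangle^{n})$ is what supplies the codegeneracies. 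Any repair of your argument has to rebuild $V^{n}$ inductively against the latching objects in this way, i.e.\ it collapses back into the paper's construction. A secondary remark: for the intended application (cosimplicial frames for $B^{c}B(P)$) one also wants $C(\triangle^{0})=\Bbbk$, so that level $0$ of $P\otimes\triangle^{\bullet}$ is $B^{c}B(P)$ itself; your construction only gives $C(\triangle^{0})=C(D(0))\simeq\Bbbk$, a weak equivalence rather than an identification.
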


	Now let $P$ be an operad.  Since the tensor product of a cocommutative coalgebra with a cooperad is again a cooperad, we deduce from Proposition \ref{coFree_Cocomutative_Coalgebra}  the following diagram in the category $coOp_\mathcal{C}$ of cooperads on $\mathcal{C}=Ch_{\geq t}(\Bbbk):$  $\forall k\geq 0$
	\begin{center}
		$\xymatrix{ B(P) \overline{\otimes} sk_0 \triangle^k \text{  } \ar@{>->}[r]^-{ \beta_k} & B(P) \overline{\otimes} C(\triangle^k) \ar[r]^-{\simeq}& B(P)
		}$
	\end{center}
	where, 
	\begin{enumerate}
		\item [-] $B(P)$ denotes the bar construction of $P;$
		\item[-] $B(P) \overline{\otimes} C(\triangle^k)(1):= \Bbbk;$
		\item[-] $ \forall r\geq 2, B(P) \overline{\otimes} C(\triangle^k)(r):= B(P)(r) \otimes C(\triangle^k).$
	\end{enumerate}
	By applying the cobar construction functor $B^c (-)$ to this sequence one gets the operad diagram:
	\begin{center}
		$\xymatrix{ B^c(B(P) \overline{\otimes} sk_0 \triangle^\bullet) \text{  } \ar@{>->}[r]^-{ \beta_\bullet} & B^c(B(P)\overline{\otimes} C(\triangle^\bullet)) \ar[r]^-{\simeq}& B^c(B(P)) \ar[r]^-{\simeq} & P
		}$
	\end{center}
	One deduces without too much effort from this on  the later sequence that \begin{center}
		$P\otimes \triangle^{\bullet}:=B^c(B(P)\overline{\otimes} C(\triangle^\bullet))$ 
	\end{center} 
	satisfies all the hypothesis of cosimplicial frames (see \cite[$Â§$3.2.2]{BF17}) associated to $B^cB(P).$

				\begin{definition}[Realization]\label{Realization functor}
					If $P_\bullet$ is a simplicial operad on chain complexes $\mathcal{C}=Ch_{\geq t}(\Bbbk) , t\in \mathbb{N}\cup\{-\infty\} ,$ then the realization $|P_\bullet|_{Op_\mathcal{C}} $ of $P_\bullet$ is given by the coend in $Op_\mathcal{C}$
					\begin{center}
						$|P_\bullet|_{Op_\mathcal{C}}:= \int ^{\underline{k}\in \triangle} P_k \otimes \triangle^k$
					\end{center}
				\end{definition}	
				
	We now give the proof of Proposition \ref{coFree_Cocomutative_Coalgebra} which is the combination of the next two lemma.

				\begin{mylemma}\label{Lemma 1 coFree Cocomutative }
						There exists a cosimplicial cocommutative non negatively differential graded coalgebra $C(\triangle^{\bullet})$ so that one has a diagram of coalgebras:	\begin{center}
							$\xymatrix{ \Bbbk \otimes sk_0 \triangle^\bullet \text{  } \ar@{>->}[r]^-{ \beta^\bullet} & C(\triangle^\bullet)
							}$
						\end{center} 
						where 
							$\Bbbk \otimes sk_0 \triangle^n :=\underset{i=0}{\overset{n}{\oplus}} \Bbbk g_i$ is the cocommutative coalgebra with the coproduct $\triangle g_i:=g_i\otimes g_i$ and $deg(g_i)=0.$
					
				\end{mylemma}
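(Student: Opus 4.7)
The plan is to construct $C(\triangle^\bullet)$ by applying the cofree cocommutative coalgebra functor $C$ of Theorem \ref{Cofree Cocommutative_Coalgebra} to a natural cosimplicial chain complex model of the standard simplex. Explicitly, I set $C(\triangle^n) := C(N_*(\triangle^n))$, where $N_*(\triangle^n)$ denotes the normalized chain complex of the simplicial set $\triangle^n$; as $n$ varies this defines a cosimplicial object of $Ch_{\geq 0}(\Bbbk)$, and since $C$ is functorial, $C(\triangle^\bullet)$ becomes a cosimplicial object of $CDGC$.

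For the map $\beta^\bullet$, I use the adjunction $U \dashv C$. The vertex inclusion $sk_0 \triangle^\bullet \hookrightarrow \triangle^\bullet$ induces a cosimplicial chain map $\iota^\bullet : U(\Bbbk \otimes sk_0 \triangle^\bullet) \to N_*(\triangle^\bullet)$ sending each $g_i$ to the vertex $v_i$. By the universal property of $C$, this chain map has a unique adjoint coalgebra map $\beta^\bullet : \Bbbk \otimes sk_0 \triangle^\bullet \to C(\triangle^\bullet)$, and naturality of the adjunction ensures that $\beta^\bullet$ is cosimplicial.

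The only genuine verification is that each $\beta^n$ is a monomorphism. By the adjunction, $Hom_{CDGC}(\Bbbk, C(V)) \cong Hom_{Ch_{\geq 0}(\Bbbk)}(\Bbbk, V) = Z_0(V)$, and tracing through this bijection shows that $\beta^n$ sends each $g_i$ to the group-like element of $C(N_*(\triangle^n))$ corresponding to the cycle $v_i \in N_0(\triangle^n)$. Since the vertices $v_0, \ldots, v_n$ are pairwise distinct, their images under $\beta^n$ are pairwise distinct group-like elements of a cocommutative coalgebra over the field $\Bbbk$. The classical theorem that distinct group-like elements are linearly independent (e.g.\ Sweedler, \emph{Hopf Algebras}, Prop.~3.2.1) then gives that $\beta^n$ is injective in degree $0$; since $\Bbbk \otimes sk_0 \triangle^n$ is concentrated in degree $0$, this yields the required monomorphism.

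I expect the main obstacle to be the explicit identification of group-like elements of $C(V)$ with the degree-$0$ cycles of $V$. The construction of $C$ in Theorem \ref{Cofree Cocommutative_Coalgebra} proceeds via pro-finite CDGAs and is not element-wise explicit, so one must unwind the chain of isomorphisms through $D'$, the completion of $\Lambda V^\vee$, and the duality of Proposition \ref{equivalence between coalgebra and profinite algebras} to pin down concretely the group-like attached to a given $0$-cycle of $V$. Once this identification is in hand, both cosimpliciality and injectivity follow by formal arguments.
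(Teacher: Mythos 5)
Your construction does prove the literal statement---$C(N_*(\triangle^\bullet))$ is cosimplicial by functoriality, $\beta^\bullet$ is cosimplicial by naturality of the adjunction, and the group-like argument for injectivity is correct---but it builds the wrong object, and the defect is fatal for the role this lemma plays. The lemma is explicitly the first half of Proposition \ref{coFree_Cocomutative_Coalgebra}: the \emph{same} $C(\triangle^\bullet)$ must also admit levelwise quasi-isomorphisms $\eta^n: C(\triangle^n)\overset{\simeq}{\longrightarrow}\Bbbk$, and the paper's proof of that half (via Lemma \ref{C delta is acyclic lemma}) uses the internal structure of the construction, namely that $C(\triangle^n)$ is a tensor product of cofree coalgebras $C(V^k)$ on \emph{acyclic} complexes $V^k$, to which Theorem \ref{C(V)_contractible_lemma} applies. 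Your $C(N_*(\triangle^n))$ fails this property: Theorem \ref{C(V)_contractible_lemma} requires $H(V)=0$, whereas $N_*(\triangle^n)$ is contractible but not acyclic ($H_0=\Bbbk$), and the cofree functor does not carry the quasi-isomorphism $V\simeq\Bbbk$ to a quasi-isomorphism $C(V)\simeq\Bbbk$. Concretely, already for $n=0$ your definition gives $C(N_*(\triangle^0))=C(\Bbbk)=\underset{I}{\text{colim }}(\Bbbk[x]/I)^\vee$, the finite dual of the polynomial algebra: it is concentrated in degree $0$ with zero differential and contains one group-like $\mathrm{ev}_\lambda$ for every $\lambda\in\Bbbk$---pairwise distinct, hence linearly independent by the very Sweedler result you invoke---so it is infinite dimensional and not quasi-isomorphic to $\Bbbk$. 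This is precisely why the paper factors the map $C(sk_{n-1}\triangle^n)\longrightarrow 0$ (to zero, not to $\Bbbk$) through an acyclic complex $V^n$, applies $C$ there, and sets $C(\triangle^n):=C(V^n)\otimes C(cosk_{n-1}\triangle^n)$.

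A secondary gap: the tailed arrow in the statement denotes a cofibration in the paper's usage, and what the paper's proof actually establishes (and later needs, for $P\otimes\triangle^\bullet:=B^c(B(P)\overline{\otimes}C(\triangle^\bullet))$ to be a cosimplicial frame) is that $\beta^\bullet$ is a \emph{Reedy} cofibration, i.e.\ that the latching maps $C(sk_{r-1}\triangle^r)\longrightarrow C(\triangle^r)$ are cofibrations; your argument only yields levelwise injectivity and never addresses latching objects. Note also that, read as bare existence of a levelwise injection, the statement is nearly vacuous (the identity of $\Bbbk\otimes sk_0\triangle^\bullet$ would satisfy it), so the real content of the lemma is the construction itself, which must be the one that survives into Proposition \ref{coFree_Cocomutative_Coalgebra} and Lemma \ref{C delta is acyclic lemma}; your identification of group-likes with $0$-cycles, while correct, is not the main obstacle.
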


				\begin{proof}
				\begin{enumerate}
					\item 
					We give an inductive construction of the cosimplicial coalgebra $C(\triangle^{\bullet}).$
					We start with $C(\triangle ^0):=\Bbbk.$
					The morphism of chain complexes \begin{center}
						$C(sk_0 \triangle^1):=\Bbbk g_0\oplus \Bbbk g_1 \longrightarrow 0$
					\end{center} factors as: \begin{center}
						$\xymatrix{ C(sk_0 \triangle^1) \ar[rr]\ar@{>->}[rd]^-{\alpha^1}& & 0\\
							& V^1 \ar[ru]^{\simeq} &
						} $ 
					\end{center}	
					Since the domain of $\alpha^1$ is a cocommutative coalgebra, using the universal property of cofree cocommutative coalgebras described in Proposition \ref{Cofree Cocommutative_Coalgebra},  we co-extends $\alpha^1$ to:
				\begin{center}
					$\xymatrix{ \Bbbk g_0\oplus \Bbbk g_1 \text{ \ \  } \ar@{>->}[r]^-{\beta^1} & C(V^1)
					}$
				\end{center} 
				
		$\beta_1$ is a cofibration because of the following cocommutative diagram
		\begin{center}
			$\xymatrix{ \Bbbk g_0\oplus \Bbbk g_1 \text{ \ \  } \ar[rr]^-{\beta^1} \ar@{>->}[rrd]^-{\alpha^1}& & C(V^1) \ar@{->>}[d]^{}\\
				&& V^1
			} $ 
		\end{center}	
		 		where the vertical map is the natural projection.
				
	In addition according to Lemma \ref{C(V)_contractible_lemma}, one has $C(V^1)\overset{\eta}{\underset{\simeq}{\longrightarrow}} \Bbbk.$ Therefore we have the following diagram in the category of coalgebras:
					\begin{center}
						$\xymatrix{ \Bbbk g_0\oplus \Bbbk g_1 \text{  } \ar@{>->}[r]^-{\beta^1} & C(V^1) \ar[r]_-{\simeq}^-{\eta}& \Bbbk
						}$
					\end{center}

				 We then set: 
					\begin{enumerate}
						\item [-] $C(cosk_0 \triangle^1):=C(\triangle ^0)= \Bbbk$
						\item [-] $C(\triangle^1):=C(V ^1)\otimes C(cosk_0 \triangle^1)\cong C(V ^1);$
						The cofaces $d^0,d^1: C(\triangle^0) \longrightarrow C(\triangle^1)$ are given by the respective restrictions of $\beta^1$ on $\Bbbk g_0$ and on $\Bbbk g_1.$ The codegeneracy  $s^0: C(\triangle^1) \longrightarrow C(\triangle^0)$   is given by $ \eta.$

					\end{enumerate}
					If $k=2,$ we first  use the notation $C(sk_1 \triangle^2)$ to be the colimit, as a coalgebra,  of the diagram 
					\begin{center}
						$ \xymatrix{ & &C(\triangle^0_{(2)}) \ar[ld] \ar[dr] &\\
							&	C(\triangle^1_{(02)}) & &C(\triangle^1_{(12)})\\
							C(\triangle^0_{(0)})\ar[ur]\ar[rr]& & C(\triangle^1_{(01)}) &&C(\triangle^0_{(1)})\ar[ll] \ar[lu]   	  
						}$
					\end{center}	
					where for any $i,j \in \{0,1,2\}, $  $C(\triangle^0_{(i)})= \Bbbk g_i$ and refers to the vertices of the standard simplex $\triangle^2;$ $C(\triangle^1_{(ij)})$ are copies of $C(\triangle^1)$ whose indices refer to the 1-simplices of the standard simplex $\triangle^2.$
					As in the previous case, we consider the following factorization of the trivial map:
					\begin{center}
						$\xymatrix{ C(sk_1 \triangle^2) \text{  } \ar[rr]\ar@{>->}[rd]^-{\alpha^2}& & 0\\
							& V^2 \ar[ru]^{\simeq} &
						} $ 
					\end{center}	
					and using the same previous trick, one recovers from $\alpha_2$ a coalgebra morphism 
					\begin{center}
						$\xymatrix{ C(sk_1 \triangle^2) \text{  } \ar@{>->}[r]^-{ } & C(V^2) \ar[r]^-{\simeq}& \Bbbk
						}$
					\end{center}  
					In addition one has an inclusion $\Bbbk \otimes sk_0 \triangle^2 \rightarrowtail C(sk_1 \triangle^2),$ thus one form the morphism 
					\begin{center}
						$\xymatrix{ \Bbbk \otimes sk_0 \triangle^2 \text{  } \ar@{>->}[r]^-{ \beta^2} & C(V^2) \ar[r]^-{\simeq}& \Bbbk
						}$
					\end{center}
					We then set 
					\begin{enumerate}
						\item [-] $C(cosk_1 \triangle^2):=lim (\xymatrix{ C(\triangle^1)\ar[r]^-{s^0} & C(\triangle^0)& C(\triangle^1)\ar[l]_-{s^0} })$
						\item[-] $C(\triangle^2):= C(V^2)\otimes C(cosk_1 \triangle^2);$ The cofaces $d^0, d^1, d^2: C(\triangle^1)\longrightarrow C(\triangle^2)$ are given , with our construction,  by the respective restriction of $\alpha_2$ on the cofaces $C(\triangle^1_{(ij)}).$  The codegeneracies $s^0, s^1: C(\triangle^2) \longrightarrow C(\triangle^1)$ are obtained from the projection $C(V^2)\otimes C(cosk_1 \triangle^2) \longrightarrow  C(cosk_1 \triangle^2)$ followed respectively by the projections $C(cosk_1 \triangle^2) \longrightarrow C(\triangle^1).$
					\end{enumerate} 
					This construction generalizes inductively to higher values according to the shape of the standard simplexes $\triangle^n  (n\geq0), $ as follows:
					\begin{enumerate}
						\item[-] $C(sk_{n-1}\triangle^{n}):=\underset{n>\underline{m} \rightarrow \underline{n}}{colim} C(\triangle^m);$ One forms the factorization 
						\begin{center}
							$\xymatrix{ C(sk_{n-1} \triangle^n) \text{  } \ar@{>->}[r]^-{ } & C(V^n) \ar[r]^-{\simeq}& \Bbbk
							}$
						\end{center}
						and then deduce $\beta^n: \Bbbk \otimes sk_0 \triangle^n \rightarrowtail C(V^n) $ by precomposing the above injection  with the sequence 
						$\Bbbk \otimes sk_0 \triangle^n\rightarrowtail C(sk_1 \triangle^n) \rightarrowtail ...\rightarrowtail C(sk_{n-1} \triangle^n).$
						\item [-] $C(cosk_{n-1} \triangle^n):=\underset{\underline{n} \rightarrow \underline{m}< n}{lim} C(\triangle^m)$
						\item[-] $C(\triangle^n):= C(V^n)\otimes C(cosk_{n-1} \triangle^n);$ The cofaces $d^{i}: C(\triangle^{n-1})\longrightarrow C(\triangle^{n})$ and the codegeneracies $s^j: C(\triangle^n) \longrightarrow C(\triangle^{n-1})$ are obtained,  as in the lower cases, by construction.
					\end{enumerate}
			
				\item	Finally We need to show that $\beta_\bullet$ is a Reedy cofibration.
				In fact, one can see that $\forall r\geq0,$ the $r-$th latching object $L^r \Bbbk\otimes sk_0 \triangle^\bullet$ of the cosimplicial object $\Bbbk\otimes sk_0 \triangle^\bullet$ is 
				\begin{center}
					$L^r \Bbbk\otimes sk_0 \triangle^\bullet \cong  \Bbbk\otimes sk_0 \triangle^r$
				\end{center}
				Therefore proving that $\beta_\bullet$ is a Reedy cofibration reduces to proving that $\forall r,$ the map $L^rC(\triangle^\bullet)=C(sk_{r-1}\triangle^r) \longrightarrow C(\triangle^r)$ is a cofibration in $\text{co-}\mathcal{CA}$ and this is done through the construction of $C(\triangle^\bullet).$

						\end{enumerate}
				\end{proof}

				\begin{mylemma}\label{C delta is acyclic lemma}
					Using the notations of Proposition \ref{coFree_Cocomutative_Coalgebra} there is $,\forall n\geq 0,$ an isomorphism

						\begin{center}
							$C(\triangle^n)\cong \underset{ \underset{n\geq k}{v: \underline{n} \twoheadrightarrow \underline{k}}}{\bigotimes} C(V^k)_v$
						\end{center}
					
					where given any surjection $v: \underline{n} \twoheadrightarrow \underline{k},$  $C(V^k)_v$ is the coalgebra $ C(V^k)$ labeled by $v.$
				\end{mylemma}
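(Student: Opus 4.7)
The argument goes by induction on $n$. For $n=0$ the only surjection is $\mathrm{id}:\underline{0}\twoheadrightarrow\underline{0}$, and $C(\triangle^0)=\Bbbk=C(V^0)$ with the convention $V^0=0$ (so that $C(V^0)$ is the terminal coalgebra $\Bbbk$, via the right adjoint $C$), and the base case is immediate. For the inductive step I assume the decomposition for all $m<n$ and use the explicit splitting
\[
C(\triangle^n) \;\cong\; C(V^n)\otimes C(\mathrm{cosk}_{n-1}\triangle^n)
\]
furnished by the construction in the previous lemma. The tensor factor $C(V^n)$ is labeled by the identity surjection $\mathrm{id}_{\underline{n}}$, so it remains to identify $C(\mathrm{cosk}_{n-1}\triangle^n)$ with the tensor product $\bigotimes_{v}C(V^k)_v$ taken over non-identity surjections $v:\underline{n}\twoheadrightarrow\underline{k}$ with $k<n$.

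By definition $C(\mathrm{cosk}_{n-1}\triangle^n)$ is the limit in CDGC of the diagram $C(\triangle^{m(f)})$ over the slice category $\mathcal{S}$ of maps $f:\underline{n}\to\underline{m}$ in $\Delta$ with $m<n$. The Eilenberg--Zilber factorization $f=\iota_f\circ v_f$, with $v_f$ a surjection and $\iota_f$ an injection, exhibits the full subcategory $\mathrm{Deg}(n):=\{v:\underline{n}\twoheadrightarrow\underline{k}\mid k<n\}$ as initial in $\mathcal{S}$; hence
\[
C(\mathrm{cosk}_{n-1}\triangle^n) \;\cong\; \lim_{v\in\mathrm{Deg}(n)} C(\triangle^{k(v)}).
\]
Substituting the inductive hypothesis $C(\triangle^{k(v)})\cong\bigotimes_{u\in S_{k(v)}}C(V^{\ell(u)})_u$ and recalling that the tensor product is the categorical product in CDGC, this limit can be rewritten as a product of subfactors matched by the transition maps.

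The main obstacle will be the combinatorial bookkeeping at this last stage. Concretely, for a codegeneracy $\sigma:\underline{k}\twoheadrightarrow\underline{k'}$ representing a morphism $v\to \sigma\circ v$ in $\mathrm{Deg}(n)$, the induced map $C(\triangle^{\sigma})$ is, by the recursive construction of $C(\triangle^\bullet)$, the projection of the tensor decomposition of $C(\triangle^k)$ onto the subfactors $C(V^{\ell})_u$ for those $u\in S_k$ which factor through $\sigma$. Chasing these projections, a compatible element in the limit corresponds exactly to the choice of one component in $C(V^{k(v)})_v$ for each $v\in\mathrm{Deg}(n)$, yielding the desired isomorphism. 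The initiality of $\mathrm{Deg}(n)\hookrightarrow\mathcal{S}$ and the projection behaviour of the codegeneracies both follow from the uniqueness of the Eilenberg--Zilber factorization together with the recursive definition of $C(\triangle^\bullet)$, so once the bookkeeping is set up the remaining verification is routine simplicial combinatorics.
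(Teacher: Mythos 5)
Your proposal is correct and takes essentially the same route as the paper: induction on $n$, the splitting $C(\triangle^n)\cong C(V^n)\otimes C(\mathrm{cosk}_{n-1}\triangle^n)$ coming from the construction of $C(\triangle^\bullet)$, and identification of the coskeleton with the tensor product over the non-identity surjections by matching the factor $C(V^{\ell})_f$ sitting inside $C(\triangle^{k})_v$ with the composite surjection $f\circ v:\underline{n}\twoheadrightarrow\underline{\ell}$. The only differences are in presentation: where you invoke initiality of the subcategory of surjections (via the Eilenberg--Zilber factorization, which usefully justifies a reduction the paper performs without comment) and the fact that $\otimes$ is the categorical product in $CDGC$, the paper writes the coskeleton directly as a limit over surjections and carries out the same factor-matching through explicit mutually inverse maps $\Phi$ and $\Psi$.
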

				
				\begin{proof}
					We give the proof by induction on $n.$ When $n=0,$ then $C(\triangle^0)=\Bbbk$ satisfies the hypothesis. We now consider that we have constructed ,for a given $n,$ the isomorphisms  
					\begin{center}
							$C(\triangle^r)\cong \underset{\underset{n\geq k}{v: \underline{n} \twoheadrightarrow \underline{k}}}{\otimes} C(V^k)_v,$ for $r\in \{0, ..., n\}.$
					\end{center}

					Since $C(\triangle^{n+1})=C(V^{n+1})\otimes C(cosk_n\triangle^{n+1}),$  it remains in proving that 		
						\begin{center}
							$C(cosk_n\triangle^{n+1})\cong   \underset{	\underset{n+1\geq k}{v: \underline{n+1} \twoheadrightarrow \underline{k}}}{\otimes} C(V^k)_v.$
						\end{center}

					We consider the morphism 
					\begin{center}
						$ \Phi: C(cosk_n\triangle^{n+1})=\underset{\underset{n+1 >m}{u: \underline{n+1}\twoheadrightarrow \underline{m}}}{\text{ lim}} C(\triangle^m)_u \longrightarrow    \underset{\underset{n+1>k}{ v: \underline{n+1} \twoheadrightarrow \underline{k}}}{\otimes} C(V^k)_v$
					\end{center}

					given by the morphisms: for any surjection $u:\underline{n+1}\twoheadrightarrow \underline{m},$ 
					\begin{center}
						$\Phi_u: C(\triangle^m)_u= \underset{f: \underline{m}\twoheadrightarrow \underline{k}}{\otimes} C(V^k)_f \longrightarrow \underset{\underset{n+1 >k}{ v: \underline{n+1} \twoheadrightarrow \underline{k}}}{\otimes} C(V^k)_v$
					\end{center}
					is the map which is identity on the terms
					\begin{center}
					$	C(V^k)_f \overset{=}{\longrightarrow } C(V^k)_v$ when $v=f\circ u$
					\end{center}
				On the other hand, 	We define the morphism 
					\begin{center}
						$ \Psi: \underset{ \underset{n+1>k}{v: \underline{n+1} \twoheadrightarrow \underline{k}}}{\otimes} C(V^k)_v \longrightarrow C(cosk_n\triangle^{n+1})=\underset{ \underset{n+1>m}{u: \underline{n+1}\twoheadrightarrow \underline{m}}}{\text{ lim}} C(\triangle^m)_u $
					\end{center} 
					given by the morphisms: for any surjection $u:\underline{n+1}\twoheadrightarrow \underline{m},$ 
					\begin{center}
						$\Psi_u:  \underset{ \underset{n+1>k}{v: \underline{n+1} \twoheadrightarrow \underline{k}}}{\otimes} C(V^k)_v \longrightarrow C(\triangle^m)_u= \underset{f: \underline{m}\twoheadrightarrow \underline{k}}{\otimes} C(V^k)_f  $
					\end{center}
					is the map which is identity on the terms
					\begin{center}
						$	C(V^k)_v \overset{=}{\longrightarrow } C(V^k)_f$ with $v=f\circ u$
					\end{center}
					$\Psi_u$ is well defined since given an integer $k$ such that $k\leq m $ and a surjection $v: \underline{n+1}\twoheadrightarrow \underline{k},$ there is a unique surjection $f:\underline{m} \twoheadrightarrow \underline{k}$ so that $v=f\circ u.$
					
					One can see by construction that $\Psi_u \Phi_u=Id$ and that $\Phi_u \Psi_u=Id.$ This proves that   $\Psi \Phi=Id$ and that $\Phi \Psi=Id.$
				\end{proof}

		\begin{Proof of Proposition CoFree Cosimplicial  Cocommutative Coalgebra}
			Lemma \ref{Lemma 1 coFree Cocomutative } gives completly the first part of the proposition.

			 On the other hand, the proof that $C(\triangle^n) \simeq \Bbbk$ follows from Lemma \ref{C delta is acyclic lemma} (where we make a more explicit description of   $C(\triangle^n)$ ) and the fact that  $\forall m,  C(V^m)\simeq \Bbbk$ which follows by construction of $C(V^\bullet)$ in the proof of  	Lemma \ref{Lemma 1 coFree Cocomutative } .
			
		\end{Proof of Proposition CoFree Cosimplicial  Cocommutative Coalgebra}

			We end this section by giving the proof of Theorem \ref{Equivalence_between_realization_functors} which uses a spectral sequence argument.  For this, we start by defining a filtration for the cobar construction associated to a cooperad.
				
				\begin{notation}
					Let $T$ be a tree. If $M$ is any symmetric sequence in chain complexes, it will be more simple to adopt the notation $T(M):= \underset{v\in V(T)}{\otimes}M(I_v),$ where $V(T)$ denotes the set of the vertices of $T$ and given a vertex $v,$ $I_v$ is the set of all incoming edges to $v.$
					
				\end{notation}
				
				\begin{definition}[Filtering the cobar construction]\label{filtering_cobar_construction}
					Let $Q$ be a chain cooperad. We define the filtration $	\mathcal{F}_{ u}B^c(Q)$ of the cobar construction of the form:
					\begin{center}
						$ B^c(Q)=\mathcal{F}_{0} B^c(Q) \supseteq	\mathcal{F}_{ 1} B^c(Q)\supseteq ... \supseteq	\mathcal{F}_{  u}  B^c(Q)\supseteq ... $
					\end{center}
					as follows:  \begin{center}
						$	\mathcal{F}_{ u}B^c(Q):= \underset{T ,  |V(T)|\geq   u}{\bigoplus} T(\Sigma^{-1}\overline{Q}) , $
					\end{center}
					where $|V(T)|$ denotes the number of internal vertices of the tree $T.$
					
					The cobar differential $\partial_{cobar},$ which is defined from the cooperad coproduct $Q\longrightarrow Q\circ Q,$ increases the number of vertices of the tree. Therefore $\partial_{cobar} 	\mathcal{F}_{ u} B^c(Q) \subseteq \mathcal{F}_{u+1} B^c(Q).$
				\end{definition}
				The good characteristic of this filtration is that it isolates the  cobar differential $\partial_{cobar}$ from the first page of the associate spectral sequence. Namely, the associated bigraded complex  $\mathcal{F}_{*,*}B^c(Q)$ is given by
				\begin{center}
					$ \mathcal{F}_{*,*}B^c(Q) \cong F(\Sigma^{-1}\overline{Q}).$
				\end{center}

				\begin{Proof of Theorem Equivalence between realization functors}

					one can observe that: 
					\begin{align*}
					\arrowvert P_\bullet\arrowvert_{ Op_\mathcal{C}}(r) \overset{}{\cong}& B^c(\int_{coOp_\mathcal{C}}^{\underline{k}\in \triangle} B(P_k)\otimes C(\triangle^k))(r)  \text{ ( This is because $B^c(-)$ is a left adjoint functor )}\\
					\overset{(1)}{\cong}& B^c(\int_{[Finset, \mathcal{C}]}^{\underline{k}\in \triangle} B(P_k)\otimes C(\triangle^k))(r) , 
					\end{align*} 
					where:
					\begin{enumerate}
						\item[-] $coOp_\mathcal{C}$ denotes the category of cooperads on $\mathcal{C};$
						\item[-] The isomorphism $(1)$ is justified by the fact that the forgetful functor $U: coOp_{\mathcal{C}} \longrightarrow [Finset, \mathcal{C}]$ preserves colimits as a left adjoint. 
					\end{enumerate}

					On the other hand the  model of the realization functor in chain complexes $\arrowvert B^c(B(P_\bullet))(r)\arrowvert_\mathcal{C}$ that we use in this proof is the coend 
					\begin{center}
						$	\arrowvert B^c(B(P_\bullet))(r)\arrowvert_\mathcal{C} \simeq \int^{\underline{k}\in \triangle}_\mathcal{C} B^c(B(P_k))(r)\otimes C(\triangle^k)$
					\end{center}
					After this setting we want to construct a chain map
					\begin{center}
						$\Gamma: \int^{\underline{k}\in \triangle}_\mathcal{C} B^c(B(P_k))(r)\otimes C(\triangle^k) \longrightarrow B^c(\int_{[Finset, \mathcal{C}]}^{\underline{k}\in \triangle} B(P_k)\otimes C(\triangle^k))(r) $
					\end{center}
					The construction is made on the set of trees. Namely let $T$ be a tree with $r$ leaves. We fix an order on the set of the vertices of the tree $T$ and 
					we define the chain map $\Gamma_T$ as follows:
					\begin{align*}
					\Gamma_T: \int^{\underline{k}\in \triangle}_\mathcal{C} T(s^{-1}B(P_k))\otimes C(\triangle^k) & \longrightarrow T(s^{-1}\int_{[Finset, \mathcal{C}]}^{\underline{k}\in \triangle} B(P_k)\otimes C(\triangle^k)) \\
					T\otimes c=\{\gamma_v, v\in V(T) \}\otimes c &\longmapsto  \{\gamma_v \otimes c_v, v\in V(T) \}
					\end{align*}
					where \begin{align*}
					\triangle: C(\triangle^k)&\longrightarrow C(\triangle^k)^{\otimes V(T)}\\
					v &\longmapsto  \underset{v \in V(T)}{\otimes} c_v  \text{    \ \  \ \ \ \ \ \  \ \ \ \ \ \ is the coproduct of the coalgebra $C(\triangle^k).$}
					\end{align*}

					The collection of the maps $\Gamma_T$ produces naturaly the chain map $\Gamma$ that we want.
					
					We now have to prove that $\Gamma$ is a quasi-isomorphism. To prove it we use a spectral sequence argument from the filtration of the cobar construction of Definition \ref{filtering_cobar_construction}. Namely, let us build the filtrations

					\begin{align*}
					\mathcal{F}_{ u} \arrowvert B^c(B(P_\bullet))(r)|_\mathcal{C}:=& \int^{\underline{k}\in \triangle}_\mathcal{C}  \mathcal{F}_uB^c(B(P_k) ) \otimes C(\triangle^k)\\ 
					&\text{and } \\
					\mathcal{G}_{ u} \arrowvert P_\bullet\arrowvert_{ Op_\mathcal{C}}:= &\mathcal{F}_uB^c(\int_{[Finset, \mathcal{C}]}^{\underline{k}\in \triangle} B(P_k)\otimes C(\triangle^k) )
					\end{align*}
					
					One can see that the associated bigraded complexes   $\mathcal{F}_{*, *} \arrowvert B^c(B(P_\bullet))(r)|_\mathcal{C}$
					and $	\mathcal{G}_{*,*} \arrowvert P_\bullet\arrowvert_{ Op_\mathcal{C}}$ are equivalent to:
					\begin{align*}
					\mathcal{F}_{*, *} \arrowvert B^c(B(P_\bullet))(r)|_\mathcal{C} \cong& \int^{\underline{k}\in \triangle}_\mathcal{C}  F(\Sigma^{-1}\overline{B}(P_k) ) \otimes C(\triangle^k)
					\end{align*}
					and 
					\begin{align*}
					\mathcal{G}_{u,t} \arrowvert P_\bullet\arrowvert_{ Op_\mathcal{C}}=& F(\Sigma^{-1} \int_{[Finset, \mathcal{C}]}^{\underline{k}\in \triangle} B(P_k)\otimes C(\triangle^k) )
					\end{align*}
					
					From a classical theorem of spectral sequences, to prove that $\Gamma$ is a quasi-isomorphism, it will be enough to show that its restriction to the bigraded complexes 
					$\mathcal{F}_{*, *} \arrowvert B^c(B(P_\bullet))(r)|_\mathcal{C}$
					and $	\mathcal{G}_{*,*} \arrowvert P_\bullet\arrowvert_{ Op_\mathcal{C}}$  is a quasi-isomorphism with the internal differential of these complexes. This later condition is true if for any fixed tree $T,$ the associated chain map $\Gamma_T$ is a quasi-isomorphism with the internal differential.

					Let us consider now the following commutative diagram
					\begin{center}
						$\xymatrix{ \int^{\underline{k}\in \triangle}_\mathcal{C} T(s^{-1}B(P_k))\otimes C(\triangle^k) \ar[rr]^{\Gamma_T}&&T(s^{-1}\int_{[Finset, \mathcal{C}]}^{\underline{k}\in \triangle} B(P_k)\otimes C(\triangle^k)) \\ 
							\int^{\underline{k}\in \triangle}_\mathcal{C} T(s^{-1}B(P_k))\otimes C(\triangle^k)\otimes N_*(\triangle^{k}) \ar[rr]^-{\Gamma_T \otimes AW_*} \ar[u]^{\simeq} \ar[d]_{\simeq}&&T(s^{-1}\int_{[Finset, \mathcal{C}]}^{\underline{k}\in \triangle} B(P_k)\otimes C(\triangle^k) \otimes N_*(\triangle^k))  \ar[u]_{\simeq} \ar[d]^{\simeq} \\
							\int^{\underline{k}\in \triangle}_\mathcal{C} T(s^{-1}B(P_k))\otimes N_*(\triangle^{k})  \ar[rr]^-{\widetilde{AW_*}}\ar@{}[d]|*[@]{\cong}_{\alpha}&&T(s^{-1}\int_{[Finset, \mathcal{C}]}^{\underline{k}\in \triangle} B(P_k)\otimes N_*(\triangle^k))\ar@{}[d]|*[@]{=}^-{T(s^{-1}\alpha)}\\
							N_*T(s^{-1}B(P_\bullet)) \ar[rr]^{AW_*}_{\simeq} && T(s^{-1}N_*B(P_\bullet))
						}$
					\end{center}
					where, \begin{enumerate}
						\item Given any simplicial chain complex $K_\bullet,$  $\alpha: \int^{\underline{n}\in \triangle}_\mathcal{C} K_n\otimes N_*(\triangle^{n})  \longrightarrow N_*K_\bullet$  is the chain complex defined by: $\alpha( [x\otimes \sigma ] ):= [  \sigma^{*} x],$
						where $ \sigma ^{*}: K_{n}\longrightarrow K_{*}$ is induced by $\sigma: \underline{*} \longrightarrow \underline{n}.$ 
						The inverse of $\alpha$ is the morphism $\alpha': [x]\longmapsto [x\otimes \iota],$ where $\iota$ denotes the top cell of $\triangle_n.$
						\item $\widetilde{AW_*}$  is the iteration (relative to the number of the vertices of the tree $T$) of the  following map: 
						given any two simplicial chain complexes $K_\bullet$ and $L_\bullet,$ one have 
						\begin{align*}
						\widetilde{AW}:  \int^{\underline{n} \in \triangle} K_n \otimes L_n \otimes N_*\triangle ^{n}& \longrightarrow \underset{k+l=n}{\oplus} \int^{\underline{k} \in \triangle} K_k \otimes N_*\triangle^{k}  \otimes \int^{\underline{l} \in \triangle} L_l \otimes N_*\triangle^{l} \\
						[x\otimes y\otimes \iota_n] & \longmapsto [ \iota(0 ...k)^{*}x\otimes \iota_k] \otimes [\iota(k ...n)^{*}y\otimes \iota_l] 
						\end{align*}
						where  $\iota_j$ is the top cell of $\triangle^{j};$ $\iota_n(0...k): \underline{k} \longrightarrow \underline{n}$ and $  \iota_n(k ...n): \underline{l} \longrightarrow \underline{n}$ are the canonical monotone injections  defined by: $\iota_n(0...k)(i)=i, \iota_n(k...n)(i)=k+i.$

						\item $AW_*: N_*T(s^{-1}B(P_\bullet))\overset{\simeq}{\longrightarrow} T(s^{-1}N_*B(P_\bullet))$ is the iteration (relative to the number of the vertices of the tree $T$) of the  Alexander Whitney  map $AW$ defined as follows: 
						given any two simplicial chain complexes $K_\bullet$ and $L_\bullet,$  $a\in K_n,$ and $b\in L_n,$ one have
					\begin{align*}
					AW_*:  N_*(K_\bullet \otimes L_\bullet)& \longrightarrow N_*(K_\bullet)\otimes N_*(L_\bullet)  \\
					AW(a\otimes b):= &\underset{p+q=n}{\Sigma} d^p(a)\otimes d^q_0(b)
					\end{align*}
					 where 
					the "front face" $d^p: K_n \longrightarrow K_p$ and the "back face" $d^q_0: L_n \longrightarrow L_q$ are induced by the injective monotone maps $\delta^p: \underline{p}\longrightarrow \underline{p+q}$ and $\delta^q_0: \underline{q}\longrightarrow \underline{p+q}$ defined by:
					$\delta^p(i):=i$ and $\delta^q_0(i):=p+i.$
				
						This is the same map though defined with a different notation in \cite[Chap VIII, Corollary  8.6.]{maclane}, and is known to  be a quasi-isomorphism (see \cite[$Â§$ 2.3.]{SS03}).
						
						\item The vertical weak equivalences in each column are due to the fact that the terms in the integral are all good model for simplicial framing.
					\end{enumerate}
					
					From this diagram, since the morphism  $AW_*$ at the bottom is a quasi-isomorphism, it follows inductively that  $\Gamma_T$ is a quasi-isomorphism.
					
				\end{Proof of Theorem Equivalence between realization functors}

				\subsection{Simplicial resolution of operads}
					Let $(\mathcal{C}, \wedge, \mathbb{I}_\mathcal{C})$ be a cofibrantly generated symmetric monoidal model category  equipped with colimits with cofibrant unit $\mathbb{I}_\mathcal{C}.$ In this section we associate to any operad $P$ a simplicial operad $Res_\bullet(P) $ so that the realization functor ( see Definition \ref{Realization functor} )
	applied to this object gives an operad equivalent to $P.$ The section is very short, we first make the construction of $Res_\bullet(P) $ and state its relation with $P$ in  the only proposition of the section.

				We consider the following adjoint functors: \begin{center}
					$ F : [Finset, \mathcal{C}] \rightleftarrows Op_\mathcal{C}: U $
				\end{center}
				where $F$ denotes the free operad functor and $U$ is the forgethful functor. This adjunction gives the comonad $T:=FU: Op_\mathcal{C} \longrightarrow Op_\mathcal{C}.$
				
				Let $P$ be an operad in $\mathcal{C}.$ We define the  simplicial operad $Res_\bullet(P)$ associated to the comonad $T$ and the operad $P$ as follows:
				\begin{center}
					For any integer $k,$  $Res_k(P):= T^{k+1}(P)$
				\end{center}
				The counit $\varepsilon: T\longrightarrow 1$ of the comonad $T$ is used in the classical way to construct the faces  $d_i: Res_{k}(P) \longrightarrow Res_{k-1}(P), 0 \leq i\leq k.$ The degeneracies $s_j: Res_k(P) \longrightarrow Res_{k+1}(P), 0 \leq j\leq k, $ are induced by the comonadic coproduct $\mu: T\longrightarrow T^2.$ 
				
				\begin{remark}\label{Augmentation_Simplicial_operad}
					The simplicial operad $Res_\bullet(P)$ has a natural augmentation $Res_0(P)= T(P)\overset{\varepsilon}{\longrightarrow} P.$
					
					The associated augmented simplicial  chain complex sequence $\varepsilon: URes_\bullet(P) \overset{ }{\longrightarrow} UP$ has extra degeneracies $s_{-1}: URes_k (P)\longrightarrow URes_{k+1} (P),$ $(\forall k \geq -1)$ given by:
					\begin{center}
						$\xymatrix{ UP \ar[rr]^-{s_{-1}=\eta_{UP}} & &  UFU(P)\ar[rrr]^-{s_{-1}=\eta_{URes_0 P}}\ar@{}[d]|*[@]{=}& & &   UFUFU(P)\ar[rr]^{s_{-1}}\ar@{}[d]|*[@]{=} & & ...\\
							& &  URes_0(P) & & &  URes_1 (P)
						} $
					\end{center}
					
					A straight consequence of these extra degeneracies when $\mathcal{C}=Ch_{\geq t}(\Bbbk)$ is that one have the quasi-isomorphisms : $\forall r\geq 0, $ 
					\begin{center}
						$ \varepsilon: \arrowvert Res_\bullet (P)(r)\arrowvert _{\mathcal{C}} \overset{\simeq}{\longrightarrow } P(r)$
					\end{center}
				\end{remark}

				\begin{proposition}\label{Simplicial_resolution_Operads}
					Let us consider the category $\mathcal{C}=(Ch_{\geq t}(\Bbbk), \otimes, \Bbbk).$ If $P$ is a cofibrant operad on $\mathcal{C},$ then the augmentation   $\varepsilon: \arrowvert Res_\bullet (P)\arrowvert_{Op_\mathcal{C}} \overset{}{\longrightarrow }P$ is a weak equivalence.
				\end{proposition}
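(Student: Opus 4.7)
The plan is to prove the statement arity-by-arity, by reducing to a quasi-isomorphism in chain complexes. The main ingredients are Theorem \ref{Equivalence_between_realization_functors} (which compares the two realization functors) and the extra-degeneracy observation of Remark \ref{Augmentation_Simplicial_operad}.

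First I would apply Theorem \ref{Equivalence_between_realization_functors} to the simplicial operad $Res_\bullet(P)$: for each arity $r\geq 0$, this gives a quasi-isomorphism
\[ \Gamma\colon \arrowvert B^cB(Res_\bullet(P))(\underline{r})\arrowvert_\mathcal{C} \overset{\simeq}{\longrightarrow} \arrowvert Res_\bullet(P)\arrowvert_{Op_\mathcal{C}}(\underline{r}). \]
Next, since the field $\Bbbk$ has characteristic $0$, every operad is $\Sigma_*$-cofibrant, hence the bar-cobar counit provides a levelwise quasi-isomorphism $B^cB(Res_k(P))\overset{\simeq}{\to} Res_k(P)$. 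Because $|-|_\mathcal{C}$ preserves levelwise quasi-isomorphisms (using a spectral sequence on the skeletal filtration of the simplicial chain complex), we obtain
\[ \arrowvert B^cB(Res_\bullet(P))(\underline{r})\arrowvert_\mathcal{C} \overset{\simeq}{\longrightarrow} \arrowvert Res_\bullet(P)(\underline{r})\arrowvert_\mathcal{C}. \]
Finally, Remark \ref{Augmentation_Simplicial_operad} tells us that the augmented simplicial chain complex $URes_\bullet(P)(\underline{r})\to UP(\underline{r})$ admits extra degeneracies, so the induced map
\[ \varepsilon\colon \arrowvert Res_\bullet(P)(\underline{r})\arrowvert_\mathcal{C} \overset{\simeq}{\longrightarrow} P(\underline{r}) \]
is already a quasi-isomorphism.

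Assembling these three morphisms into a zig-zag of quasi-isomorphisms and checking that the induced composite is compatible with the operadic augmentation $\varepsilon\colon \arrowvert Res_\bullet(P)\arrowvert_{Op_\mathcal{C}}\to P$ at each arity yields the conclusion: since weak equivalences in the semi-model category $Op_\mathcal{C}$ are defined arity-wise, it follows that $\varepsilon$ is a weak equivalence of operads.

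The main obstacle I expect is the compatibility check at the end: one must verify that the zig-zag of comparison maps actually assembles into the augmentation $\varepsilon$ of the statement, and not merely into some abstract quasi-isomorphism between its source and target. This amounts to tracing the naturality of $\Gamma$ (defined treewise in the proof of Theorem \ref{Equivalence_between_realization_functors}), of the bar-cobar counit, and of the extra-degeneracy retraction, and confirming that all three are compatible with the simplicial augmentation $Res_\bullet(P)\to P$. The cofibrancy hypothesis on $P$ enters to ensure that $Res_\bullet(P)$ is a Reedy cofibrant simplicial object in $Op_\mathcal{C}$, so that its operadic realization genuinely computes the homotopy type and the comparison morphisms all lie in the correct homotopical setting.
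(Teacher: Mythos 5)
Your proposal is correct and follows essentially the same route as the paper's proof: the paper forms exactly the same zig-zag for each arity $r$ --- the quasi-isomorphism $\arrowvert B^cB(Res_\bullet(P))(r)\arrowvert_\mathcal{C} \overset{\simeq}{\to} \arrowvert Res_\bullet(P)\arrowvert_{Op_\mathcal{C}}(r)$ from Theorem \ref{Equivalence_between_realization_functors}, the comparison $\arrowvert B^cB(Res_\bullet(P))(r)\arrowvert_\mathcal{C} \overset{\simeq}{\to} \arrowvert Res_\bullet(P)(r)\arrowvert_\mathcal{C}$, and the extra-degeneracy quasi-isomorphism $\arrowvert Res_\bullet(P)(r)\arrowvert_\mathcal{C} \overset{\simeq}{\to} P(r)$ of Remark \ref{Augmentation_Simplicial_operad} --- and concludes by the 2-out-of-3 property, just as you do. The only minor difference is in justifying the middle comparison: the paper invokes Reedy cofibrancy of $Res_\bullet(P)$ and $B^cB(Res_\bullet(P))$ together with \cite[Thm 3.3.7]{BF17}, whereas you use the levelwise bar-cobar counit plus a skeletal spectral-sequence argument; both are valid justifications of the same step.
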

				\begin{proof}
					Let us consider the following commutative diagram: $\forall r\geq 0,$
					\begin{center}
						$\xymatrix{ \arrowvert B^c B(Res_\bullet (P))(r)\arrowvert_{\mathcal{C}} \ar[rrr]^{\simeq}_-{(1)} \ar[rd]^{\simeq}_-{(2)}  & & & \arrowvert Res_\bullet (P)\arrowvert_{Op_\mathcal{C}}(r)\ar[ldd]^{(4)}\\
							& \arrowvert Res_\bullet (P)(r)\arrowvert_{\mathcal{C}}\ar[rd]^-{(3)}_{\simeq}\\
							&& P(r)
						}$
					\end{center}
					where 
					\begin{enumerate}
						\item [-] The quasi-isomorphism $(1)$ is induced by Theorem \ref{Equivalence_between_realization_functors};
						\item[-] The quasi-isomorphism $(2)$ is induced by the following fact:
						
						$P$ is cofibrant, therefore $Res_\bullet (P)$ and $B^cB(Res_\bullet (P) )$ are Reedy cofibrant. We conclude using \cite[Thm 3.3.7.]{BF17} that $(2)$ is a quasi-isomorphism.
						\item[-] $(3)$ is induced by Remark \ref{Augmentation_Simplicial_operad}.
					\end{enumerate}
					We then deduce by the $2$-out of $3$ property of weak equivalence that $(4)$ is a quasi-isomorphism.
					
				\end{proof}

				\section{Equivalence of operads on Quillen equivalent categories}
				This section can be seen as an application of the whole paper since we combine  here all the previous results. Before we state the main theorem, we  need 
to remind the construction of the left adjoint functor, that arises from a weak monoidal Quillen pair, at the level of operads. 

				Let $\lambda: \mathcal{C}=Ch_{\geq t}(\Bbbk) \rightleftarrows \mathcal{D}: R$ be a Quillen pair between the category of chain complexes $Ch_{\geq t}(\Bbbk)$ $( t\in \mathbb{N}\cup\{-\infty\})$  with its classical projective model structure, and any model category $\mathcal{D}.$
				 If in addition the category $\mathcal{D}$ is monoidal and the pair $(\lambda, R)$ is a weak symmetric monoidal Quillen pair, then the functor $R$ extends naturally to a functor $\overline{R}: Op_\mathcal{D}\longrightarrow Op_\mathcal{C}$ given by: 
				\begin{center}
					$\forall k$ and $Q\in Op_\mathcal{D},$ $ \overline{R}(Q)(k):= R(Q(k)).$
				\end{center}
				It is proved in \cite[Prop 3.1.5.-(a) ]{fresse2017homotopy} that the functor $\overline{R}$ has a left adjoint 	$L: Op_\mathcal{C}\longrightarrow Op_\mathcal{D}$
				given by:
				\begin{enumerate}
					\item [(a)] If $P=F(M)$ is a free operad in chain complexes generated by a symmetric sequence $M,$ then
					\begin{center}
						$L(F(M)):=F(\overline{\lambda}(M)),$
					\end{center} 
					where $\overline{\lambda}: [Finset,\mathcal{C} ]\longrightarrow [Finset,\mathcal{D} ]$ is the aritywise left composition with $\lambda;$
					\item[(b)] If $P$ is any operad on chain complexes, then one make the following identification: \begin{center}
						\begin{tikzcd}
							P \cong coeq(FU FU (P) \arrow[r, shift left,  yshift=0.7ex, "d_0"]
							\arrow[r, shift right,  yshift=-0.7ex, "d_1"]
							& FU (P))
						\end{tikzcd}
					\end{center}
					where \begin{enumerate}
						\item [-] $U: Op_\mathcal{C}\longrightarrow [Finset, \mathcal{C}]$ is the forgetful functor;
						\item[-] $d_0: FU FU (P)\longrightarrow FU (P)$ is the morphism of operads adjoint of the identity morphism of symmetric sequences  $Id: UFU(P)\longrightarrow UFU(P);$
						\item [-] $d_1=FU(\varepsilon): FU FU (P)\longrightarrow FU (P),$ with $\varepsilon: FU(P)\longrightarrow P$ is  the morphism of operads adjoint to the identity of symmetric sequences $Id: U(P)\longrightarrow U(P).$
					\end{enumerate}
					We now set \begin{center}
						\begin{tikzcd}
							L(P) := coeq( \underset{   =  L(FUFU(P) )}{\underbrace{F (\overline{\lambda} (UFU (P)))}} \arrow[r, shift left,  yshift=0.7ex]
							\arrow[r, shift right,  yshift=-0.7ex]
							& \underset{ =L(FU(P))  }{\underbrace{F (\overline{\lambda}(U (P)))} )}
						\end{tikzcd}
					\end{center}
				\end{enumerate}
				
				Using this notation and construction, we state the following theorem which is the main result of this section:
				\begin{theorem}\label{Quillen_equivalence_operads}
					Let $(\mathcal{D}, \wedge, \mathbb{I}_\mathcal{D})$ be a cofibrantly generated symmetric monoidal model category with cofibrant unit $\mathbb{I}_\mathcal{D},$ and let  
					$\lambda: \mathcal{C}=Ch_{\geq t}(\Bbbk)\rightleftarrows \mathcal{D}: R$ be a weak monoidal Quillen pair.  If the pair $(\lambda, R)$ is a Quillen equivalence, then the pair $L: Op_\mathcal{C} \leftrightarrows Op_\mathcal{D}: \overline{R}$ is a Quillen equivalence between the semi-model categories  $Op_\mathcal{C}$ and $Op_\mathcal{D}.$
					
					In addition, if the semi-model structure on $Op_\mathcal{D}$ is a strict model structure, then the pair $(L,\overline{R})$ is a strict Quillen equivalence.
				\end{theorem}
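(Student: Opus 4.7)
The plan is to verify, in order, that $(L,\overline{R})$ is a Quillen pair and then that the two equivalence conditions of the paper's definition of Quillen equivalence between semi-model categories are satisfied. The core technique is to apply the forgetful functor $U$ to reduce every question to the level of symmetric sequences, where Proposition~\ref{Relation_betwen_left_adjoint_functors} supplies the comparison $\overline{\lambda}UP \simeq UL(P)$ on cofibrant operads and Proposition~\ref{lift_adjonction_symmetric_sequence} supplies the Quillen equivalence of $(\overline{\lambda},\overline{R})$ on symmetric sequences. The Quillen pair property is immediate: fibrations and trivial fibrations in both $Op_\mathcal{C}$ and $Op_\mathcal{D}$ are detected arity-wise and $R$ is right Quillen, so $\overline{R}$ preserves fibrations and trivial fibrations.

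For the unit condition, let $P \in Op_\mathcal{C}$ be cofibrant and $L(P)\to L(P)^f$ a fibrant replacement in $Op_\mathcal{D}$. I must show $P \to \overline{R}L(P)^f$ is a weak equivalence in $Op_\mathcal{C}$, that is, arity-wise in $\mathcal{C}$. Applying $U$, the target becomes $\overline{R}UL(P)^f$; the source $UP$ is cofibrant in $[FinSet,\mathcal{C}]$, since $P$ is $\Sigma_*$-cofibrant, and $UL(P)^f$ is fibrant in $[FinSet,\mathcal{D}]$, since $L(P)^f$ is arity-wise fibrant. By the Quillen equivalence of symmetric sequences (Proposition~\ref{lift_adjonction_symmetric_sequence}), the map $UP \to \overline{R}UL(P)^f$ is a weak equivalence if and only if its adjoint $\overline{\lambda}UP \to UL(P)^f$ is one. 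The latter factors as $\overline{\lambda}UP \to UL(P) \to UL(P)^f$, where the first map is a weak equivalence by Proposition~\ref{Relation_betwen_left_adjoint_functors} and the second by construction of the fibrant replacement, so the composite is a weak equivalence.

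For the counit condition, let $Q \in Op_\mathcal{D}$ be cofibrant and $j:\overline{R}(Q)^c\to \overline{R}(Q)$ a cofibrant replacement in $Op_\mathcal{C}$; I must show $L(\overline{R}(Q)^c)\to Q$ is a weak equivalence arity-wise in $\mathcal{D}$. Since $\overline{R}(Q)^c$ is cofibrant, Proposition~\ref{Relation_betwen_left_adjoint_functors} gives a weak equivalence $\overline{\lambda}U\overline{R}(Q)^c \to UL(\overline{R}(Q)^c)$, so by 2-out-of-3 it suffices to show the composite $\overline{\lambda}U\overline{R}(Q)^c \to UL(\overline{R}(Q)^c) \to UQ$ is arity-wise a weak equivalence. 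Using the triangle identities of both adjunctions $(L,\overline{R})$ and $(\overline{\lambda},\overline{R})$ together with the arity-wise identity $U\overline{R} = \overline{R}U$, one checks that at arity $r$ this composite equals $\lambda(\overline{R}(Q)^c(r)) \to \lambda(R(Q(r))) \to Q(r)$, namely $\lambda$ applied to the cofibrant replacement $\overline{R}(Q)^c(r)\to R(Q(r))$ followed by the $(\lambda,R)$-counit at $Q(r)$. Since $Q$ being cofibrant forces each $Q(r)$ to be cofibrant in $\mathcal{D}$ and $\overline{R}(Q)^c(r)$ to be a cofibrant replacement of $R(Q(r))$ in $\mathcal{C}$, this is exactly the map featured in condition (b) of the Quillen equivalence $(\lambda,R)$ applied to the cofibrant object $Q(r)$, hence a weak equivalence by hypothesis. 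The strict-model upgrade is automatic: the preceding argument invokes no semi-model-specific feature beyond the two equivalence conditions, so if $Op_\mathcal{D}$ is a strict model category the induced Quillen equivalence is strict. The main obstacle is the arity-wise identification in the last display of the operadic counit composite with the $(\lambda,R)$-counit, which is a naturality chase through the triangle identities of the two adjunctions.
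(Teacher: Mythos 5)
Your overall architecture --- checking the Quillen-pair property arity-wise, then reducing both equivalence conditions to symmetric sequences via $U$, using Proposition~\ref{Relation_betwen_left_adjoint_functors} for the comparison $\overline{\lambda}UP \to UL(P)$ and Proposition~\ref{lift_adjonction_symmetric_sequence} for the equivalence $(\overline{\lambda},\overline{R})$ --- is exactly the paper's, and your unit-condition argument is essentially identical to the paper's part 2(a). The genuine gap is in the counit condition. You verify it for a \emph{cofibrant} operad $Q \in Op_\mathcal{D}$ (taking the paper's stated definition at face value) and then close the argument by invoking ``condition (b) of the Quillen equivalence $(\lambda,R)$ applied to the cofibrant object $Q(r)$.'' That appeal is invalid: for an ordinary Quillen equivalence the derived counit $\lambda(R(Y)^c)\to\lambda(R(Y))\to Y$ is only guaranteed to be a weak equivalence when $Y$ is \emph{fibrant}, since a right Quillen functor $R$ has no homotopical control on non-fibrant objects. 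Equivalently, your composite $\lambda(\overline{R}(Q)^c(r)) \to Q(r)$ is the $(\lambda,R)$-adjoint of the weak equivalence $\overline{R}(Q)^c(r)\to R(Q(r))$, and the ``adjoint of a weak equivalence is a weak equivalence'' criterion needs a cofibrant source \emph{and a fibrant target}; cofibrancy of $Q(r)$ buys you nothing here. The word ``cofibrant'' in clause (b) of the paper's definition of Quillen equivalence between semi-model categories is evidently a typo: the paper's own proof of Theorem~\ref{Quillen_equivalence_operads} begins its counit step with ``Let $Q$ be a \emph{fibrant} operad on $\mathcal{D}$,'' and then runs your kind of argument one level up: $U\overline{R}(Q)^c$ is cofibrant, $UQ$ is fibrant (fibrations in $Op_\mathcal{D}$ are arity-wise), so the adjoint $\overline{\lambda}U\overline{R}(Q)^c \to UQ$ of the weak equivalence $U\overline{R}(Q)^c \to \overline{R}UQ$ is a weak equivalence by the Quillen equivalence $(\overline{\lambda},\overline{R})$, and 2-out-of-3 together with Proposition~\ref{Relation_betwen_left_adjoint_functors} finishes. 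Your proof is repaired the same way: take $Q$ fibrant, so that each $Q(r)$ is fibrant, and your arity-wise appeal to $(\lambda,R)$ becomes legitimate.

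Two smaller remarks. First, the step you flagged as the main obstacle --- identifying the composite $\overline{\lambda}U\overline{R}(Q)^c \to UL(\overline{R}(Q)^c) \to UQ$ arity-wise with the $(\lambda,R)$-counit composite --- is in fact correct and unproblematic; the paper uses the same compatibility (that $\overline{\lambda}\,\overline{R}(UQ) \to UL\overline{R}(Q) \to UQ$ is the counit of $(\overline{\lambda},\overline{R})$) in its part 3. Second, your claim that the strict-model upgrade is ``automatic'' is reasonable in principle (once both structures are genuine model categories, the derived unit/counit conditions at cofibrant/fibrant objects are exactly the standard characterization of a Quillen equivalence), but as written it inherits the fibrancy gap above; the paper instead gives a short separate verification that for cofibrant $P$ and fibrant $Q$ a map $L(P)\to Q$ is a weak equivalence if and only if its adjoint $P \to \overline{R}(Q)$ is, again via Proposition~\ref{Relation_betwen_left_adjoint_functors} and the equivalence $(\overline{\lambda},\overline{R})$.
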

				
				To pove this theorem, we will need the result of the following proposition.
				\begin{proposition}\label{Relation_betwen_left_adjoint_functors}
					Let $(\mathcal{D}, \wedge, \mathbb{I}_\mathcal{D})$ be a cofibrantly generated symmetric monoidal model category, and let $\lambda: \mathcal{C}=Ch_{\geq t}(\Bbbk)\rightleftarrows \mathcal{D}: R$ be a weak monoidal Quillen pair.
					If $P$ is a cofibrant operad in $Ch_{\geq t}(\Bbbk),$ then the morphism $\overline{\lambda}(U(P))\longrightarrow UL(P)$, which is adjoint to the  unit $\eta: P \longrightarrow \overline{R}L(P),$ 
					is a weak equivalence.
				\end{proposition}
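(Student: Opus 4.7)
The plan is to reduce the statement to the free-operad case by means of the simplicial resolution $Res_\bullet(P) = T^{\bullet+1}(P)$ constructed in the preceding subsection, and then to pass from the level-wise result to the general statement by commuting both $\overline{\lambda}$ and $L$ with the realization. Since $P$ is cofibrant, Proposition \ref{Simplicial_resolution_Operads} provides a weak equivalence $|Res_\bullet(P)|_{Op_\mathcal{C}} \xrightarrow{\simeq} P$, and since $L$ is a left adjoint it preserves this coend, so $L(P) \simeq |L(Res_\bullet(P))|_{Op_\mathcal{D}}$. The key structural observation is that each level $Res_k(P) = F(U T^k(P))$ is a \emph{free} operad, hence $L(Res_k(P)) = F_\mathcal{D}(\overline{\lambda}(U T^k P))$ is free as well.

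First I would establish the proposition for a free operad $P = F(M)$ on a (necessarily cofibrant, since $\Bbbk$ is a field) symmetric sequence $M$. The underlying object $UF(M)(r)$ decomposes as a direct sum, indexed by isomorphism classes of rooted trees with $r$ labelled leaves, of tensor products $T(M) = \bigotimes_{v\in V(T)} M(I_v)$, and similarly for $UF(\overline{\lambda} M) = UL(F(M))$. Since $\overline{\lambda}$ commutes with colimits (and, in characteristic zero, with symmetric-group coinvariants), the comparison map $\overline{\lambda}(UF(M)) \to UL(F(M))$ is assembled tree-by-tree from iterates of the comonoidal transformation $\widetilde{\varphi}: \overline{\lambda}(-\otimes -) \to \overline{\lambda}(-) \wedge \overline{\lambda}(-)$. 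By the weak monoidality of $(\lambda, R)$, each $\widetilde{\varphi}$ is a weak equivalence on cofibrant inputs, and iterating gives the free case. Applied to each level of $Res_\bullet(P)$ this produces a level-wise weak equivalence $\overline{\lambda}(U Res_\bullet P) \xrightarrow{\simeq} UL(Res_\bullet P)$ of Reedy cofibrant simplicial symmetric sequences in $\mathcal{D}$. I would then chase the aritywise realization in $\mathcal{C}$ through $\lambda$, using the weak equivalences
\[
\overline{\lambda}(UP)(r) \xleftarrow{\simeq} \lambda\bigl(|Res_\bullet P|_{Op_\mathcal{C}}(r)\bigr) \xleftarrow{\simeq} \lambda\bigl(|Res_\bullet P(r)|_\mathcal{C}\bigr) \cong \bigl|\lambda(Res_\bullet P(r))\bigr|_\mathcal{D},
\]
where the middle zig-zag is Theorem \ref{Equivalence_between_realization_functors} on the $\mathcal{C}$ side and the last identification uses that $\lambda$ preserves all colimits. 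Combined with the level-wise equivalence from the free case, this reduces the problem to comparing $\bigl|UL(Res_\bullet P)\bigr|_\mathcal{D}$ aritywise with $U\bigl|L(Res_\bullet P)\bigr|_{Op_\mathcal{D}} = UL(P)$.

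The main obstacle is precisely this last comparison: it is a $\mathcal{D}$-side analogue of Theorem \ref{Equivalence_between_realization_functors}, and in $\mathcal{D}$ the cofree-coalgebra machinery of Section 1 is not available. The way around this is to exploit once more that $L(Res_k(P)) = F_\mathcal{D}(\overline{\lambda}(UT^k P))$ is free: for free operads on a Reedy cofibrant simplicial symmetric sequence, the forgetful functor commutes with realization up to weak equivalence, because $UF_\mathcal{D}$ is a direct sum of iterated tensor products indexed by trees and realization is monoidal up to weak equivalence on cofibrant diagrams in a symmetric monoidal model category. The requisite Reedy cofibrancy of $\overline{\lambda}(U T^\bullet P)$ follows from the Reedy cofibrancy of $U T^\bullet P$ (granted because $P$ is cofibrant) and from $\overline{\lambda}$ being a left Quillen functor on symmetric sequences by Proposition \ref{lift_adjonction_symmetric_sequence}. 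Piecing these equivalences together yields the desired weak equivalence $\overline{\lambda}(UP) \xrightarrow{\simeq} UL(P)$.
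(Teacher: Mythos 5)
Your proposal is correct, and its skeleton is the same as the paper's: resolve $P$ by the levelwise-free simplicial operad $Res_\bullet(P)$, use Proposition \ref{Simplicial_resolution_Operads} together with the fact that $\lambda$ and $L$ commute with realizations to reduce everything to a levelwise statement, and prove that levelwise statement for free operads by iterating the weak-monoidal comparison maps over trees (the paper compresses this last point into the remark that the bottom map of its diagram is ``built out of the lax monoidal morphisms $\lambda(V\otimes W)\overset{\simeq}{\longrightarrow}\lambda(V)\wedge\lambda(W)$''; your explicit tree-by-tree free case is the same argument). The genuine divergence is at the step you call the main obstacle: comparing the aritywise realization $\arrowvert UL(Res_\bullet P)\arrowvert_{\mathcal{D}}$ with $U\arrowvert L(Res_\bullet P)\arrowvert_{Op_\mathcal{D}}$. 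The paper dispatches this (the weak equivalence labelled $(6)$ in its diagram) as ``a straight use of Theorem \ref{Equivalence_between_realization_functors}'', but that theorem is stated and proved only for simplicial operads over $Ch_{\geq t}(\Bbbk)$ --- its proof rests on the bar--cobar resolution and the cofree-coalgebra cosimplicial frames $C(\triangle^\bullet)$, neither of which exists over a general $\mathcal{D}$ --- so the citation does not literally apply there. Your substitute, exploiting that each level $L(Res_k P)=F_\mathcal{D}(\overline{\lambda}(UT^kP))$ is free, that $UF_\mathcal{D}$ is a tree-indexed coproduct of iterated tensor products, and that realization commutes with $\wedge$ up to weak equivalence on Reedy cofibrant objects, closes exactly the point where the paper is weakest, and is the more defensible route. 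What it costs you is that ``realization is monoidal up to weak equivalence'' in an arbitrary cofibrantly generated symmetric monoidal (semi-)model category is itself a lemma that must be supplied (pushout-product axiom, homotopy cofinality of the diagonal of bisimplicial objects, independence of realization from the choice of frames); that is standard machinery, whereas the paper's appeal to Theorem \ref{Equivalence_between_realization_functors} on the $\mathcal{D}$ side is unavailable as stated.
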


				\begin{proof}
					We form the following diagram which is commutative from the natural transformations  $|Res_\bullet (-)|_{Op_\mathcal{C}}\longrightarrow 1_{Op_\mathcal{C}}$ and $|-|_\mathcal{C} \longrightarrow U|-|_{Op_\mathcal{C}}:$  $ \forall r \geq 0,$
					\begin{center}
						$\xymatrix{ &\lambda(P(r)) \ar[r]& L(P)(r)\\
							&\lambda(|Res_\bullet (P)|_{Op_\mathcal{C}}(r))\ar[r] \ar[u]^{\simeq}_{(1)}& 	L(|Res_\bullet (P)|_{Op_\mathcal{C}})(r)\ar[u]^{\simeq}_{(2)} \ar[r]^{\cong}_{(3)}&	| L(Res_\bullet (P))|_{Op_\mathcal{C}}(r) \\
							| \lambda (Res_\bullet (P)(r))|_{\mathcal{C}} \ar[r]^{\cong}_{(5)}&	\lambda (|Res_\bullet (P)(r)|_{\mathcal{C}}) \ar[rr] \ar[u]^{(4)}_{\simeq}& & |L(Res_\bullet (P))(r)|_{\mathcal{C}}  \ar[u]^{(6)}_{\simeq}
						}
						$
					\end{center}
					where: \begin{enumerate}
						\item [-] The weak equivalence of $(1)$ is justified by the weak equivalence of Corollary \ref{Simplicial_resolution_Operads}, and the fact that the functor $\lambda$ is a left Quillen adjoint so preserves weak equivalences ( all chain complexes are cofibrant);
						\item[-] The weak equivalence of $(2)$ is justified by the following facts:  the functor $L$ is a left Quillen adjoint, thus preserves weak equivalence between cofibrant operads, and the operads $P$ an $|Res_\bullet (P)|_{Op_\mathcal{C}}$ are cofibrant , thus by applying $L$ to the weak equivalence of  Proposition \ref{Simplicial_resolution_Operads}, one obtain $(2);$
						\item[-]  The weak equivalences of $(4)$ and $(6)$ are a straight use of Theorem \ref{Equivalence_between_realization_functors};
						\item [-] The isomorphisms $(3)$ and $(5)$ come from then fact that the functors $L$ and $\lambda$  commute with colimits as left adjoint.
					\end{enumerate}	
					The horizontal map at the bottom is obtained literally by applying the functor $|-|_\mathcal{C}$ to the simplicial map
					\begin{center}
						$\lambda(( FU)^{\bullet +1}(P))(r)\longrightarrow L((FU)^{\bullet +1}(P))(r)=  F \overline{\lambda}(FU)^{\bullet +1}(P)(r)$
					\end{center}
					and this later map is a weak equivalence since it is built out of  the  lax monoidal morphisms
					$\forall V,W \in Ch^+_{\Bbbk},$ $\lambda(V\otimes W)\overset{\simeq}{\longrightarrow} \lambda(V)\wedge \lambda(W).$
					
					We therefore deduce from the bottom weak equivalence of the diagram that the middle and then the top maps are weak equivalences.
				\end{proof}
				
				\begin{Proof of main theorem}
					\begin{enumerate}
						\item It is obvious that $(L, \overline{R})$ is a Quillen adjoint pair since  $R$ preserves fibrations and trivial fibrations by hypothesis and $\overline{R}$ is aritywise determined by $R.$   Thus  $\overline{R}$ preserves fibrations and trivial fibrations.

						\item We now prove that the Quillen adjoint pair $(L, \overline{R})$  is a Quillen equivalence between the semi-model categories  $Op_\mathcal{C}$ and $Op_\mathcal{D}.$
						
						 \begin{enumerate}
							\item Let $P$ be a cofibrant operad on $\mathcal{C}$ and consider the fibrant resolution $L(P) \overset{\simeq}{\rightarrowtail} L(P)^{f}.$ 
							To prove that the composite 
							\begin{center}
								$P \longrightarrow \overline{R}L(P) \longrightarrow \overline{R}(L(P)^{f})$
							\end{center}
							is a weak equivalence, it is sufficient to prove that the following composite is a weak equivalence in $[FinSet, \mathcal{D}]$
						\begin{center}
							$\overline{\lambda}UP \longrightarrow UL(P) \longrightarrow U(L(P)^{f})$
						\end{center}	
						where $U: Op_{\mathcal{C}}\longrightarrow [FinSet, \mathcal{D}]$ is the forgetful functor.
				One can see that each of these two maps are weak equivalences for the following reasons: The first map of this composite is a weak equivalence according to Proposition \ref{Relation_betwen_left_adjoint_functors}. The second morphism is also a weak equivalence since in the semi-model structure defined on operads, the forgetful functor $U$ preserves weak equivalences. 
					
							\item  Let $Q$ be a fibrant operad on $\mathcal{D}$ and a cofibrant resolution $R(Q)^c \overset{\simeq}{\longrightarrow} R(Q).$
							
							To prove that 	\begin{center}
								$L(R(Q)^c) \longrightarrow L(R(Q)) \longrightarrow Q$
							\end{center}	is a weak equivalence in $Op_\mathcal{D},$							
							we consider the following commutative diagram
							
							\begin{center}
								$\xymatrix{ \overline{R}L(\overline{R}(Q)^c) \ar[r] & \overline{R}L\overline{R}(Q) \ar[r]& \overline{R}(Q)\\
									\overline{R}(Q)^c \ar[u]^{\eta} \ar[r]^{\simeq} & \overline{R}(Q)\ar[ru]_-{=} \ar[u]^{\eta} &
								}$
							\end{center}
							which is adjoint to the diagram in $[FinSet, \mathcal{D}]$

		\begin{center}
			$\xymatrix{ UL(\overline{R}(Q)^c) \ar[r] & UL\overline{R}(Q) \ar[r]& UQ\\
			\overline{\lambda}	(\overline{R}(Q)^c) \ar[u]^{\simeq} \ar[r]_{ }^{\beta} & 	\overline{\lambda}\overline{R}(Q)\ar[ru]_-{\alpha} \ar[u]^{ } &
			}$
		\end{center}	
		where the most right vertical weak equivalence is justified by Proposition \ref{Relation_betwen_left_adjoint_functors}.	
	Now, since the pair $(\overline{\lambda}, \overline{R})$	is a Quillen equivalence, one deduce that the composite $\alpha \beta$ is a weak equivalence. Therefore it follows from the 2-out-of-3 axiom for weak equivalences in $[FinSet, \mathcal{D}]$ that 
			\begin{center}
				$UL(R(Q)^c) \longrightarrow UL(R(Q)) \longrightarrow UQ$
			\end{center}
is a weak equivalence. 	The result then follows from the semi-model structure defined on operads.	

\end{enumerate}
\item We assume that the model structure on $Op_\mathcal{D}$ is a strict model structure and we want to prove that the pair $(L, \overline{R})$  is a strict Quillen equivalence.

	Let $P$ be a cofibrant operad on $\mathcal{C}=Ch_{\geq t}(\Bbbk), Q$ be a fibrant operad in $\mathcal{D},$ and consider a weak equivalence $\widetilde{f}:L(P) \overset{\simeq}{\longrightarrow} Q.$
  	
  	The adjoint of this map fits into the commutative diagram  	$\xymatrix{
  		P \ar[r]\ar[d]_{}  &\overline{R}(Q) \\
  		\overline{R}L(P) 	 \ar[ru]_{\overline{R}(\widetilde{f} )}& }$ 
  	
  	We aim to prove that the map $P \longrightarrow \overline{R}(Q)$ is a weak equivalence and  it will be enough in showing that the morphism $UP \longrightarrow U \overline{R}(Q)$ (where $U: Op_{\mathcal{C}}\longrightarrow [FinSet, \mathcal{C}]$ is the forgetful functor) is a weak equivalence. 
  	
  	The adjoint in $[FinSet, \mathcal{D}]$ of the diagram 
  	\begin{center}
  		$\xymatrix{
  			UP \ar[r]\ar[d]_{}  &U\overline{R}(Q) \\
  			U	\overline{R}L(C) 	 \ar[ru]_{\overline{R}(\widetilde{f} )}& }$ 
  	\end{center}
  	gives the diagram 
  	\begin{center}
  		$\xymatrix{
  			\overline{\lambda}U(P) \ar[r]^{(2)}\ar[d]_{(1)}  &UQ \\
  			UL(P) 	 \ar[ru]_{U\widetilde{f} }^{\simeq}& }$ 
  	\end{center}
  	We proved in Proposition \ref{Relation_betwen_left_adjoint_functors} that $(1)$ is a weak equivalence, therefore $(2)$ is a weak equivalence between the fibrant $UQ$ ( in $[FinSet, \mathcal{D}]$) and the cofibrant $UP$ ( in $[FinSet, \mathcal{C}]$) object. We now use the fact that the adjoint pair $(	\overline{\lambda}, 	\overline{R})$ is a Quillen equivalence (Proposition \ref{lift_adjonction_symmetric_sequence}) to deduce that $UP \longrightarrow U\overline{R}(Q)$ is a weak equivalence.
  	
  	Conversely, consider a quasi-isomorphism $g: P \overset{\simeq}{\longrightarrow} \overline{R}(Q).$ We want to prove that $\widetilde{g}: L(P)\overset{\simeq}{\longrightarrow} Q,$ and it will be sufficient to prove that  $\widetilde{g}: UL(P)\overset{\simeq}{\longrightarrow} UQ.$  Let us consider the commutative diagram
  	\begin{center}
  		$\xymatrix{\overline{\lambda}(UP)\ar[r]^{\alpha_0}_{\simeq}\ar[d]^{\overline{\lambda}(g)}&UL(P)\ar[d]^{\gamma_0} \ar[rd]^{\widetilde{g}}\\
  			\overline{\lambda}\overline{R}Q \ar[r]^{\alpha_1}& UL\overline{R}Q \ar[r]^{\gamma_1}& UQ 
  		}$
  	\end{center}
  	\begin{enumerate}
  		\item [-] $\alpha_0$ is a quasi-isomorphism according to Proposition \ref{Relation_betwen_left_adjoint_functors};
  		\item[-] $\gamma_1 \circ \alpha_1: \overline{\lambda} \overline{R} \longrightarrow UQ$ is the unit of the adjunction $(\overline{\lambda},  \overline{R}),$ therefore $\gamma_1  \alpha_1 \overline{\lambda}(g): \overline{\lambda}(UP) \longrightarrow UQ$ is the adjoint of $g.$ We then deduce that $ \gamma_1  \alpha_1 \overline{\lambda}(g)=\widetilde{g}\alpha_0 $ is a weak equivalence, since $(\overline{\lambda},  \overline{R})$ is a Quillen equivalence.
  		
  		Therefore from the $2$ out of $3$ property, one deduce that $\widetilde{g}$ is a weak equivalence.
  	\end{enumerate}

\end{enumerate}
					
\end{Proof of main theorem}

				\bibliographystyle{alpha}
				\bibliography{mybibliography}

			\end{document}